\theoremstyle{plain}
\newtheorem{thm}{Theorem}[section]
\newtheorem{lem}[thm]{Lemma}
\newtheorem{prop}[thm]{Proposition}
\newtheorem{cor}[thm]{Corollary}
\newtheorem*{claim*}{Claim}
\newtheorem{e-definition}[thm]{Definition\rm}
\newtheorem{ques}[thm]{Question}
\theoremstyle{definition}
\newtheorem{example}[thm]{Example}
\newtheorem{defn}[thm]{Definition}
\newtheorem*{rem*}{\it Remark\/}
\newcommand{\FF}{\mathbf{F}}
\newcommand{\ZZ}{{\mathbf{Z}}}
\newcommand{\NN}{{\mathbf{N}}}
\newcommand{\RR}{{\mathbf{R}}}
\newcommand{\gd}{\delta}
\newcommand{\gO}{\Omega}
\newcommand{\gep}{\varepsilon}
\newcommand{\vareps}{\varepsilon}
\newcommand{\LF}{\mathrm{Rad_{LE}}}
\newcommand{\LFp}{\mathrm{Rad}_p}
\newcommand{\tdlc}{t.d.l.c.\xspace}
\newcommand{\ti}[1]{\tilde{#1}}
\newcommand{\vol}{\mathrm{vol}}
\newcommand{\SL}{\mathrm{SL}}
\newcommand{\GL}{\mathrm{GL}}
\newcommand{\Aut}{\mathrm{Aut}}
\newcommand{\norma}{\mathscr{N}}
\newcommand{\inv}{^{-1}}
\newcommand{\Ker}{\mathrm{Ker}}
\def\og{\leavevmode\raise.3ex\hbox{$\scriptscriptstyle\langle\!\langle$~}}
\def\fg{\leavevmode\raise.3ex\hbox{~$\!\scriptscriptstyle\,\rangle\!\rangle$}}
\begin{document}

\title{Lattices in amenable groups}

\author[1]{Uri Bader\thanks{Supported in part by the ISF-Moked grant 2095/15 and the ERC
grant 306706.}}

\author[2]{Pierre-Emmanuel Caprace\thanks{F.R.S.-FNRS research associate, supported in part by the ERC grant 278469}}
\author[1]{Tsachik Gelander\thanks{Supported in part by the ISF-Moked grant 2095/15.}}

\author[3]{Shahar Mozes\thanks{Supported in part by the ISF-Moked grant 2095/15.}}

\affil[1]{Weizmann Institute, Rehovot}
\affil[2]{Universit\'e catholique de Louvain}
\affil[3]{Hebrew University of Jerusalem}

\date{Dedicated to the memory of George Daniel Mostow with friendship and admiration}

\selectlanguage{english}
\maketitle

\baselineskip=17pt

\begin{abstract}
\baselineskip=16pt
Let $G$ be a locally compact amenable group. We say that $G$ has \textbf{property (M)} if   every closed subgroup of finite covolume in $G$ is cocompact. A classical theorem of Mostow ensures that connected solvable Lie groups have property (M). We prove a non-Archimedean extension of Mostow's theorem by showing the amenable linear locally compact groups have property (M). However  property (M) does not hold for all solvable locally compact groups: indeed, we  exhibit an example of a metabelian locally compact group with a non-uniform lattice. We show that compactly generated metabelian groups, and more generally nilpotent-by-nilpotent groups, do have property (M).  Finally, we highlight a connection of property (M) with the subtle relation  between the analytic notions of strong ergodicity and the spectral gap.
\end{abstract}

AMS Mathematics Subject Classification: 22E40, 22D05, 22G25, 22F10, 22F30.

Keywords: lattices, discrete subgroups, amenable groups.

\tableofcontents

\section{Introduction}

The starting point of this work is the following classical result due to Mostow~\cite{Mostow}:

\begin{thm}[Mostow]\label{theo:Mostow}
Let $G$ be a solvable Lie group and $H$ be a closed subgroup. Then $G/H$ carries a $G$-invariant probability measure if and only if $G/H$ is compact.
\end{thm}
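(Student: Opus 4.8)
The two implications require very different work, and I would treat them separately. The direction ``compact $\Rightarrow$ invariant probability measure'' is soft: a solvable group is amenable, and an amenable locally compact group acting continuously on a nonempty compact space always fixes a probability measure; applied to the compact $G$-space $G/H$ this yields the required $G$-invariant probability measure (so one never needs to check the modular identity $\Delta_G|_H=\Delta_H$ by hand). All the content is therefore in the converse: I must show that a closed subgroup $H$ with $G/H$ of finite invariant volume is cocompact.

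For the converse I would first reduce to $G$ connected and simply connected. Replacing $G$ by $G^0$ costs only a finite factor, since $G/HG^0$ is a discrete homogeneous space with finite invariant measure and hence finite; and passing to the universal cover $\tilde G$ (replacing $H$ by its closed preimage) changes neither the finiteness of the covolume nor the compactness of the quotient. The point of landing in the simply connected world is that such a group is diffeomorphic to $\mathbb{R}^n$ and, crucially, contains no nontrivial compact subgroup; this is what will later exclude the dense ``winding'' behaviour that is the only real danger.

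I would then induct on $\dim G$. Let $N$ be the nilradical; since $[G,G]\subseteq N$ the quotient $G/N$ is abelian, and inside the center of $N$ one can pick a minimal nontrivial closed connected $G$-invariant subgroup $A$, which is then abelian and isomorphic to $\mathbb{R}^d$ with $A\trianglelefteq G$. Set $L=\overline{HA}$. Pushing the invariant measure forward along the equivariant map $G/H\to G/L$ gives a finite invariant measure on $G/L=(G/A)/(L/A)$; since $G/A$ is a simply connected solvable group of strictly smaller dimension, the induction hypothesis shows $L/A$ is cocompact in $G/A$, i.e. $G/L$ is compact. Disintegrating $\mu$ over this compact base, equivariance of the disintegration shows that the conditional measure on the fiber $L/H$ is an $L$-invariant probability measure. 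If the image $HA/A$ of $H$ in $G/A$ happens to be closed, then $L=HA$, the fiber is $HA/H\cong A/(A\cap H)$, and restricting the invariant probability measure to the $A$-action makes $A/(A\cap H)$ a quotient of $\mathbb{R}^d$ of finite Haar measure, hence compact; then $G/H$ fibers over the compact $G/L$ with compact fiber and is itself compact, completing the induction.

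The crux---and the step I expect to be genuinely hard---is exactly the hypothesis used in the last sentence: that the projection $HA/A$ is closed in $G/A$, equivalently that $HA$ is closed in $G$. Finite covolume by itself does not forbid dense subgroups (an irrational line in a torus is the model obstruction), so this must be extracted from the solvable structure rather than from amenability alone. Here simple connectedness is decisive: the absence of compact subgroups means that the ``angular'' directions which would close up inside a torus instead take values in copies of $\mathbb{R}$ and escape properly to infinity, while the contracting/distal dynamics of $G$ on the nilpotent directions, combined with Poincar\'e recurrence for the finite invariant measure $\mu$, should force any would-be dense winding of $H$ to be closed. Turning this heuristic into a proof that $HA$ is closed is the technical heart of the theorem, and it is precisely where the solvability of $G$ (and not merely its amenability) enters.
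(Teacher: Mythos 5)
Your soft direction (compact $\Rightarrow$ invariant measure, via amenability) and your preliminary reductions are all sound and standard: passing to $G^\circ$ and to the universal cover, choosing a minimal closed connected $G$-invariant $A\le Z(N)$, and disintegrating $\mu$ over $G/L$ where $L=\overline{HA}$ are exactly the right moves. But the proposal contains a genuine gap, which you yourself flag: you never prove that $HA$ is closed, and without that the induction collapses rather than merely becoming harder. Concretely, if $HA$ is dense in $G$, then $L=\overline{HA}=G$, the pushforward of $\mu$ to $G/L$ is a point mass carrying no information, and the ``fiber'' $L/H$ is the original space $G/H$; the inductive step returns you to your starting point with nothing gained. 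Your heuristic (no compact subgroups in a simply connected solvable group, plus Poincar\'e recurrence and distal dynamics) is not an argument: density of products of closed subgroups is a real phenomenon in solvable Lie groups, and ruling it out for a cofinite $H$ is not a technical footnote --- it \emph{is} the content of Mostow's theorem. So what you have written is the framing of the classical proof with its heart left open.

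It is instructive to compare with how the paper's own alternative proof (\S\ref{sec:Lie}, Theorem~\ref{thm:linear}) sidesteps precisely this point: it never attempts to prove that such products are closed. After reducing, by induction on the nilpotency class of $G'=[G,G]$, to the situation where $HZ$ is dense in $G$ (with $Z$ the center of $G'$), it exploits the density instead of fighting it: the closed subgroup $F=H\cap G'$ is normalized by $H$ and centralized by $Z$, hence normalized by the dense subgroup $HZ$, hence normal in $G$. Quotienting by $F$ makes $H$ abelian, and the abelian case is settled by a separate lemma whose proof again only ever uses closures like $\overline{BH}$, deriving in the irreducible $2$-dimensional case a contradiction from the compactness of $O(2)$ and irrational rotations. (The paper's ``official'' derivation of Theorem~\ref{theo:Mostow} in the introduction is different again: it quotes the Benoist--Quint reduction, Theorem~\ref{theo:reduction}, and uses that $G/G^\circ$ is discrete for a Lie group.) To complete your proof along your own lines you would need to import a device of this kind --- density $\Rightarrow$ normality of $H\cap G'$, followed by the abelian lemma --- or else follow Raghunathan's treatment, where the closedness issue you identified is handled by the main technical work of the chapter.
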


Mostow's original proof is quite involved, and uses in an essential way the structure theory of solvable Lie groups and their Lie algebras. A simplified (but still Lie theoretic) proof due to Raghunathan, may be found in \cite[Theorem~3.1]{Raghu}. One direction of the equivalence in Theorem~\ref{theo:Mostow} is actually straightforward to establish for general amenable groups without invoking any structure results on Lie groups: if $G$ is amenable then any compact $G$-space carries a $G$-invariant probability measure by the fixed point property of amenable groups. The present paper is devoted to the question whether the converse implication also holds for general amenable locally compact groups.

\begin{ques}\label{ques:v1}
Let $G$ be an amenable ($\sigma$-compact) locally compact group, and $H$ a closed subgroup.

Is it true that, if $H$ is of finite covolume, then it is cocompact?
\end{ques}

Our initial motivation in addressing that question came from an earlier work \cite{BCGM}, when we observed that a positive answer to Question~\ref{ques:v1} would significantly simplify the proof of the main result from loc.~cit.

We say that $H$ is \textbf{of finite covolume}, or shortly \textbf{cofinite}, if $H$ is closed and $G/H$ carries a $G$-invariant probability measure. We say that $H$ is \textbf{cocompact} if $H$ is closed and $G/H$ is compact.

\begin{defn}
Let $G$ be a locally compact group. We will say that $G$ has property $(M)$ if every cofinite subgroup of $G$ is cocompact.\footnote{We have chosen the letter $M$ since Mostow proved this property for solvable Lie groups after Malcev proved it for nilpotent Lie groups.}
\end{defn}

This paper focuses on the study of property $(M)$ for amenable groups.
In order to illustrate that, consider the most basic class of amenable groups, namely abelian groups. If $G$ is abelian, any subgroup is normal. In particular, if $H$ is a cofinite subgroup of $G$, then the quotient space $G/H$ is a locally compact group carrying a finite invariant measure. This measure is thus proportional to the Haar measure on $G/H$, and it follows that $G/H$ is compact since any non-compact group has infinite Haar measure. That argument can in fact be promoted to cover the case of all nilpotent locally compact groups, see Proposition \ref{prop:nilpotent}. Mostow's theorem mentioned above provides a positive answer to the question in case $G$ is a solvable Lie group. In fact, Mostow's (or Raghunathan's) proof  can easily be adapted to cover the case of all amenable Lie groups. More generally, one has the following reduction due to Benoist--Quint \cite{BenoistQuint}.

\begin{thm}[See Proposition 3.4 in \cite{BenoistQuint}]\label{theo:reduction}
Let $G$ be an amenable locally compact group, and $H$ be a closed subgroup of finite covolume. Then $H$ is cocompact in $\overline{ G^\circ H}$.

In particular $H$ is cocompact in $G$ if and only if the closure of the image of $H$ in the group of components $G/G^\circ$ is cocompact.
\end{thm}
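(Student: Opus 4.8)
The plan is to avoid analysing the $G^\circ$-action on $G/H$ directly and instead reduce, via a well-chosen open subgroup, to the \emph{almost connected} case, where the solution of Hilbert's fifth problem combines with the amenable-Lie-group version of Mostow's theorem (Theorem~\ref{theo:Mostow}, valid for all amenable Lie groups as recalled above). Write $p \colon G \to Q := G/G^\circ$ for the quotient onto the totally disconnected group $Q$, set $\bar H := \overline{p(H)}$, and note that $\overline{G^\circ H} = p^{-1}(\bar H) =: L$. By van Dantzig's theorem, $Q$ admits a compact open subgroup $U \le Q$; put $W := p^{-1}(U)$. Since $W$ is open in $G$ we have $W^\circ = G^\circ$, and $W/W^\circ \cong U$ is compact, so $W$ is an almost connected amenable locally compact group containing $G^\circ$.

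The first step is to observe that $H \cap W$ is cofinite in $W$. Indeed, as $W$ is open, the orbit $WH/H$ is an open, hence positive-measure, $W$-invariant subset of $G/H$, and it is $W$-equivariantly homeomorphic to $W/(W \cap H)$; restricting the $G$-invariant probability measure on $G/H$ to $WH/H$ and normalizing yields a $W$-invariant probability measure on $W/(W \cap H)$. This openness trick is the decisive simplification, since it replaces the ambient $G$ by the almost connected group $W$ while preserving cofiniteness.

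The second step is to establish property $(M)$ for the almost connected amenable group $W$, which will give that $H \cap W$ is cocompact in $W$. By the Gleason--Yamabe theorem, $W$ has a compact normal subgroup $K$ with $W/K$ a Lie group; because $W/W^\circ$ is compact, $W/K$ has finitely many components, and it is amenable, being a quotient of the closed subgroup $W \le G$. Pushing the invariant measure forward along $W/(W \cap H) \to W/(K(W\cap H))$ shows that the image of $H \cap W$ is cofinite in $W/K$, so by the amenable-Lie case of Theorem~\ref{theo:Mostow} it is cocompact; thus $K(W \cap H)$ is cocompact in $W$. Since $K$ is compact, $K(W \cap H)$ is closed and $K(W\cap H)/(W\cap H) \cong K/(K \cap H)$ is compact, whence $H \cap W$ is cocompact in $W$. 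I expect this step to be the only genuinely non-formal one: it is exactly here that the real content, namely Hilbert's fifth problem and Mostow's theorem, is used.

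Finally I would deduce the theorem. Write $W = C(W \cap H)$ with $C \subseteq W$ compact. Using $G^\circ \subseteq W$ we get $G^\circ H \subseteq CH$, so $L = \overline{G^\circ H} \subseteq CH$ (a product of a compact set and a closed set is closed). For $l \in L$, writing $l = ch$ with $c \in C$ and $h \in H$ forces $c = lh^{-1} \in L$, so $L = (C \cap L)H$ with $C \cap L$ compact; hence $H$ is cocompact in $L = \overline{G^\circ H}$. The ``in particular'' statement then follows from the fibration $L/H \to G/H \to G/L \cong Q/\bar H$: the fibre $L/H$ is compact by what was just proved, so $G/H$ is compact if and only if $G/L \cong Q/\bar H$ is, that is, if and only if $\bar H = \overline{p(H)}$ is cocompact in $Q = G/G^\circ$.
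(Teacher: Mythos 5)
Your proposal is correct, and it takes essentially the same route as the paper: the paper gives no self-contained proof of Theorem~\ref{theo:reduction}, but cites Benoist--Quint and sketches precisely this strategy --- reduction via the solution of Hilbert's fifth problem (Gleason--Yamabe) to the Lie group case, followed by Mostow's theorem in its extension to amenable Lie groups. Your van Dantzig/open-subgroup bookkeeping (restricting the invariant measure to the open orbit $WH/H$, pushing it forward to $W/K(W\cap H)$, and the final compactness transfers) correctly fleshes out that reduction, with the same two deep inputs carrying all the real content.
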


If the identity component $G^\circ$ is open in
$G$, then $G/G^\circ$ is discrete so that any subgroup of finite covolume is obviously cocompact. This is in particular the case if $G$ is a Lie group, so  Mostow's theorem follows immediately from Theorem~\ref{theo:reduction}. The proof of Theorem~\ref{theo:reduction}  relies on deep structure theory: one uses the solution of   Hilbert's fifth problem to reduce to the case where $G^\circ$ is a Lie group. Structure theory of Lie groups ensures that a connected amenable Lie group is   solvable-by-compact; one is then in a position to argue by induction on the derived length of the solvable radical, as Mostow did in his original proof.

As soon as one leaves the realm of almost connected groups, the algebraic structure of amenable (and even solvable) groups becomes more intricate. In order to illustrate this, let us mention two examples illustrating  that, in contrast with the Lie group case, the intersection of a lattice in a compactly generated nilpotent (resp. solvable) locally compact group with the center (resp. the derived group) of the ambient group need not be a lattice.

\begin{example}
Let
$$H = \left\{ \left(\begin{array}{ccc} 1 & x & z \\
0 & 1 & y\\
0 & 0 & 1
\end{array} \right)
\mid x, y, z \in \ZZ\right\}$$
be the $3$-dimensional Heisenberg group over $\ZZ$, and let
$U $ be the upper unitriangular subgroup of compact group $\SL_4(\ZZ_p)$.
Then $G = H \times U$ is compactly generated and $3$-step nilpotent. Let $\varphi \colon H \to U$ be the group homomorphism defined by
$$\varphi(
\left(
\begin{array}{ccc}
1 & x & z \\
0 & 1 & y\\
0 & 0 & 1
\end{array} \right) )  =
\left(
\begin{array}{cccc}
1 & x & z & 0 \\
0 & 1 & y & 0\\
0 & 0 & 1 & 0\\
0 & 0 & 0 & 1
\end{array} \right)
$$
and let $\Gamma \leq H \times U = G$ be the graph of $\varphi$.
Then $\Gamma$ is a cocompact lattice in $G$. Moreover the center $Z(G)$ of $G$ is non-compact, but the intersection of $\Gamma \cap Z(G)$ is trivial, and is thus not a lattice in $Z(G)$.
\end{example}

\begin{example}
Let $U = \varprojlim C_2 \wr C_{2^n}$ be the profinite wreath product of $C_2$ by $\ZZ_2 = \varprojlim C_{2^n}$, so that the lamplighter group $C_2 \wr \ZZ$  embeds as a dense subgroup of $U$. Let $G = D_\infty \times U$ be the direct product of the infinite dihedral group $D_\infty$ with $U$, so that $G$ is compactly generated and metabelian.  Let $t_1 \in D_\infty$ be a generator of the infinite cyclic subgroup of index~$2$, and $t_2 \in U$ be a topological generator of the semi-direct factor $\ZZ_2$ of $U$. Then the infinite cyclic group $\Gamma = \langle (t_1, t_2) \rangle \leq G$ is   a cocompact lattice in $G$. The closed derived group $\overline{[G, G]}$ is isomorphic to the direct product $\ZZ \times (\prod_\ZZ C_2)$, and intersects $\Gamma$ trivially.
\end{example}

The locally compact groups in both examples above are actually linear. Therefore they must have property (M) by virtue of the following result, which provides a positive answer to Question~\ref{ques:v1} for linear groups and may be viewed as   a non-Archimedean version of Mostow's theorem.

\begin{thm}\label{theo:LinearAmenable}
Let $n\geq 1$ be an integer, and for $i=1, \dots, n$, let  $G_i$ be an amenable locally compact  group. Assume that for each $i$, the group $G_i$ has a continuous faithful representation $G_i \to \GL_{d_i}(k_i)$ over a locally compact field $k_i$ (whose image is not required to be closed). Then the direct product $G = \prod_{i=1}^n G_i$ has property (M).
\end{thm}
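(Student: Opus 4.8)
The plan is to reduce, via Theorem~\ref{theo:reduction}, to a statement about the group of connected components, and then to use linearity, through the Tits alternative and the structure of amenable algebraic groups, to control the resulting totally disconnected factors. (The direction ``cocompact $\Rightarrow$ cofinite'' is free from amenability, as noted after Theorem~\ref{theo:Mostow}, so only the converse needs work.) First I would analyze $G^\circ$ place by place using the faithful continuous $\rho_i\colon G_i\to\GL_{d_i}(k_i)$. If $k_i$ is non-Archimedean then $\GL_{d_i}(k_i)$ is totally disconnected, so $\rho_i(G_i^\circ)$ is a connected subgroup of a totally disconnected group, hence trivial, and by faithfulness $G_i^\circ=1$, i.e. $G_i$ is \tdlc. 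If $k_i$ is Archimedean then $\GL_{d_i}(k_i)$ has no small subgroups, and any small subgroup of $G_i$ would map injectively to one, hence be trivial; thus $G_i$ has no small subgroups and is a Lie group by the solution of Hilbert's fifth problem, so $G_i^\circ$ is open and $G_i/G_i^\circ$ is discrete. Hence $G/G^\circ\cong\prod_i(G_i/G_i^\circ)$ is a totally disconnected group, a direct product of discrete groups and of \tdlc amenable linear groups over non-Archimedean local fields, and by Theorem~\ref{theo:reduction} it suffices to prove property $(M)$ for $Q:=G/G^\circ$.

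The core is a structural description of $Q$. Writing $Q$ as a continuous injective image in $L:=\prod_i\GL_{d_i}(k_i)$, I would pass to the Zariski closure $\BG$ of the image in the product of algebraic groups. Since $Q$ is amenable its image contains no non-abelian free subgroup, so by the Tits alternative each factor of $\BG$ is solvable-by-finite; thus $\BG^\circ$ is solvable and trigonalizable over a finite extension. Factorwise, $\BG^\circ(k_i)\cong\BT(k_i)\ltimes\mathbf{U}(k_i)$, where $\mathbf{U}(k_i)$ is a closed nilpotent group, for which property $(M)$ holds by Proposition~\ref{prop:nilpotent}, and $\BT(k_i)$, for a torus $\BT$, is compact-by-$\ZZ^{r_i}$ (as $k_i^\times\cong\ZZ\times\cO^\times$ with $\cO^\times$ compact). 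The aim is to reduce property $(M)$ for $Q$ to property $(M)$ for closed amenable subgroups of products $\prod_i\BG_i(k_i)$ of amenable algebraic groups, which I would establish by a Mostow-type induction along the derived series, the nilpotent unipotent layers being handled by Proposition~\ref{prop:nilpotent} and the Archimedean/discrete factors forming the base.

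For the finish I would detect non-cocompactness through the linear structure directly: the diagonalizable (torus) directions furnish continuous ``weight'' homomorphisms $Q\to k_i^\times$, and composing with valuations gives continuous homomorphisms $Q\to\ZZ^{r_i}$. If $H$ were cofinite but not cocompact, the induction would push its invariant probability measure forward to a finite invariant measure on a non-compact homogeneous space of a free abelian (or nilpotent) quotient along one of these homomorphisms, which is impossible by the elementary abelian argument recalled in the Introduction (a finite invariant measure on a non-compact abelian homogeneous space would be a finite Haar measure). Descending back through the compact kernels and the finite index $[\BG:\BG^\circ]$ then yields cocompactness of $H$ in $Q$, hence in $G$ by Theorem~\ref{theo:reduction}.

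The main obstacle is precisely this structural step, for two reasons. First, the $\rho_i$ need \emph{not} have closed image, so relating $Q$ to its algebraic model requires care: rather than assuming a closed embedding I would work with the closed amenable subgroup $\overline{\rho(Q)}\le L$ and with the intrinsic weight/valuation homomorphisms above, tracking covolumes along the continuous injection $Q\to\overline{\rho(Q)}$. Second, when some $k_i$ has positive characteristic the Tits alternative and the trigonalization of $\BG^\circ$ are more delicate, and the torus--unipotent decomposition must be invoked with care. This is exactly where linearity is essential: it rules out the graph-type subgroups across factors that, in the metabelian examples of the Introduction, produce non-uniform lattices and show that property $(M)$ is not stable under direct products in general.
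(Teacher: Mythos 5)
Your opening reduction is sound and essentially parallel to the paper's: the no-small-subgroups argument makes the Archimedean factors Lie groups, faithfulness makes the non-Archimedean factors totally disconnected, and Theorem~\ref{theo:reduction} reduces the problem to property (M) for $Q=G/G^\circ$, a product of discrete groups and amenable \tdlc groups linear over non-Archimedean local fields. (A side remark: the easy direction cocompact $\Rightarrow$ cofinite does use amenability, via the fixed point property; it is simply irrelevant to (M).) The proof collapses, however, at your first structural step. You invoke the Tits alternative to conclude that the Zariski closure $\BG$ of $\rho(Q)$ is solvable-by-finite because $Q$ is amenable. But the hypothesis is amenability of $Q$ as a \emph{topological} group, which does not exclude nonabelian free subgroups of the image: $\SL_2(\ZZ_p)$ is compact, hence amenable, yet it contains nonabelian free subgroups and is Zariski dense in $\SL_2$. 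Consequently $\BG^\circ$ need not be solvable, there is in general no decomposition $\BT\ltimes\mathbf{U}$, and the weight/valuation homomorphisms you build from it need not exist. The correct structural statement --- Theorem~\ref{thm:GuivarchRemy} of Guivarc'h--R\'emy, supplemented in positive characteristic by Pink's theorem (Theorem~\ref{thm:Pink}) --- necessarily allows compact pieces that are open in simple algebraic groups; the Tits alternative enters the paper's argument only for \emph{finitely generated} discrete subgroups of such pieces.

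Even granting a correct structure theory, your two closing devices would fail, and what replaces them is the actual content of the paper. First, property (M) is not stable under extensions, so a ``Mostow-type induction along the derived series'' with nilpotent layers handled by Proposition~\ref{prop:nilpotent} cannot close up: the group of Theorem~\ref{theo:example} is metabelian --- every layer of its derived series is abelian, hence has (M) --- yet it fails (M). Moreover that same group is locally elliptic (an increasing union of compact open subgroups), so it admits no nontrivial continuous homomorphism to $\ZZ^r$ at all; your proposed detection of non-cocompactness via valuations into free abelian quotients is blind precisely on the kind of group where (M) can fail, and you never supply the mechanism by which linearity restores it. Second, ``tracking covolumes along $Q\to\overline{\rho(Q)}$'' is not meaningful: that map is injective and continuous but not proper, and a cofinite subgroup of $Q$ has no cofinite counterpart in $\overline{\rho(Q)}$. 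The paper's mechanism is of a different kind: linearity forces the $p$-radical $\LFp(Q)$ to be \emph{open} for a single prime $p$ (Theorem~\ref{thm:StructureAmenableLinear}, proved via Guivarc'h--R\'emy, $p$-adic Lie theory in characteristic $0$, and Pink's theorem in characteristic $p$); a locally $p$-elliptic group is an increasing union of pro-$p$ compact open subgroups, whose chain can be refined so that every successive index equals $p$, and then Serre's covolume formula quantizes covolume increments (Proposition~\ref{prop:UniformBound}), forcing cofinite $\Rightarrow$ cocompact (Lemma~\ref{lem:Locally-p-elliptic->bddIndex}); property (M) then passes to $Q$ by Proposition~\ref{prop:discrete}(iii), and to the product by Proposition~\ref{prop:DirectProd} together with a joint embedding of the all-Archimedean case into $\GL_N(\BC)$. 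This single-prime, bounded-index phenomenon is exactly what separates linear groups from the counterexample --- whose $p$-radical is open for no prime $p$ --- and it is absent from your proposal.
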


The proof of Theorem~\ref{theo:LinearAmenable} relies on a description of the algebraic structure of amenable linear locally compact groups which is of independent interest, see Theorem~\ref{thm:StructureAmenableLinear} below.

As of today, there is however no structure theory encompassing \emph{all} amenable locally compact groups. In trying to exploit the hypothesis of amenability in the question above, one is thus naturally led to looking for a more analytic formulation of the question. In order to achieve this, we need to recall some ergodic theoretic definitions.

A measure-preserving action of a group $G$ on a probability space $(X, m)$ is said to have the \textbf{spectral gap} if the unitary representation of $G$ on the space $L^2_0(X, m)$ of (equivalence classes of) $L^2$-maps of zero integral does not contain almost invariant vectors. Following Margulis' terminology, a closed subgroup $H$ of finite covolume in a $\sigma$-compact locally compact group $G$ such that $G/H$ has the spectral gap, is called \textbf{weakly cocompact}. This is motivated by the fact, due to Margulis \cite[Corollary~III.1.10]{Margulis}, that if $H$ is cocompact, then it is automatically weakly cocompact. It turns out that if the ambient group is amenable, then the converse holds.

\begin{prop}\label{propo:WC}
Let $G$ be an amenable  second countable  locally compact group, and $H$ be a closed subgroup of finite covolume. Then
$H$ is cocompact if and only if $H$ is weakly cocompact.
\end{prop}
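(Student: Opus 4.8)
The forward implication, that a cocompact subgroup is weakly cocompact, holds for arbitrary locally compact $G$ and is exactly Margulis's observation \cite[Corollary~III.1.10]{Margulis}; so the content is the converse, and I would prove its contrapositive: if $G$ is amenable, $H$ is of finite covolume and $G/H$ is \emph{not} compact, then the Koopman representation $\pi$ of $G$ on $L^2_0(G/H,m)$ admits almost invariant vectors, i.e. $G/H$ does not have the spectral gap. The first thing I would record is that, since $G/H$ is a single $G$-orbit, the support of the invariant probability measure $m$ is a closed invariant subset and hence all of $G/H$; in particular $m$ is non-atomic and no compact subset of $G/H$ has full measure.

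To produce almost invariant vectors I would transport amenability from $G$ to $G/H$. Using second countability and amenability, fix a Følner sequence $(F_n)$ and let $\phi_n\in L^1(G)$ be the associated probability densities, so that $\|L_s\phi_n-\phi_n\|_1\to 0$ uniformly for $s$ in compact sets. The fibre-integration map $T\phi(gH)=\int_H\phi(gh)\,d\mu_H(h)$ is $G$-equivariant and, by Weil's formula, is a contraction from $L^1(G)$ to $L^1(G/H)$ preserving the integral; hence $f_n:=T\phi_n\ge 0$ satisfies $\int_{G/H}f_n\,dm=1$ and $\|L_s f_n-f_n\|_1\to 0$ uniformly on compacta. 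Passing to square roots via the elementary bound $\int|\sqrt{a}-\sqrt{b}|^2\le\int|a-b|$, the unit vectors $\xi_n:=\sqrt{f_n}\in L^2(G/H,m)$ satisfy $\|\pi(s)\xi_n-\xi_n\|_2\to 0$ uniformly on compacta, so $(\xi_n)$ is a sequence of almost invariant vectors for the full Koopman representation.

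Writing $\xi_n=c_n\mathbf 1+\eta_n$ with $\eta_n\in L^2_0(G/H,m)$ and $c_n=\int_{G/H}\sqrt{f_n}\,dm$, the constant component is invariant, so $\pi(s)\xi_n-\xi_n=\pi(s)\eta_n-\eta_n$ and $(\eta_n)$ is almost invariant inside $L^2_0$. Thus it suffices to show $\|\eta_n\|_2\not\to 0$, equivalently $c_n\not\to 1$; since $\|\xi_n\|_2=1$, by Cauchy--Schwarz $c_n\le 1$, with $c_n\to 1$ precisely when $f_n\to\mathbf 1$ in $L^1$, i.e. when the images in $G/H$ of the normalised restrictions of Haar measure to the $F_n$ converge to $m$ in total variation. \emph{This non-concentration step is the crux and the only place non-compactness is used:} I would argue that when $G/H$ is non-compact there is genuine escape of mass, preventing the Følner averages from equidistributing in the strong ($L^1$) sense, so that $\liminf_n(1-c_n)>0$ and $(\eta_n/\|\eta_n\|_2)$ witnesses the failure of the spectral gap. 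The main obstacle is exactly to make this escape-of-mass mechanism quantitative on a homogeneous space---where, transitivity notwithstanding, large-measure compact sets cannot be displaced by the action, so the mechanism is genuinely delicate---and here I expect to invoke the subtle comparison between strong ergodicity and the spectral gap alluded to in the introduction, upgrading the weak convergence of the pushforwards towards $m$ to the conclusion that total-variation convergence would force $G/H$ to be compact.
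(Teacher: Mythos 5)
Everything in your argument up to the decomposition $\xi_n=c_n\mathbf 1+\eta_n$ is correct, and it is essentially the same reduction the paper performs (the paper transfers $L^1$-almost invariance to $L^2$ via \cite[Th.~9.1]{BL} instead of your elementary square-root inequality). But the step you yourself flag as the crux is a genuine gap, and worse: the statement you propose to prove there --- that total-variation convergence $f_n\to\mathbf 1$ of the pushforwards would force $G/H$ to be compact --- is \emph{false}, as the paper's own example shows. In the group $G=\Lambda\rtimes S$ of Theorem~\ref{theo:example}, $G$ is an ascending union of compact open subgroups $O_0\leq O_1\leq\cdots$, and these form a perfect F\o lner sequence: any compact $K\subset G$ lies in some $O_m$, and $gO_k=O_k$ for all $g\in O_m$, $k\geq m$. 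Take $H=\Gamma$ a non-uniform lattice. The pushforward to $G/\Gamma$ of the normalized Haar measure of $O_k$ is an $O_k$-invariant probability measure on $O_k\Gamma/\Gamma\cong O_k/(O_k\cap\Gamma)$, hence equals the normalized restriction of $m$ to that set; so your density is $f_k=\mathcal X_{O_k\Gamma/\Gamma}/m(O_k\Gamma/\Gamma)$ and $\|f_k-\mathbf 1\|_1=2\bigl(1-m(O_k\Gamma/\Gamma)\bigr)$. By Proposition~\ref{prop:ConstrScheme} (equivalently Proposition~\ref{prop:criterion}), $m(O_k\Gamma/\Gamma)=C_k/\lim_j C_j$, and finite covolume means exactly that $(C_k)$ \emph{converges}, so $f_k\to\mathbf 1$ in $L^1$; non-compactness of $G/\Gamma$ only says $(C_k)$ is not eventually constant, which is compatible with convergence. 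Thus for this F\o lner sequence $c_k\to 1$, $\eta_k\to 0$, and your construction produces no almost invariant vectors in $L^2_0$, even though the conclusion you are after is true for this very group. There is genuine "escape of mass" here, but it is compatible with total-variation equidistribution, so your mechanism cannot close the argument.

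The way the paper's proof of Proposition~\ref{prop:SpectralGap} differs is precisely that it does not use the F\o lner densities themselves: it introduces an extra degree of freedom that forces the mass of the test functions out to infinity, instead of hoping it escapes on its own. Fix a compact exhaustion $(X_n)$ of $X=G/H$; for each F\o lner set $F_n$ choose a compact set $A_n$ of positive measure inside $X\setminus F_n^{-1}X_n$ (possible exactly because $X$ is non-compact and $F_n^{-1}X_n$ is compact), and set $\psi_n=F_n\ast\mathcal X_{A_n}$. Almost invariance of $\psi_n/\|\psi_n\|_1$ in $L^1$ follows from the F\o lner property just as in your computation, but now the support of $\psi_n$ lies in $F_nA_n\subset X\setminus X_n$, so by Cauchy--Schwarz $\int_X\sqrt{\psi_n/\|\psi_n\|_1}\,dm\leq m(X\setminus X_n)^{1/2}\to 0$: the inner product with the constants tends to $0$ automatically, and the projection to $L^2_0$ has norm tending to $1$. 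Choosing $A_n$ \emph{after} seeing $F_n$ is what replaces your unproved (and unprovable) non-concentration claim; if you want to repair your write-up, this is the mechanism to import.
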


A closely related property of a probability measure preserving (p.m.p.) $G$-action on a space  $(X, m)$ is \textbf{strong ergodicity}: by definition, this is the property that   for every sequence $(B_n)$ of measurable subsets of $X$ which is almost invariant in the sense that $\lim_n m(B_n \Delta gB_n) = 0$ for all $g \in G$, one has $\lim_n m(B_n) (1 - m(B_n))=0$.  Strong ergodicity was introduced by Klaus Schmidt \cite{Schmidt80} as an invariant under orbit equivalence for discrete groups.  The following shows that strong ergodicity also pops up naturally in the context of non-discrete locally compact groups.

\begin{prop}\label{propo:SE}
Let $G$ be a second countable locally compact group and $H$ be a closed subgroup of finite covolume. Then the $G$-action on $G/H$ is strongly ergodic.

In other words, every continuous, transitive, p.m.p. action of a locally compact group is strongly ergodic.
\end{prop}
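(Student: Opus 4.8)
The plan is to reduce the statement to an $L^1$-approximation claim and then exploit transitivity through a compactness argument. Write $X=G/H$, let $m$ be the invariant probability measure, $x_0=eH$, and set $f_n=\mathbf 1_{B_n}$, so that almost invariance reads $\|g\cdot f_n-f_n\|_{1}=m(gB_n\,\Delta\,B_n)\to 0$ for every $g\in G$, where $(g\cdot f)(x)=f(g^{-1}x)$. Fix once and for all a function $\phi\in C_c(G)$ with $\phi\ge 0$ and $\int_G\phi\,d\lambda=1$ ($\lambda$ a left Haar measure), and consider the smoothing operator $P_\phi f(x)=\int_G\phi(g)f(g^{-1}x)\,d\lambda(g)$. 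I would first record three properties of $F_n:=P_\phi f_n$. (i) Since $P_\phi f_n-f_n=\int_G\phi(g)(g\cdot f_n-f_n)\,d\lambda(g)$, Minkowski's integral inequality and dominated convergence give $\|F_n-f_n\|_1\to 0$; combined with almost invariance this yields $\|g\cdot F_n-F_n\|_1\to 0$, and hence $\|g\cdot F_n-F_n\|_2^2\le\|g\cdot F_n-F_n\|_1\to 0$ for every $g$, because $0\le F_n\le 1$. (ii) Writing points of $X$ as $ax_0$ and using left invariance of $\lambda$, one computes $F_n(ax_0)=\int_G\phi(ah)f_n(h^{-1}x_0)\,d\lambda(h)$, whence the uniform estimate $|F_n(vx)-F_n(x)|\le\|\phi-L_{v^{-1}}\phi\|_1$ for all $x\in X$, all $n$, and all $v\in G$, where $L_{v^{-1}}\phi(h)=\phi(vh)$. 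As translation is continuous on $L^1(G)$, the right-hand side tends to $0$ as $v\to e$, so $(F_n)$ is uniformly equicontinuous for the $G$-uniformity on $X$. (iii) The functions $F_n$ are continuous.

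Next I would prove the key claim: $\mathrm{Var}_m(F_n):=\|F_n-\int F_n\,dm\|_2^2\to 0$. Suppose not; after passing to a subsequence we may assume $m(B_n)(1-m(B_n))\ge\delta>0$. Since $G$ is second countable, $X$ is metrizable and $\sigma$-compact; write $X=\bigcup_j K_j$ with $K_j$ compact and increasing. On each $K_j$ the uniform estimate in (ii), together with openness of the orbit map (so that $Vx$ is a neighbourhood of $x$, turning the $G$-uniform equicontinuity into ordinary equicontinuity), lets me apply Arzel\`a--Ascoli; a diagonal extraction then produces a subsequence along which $F_n$ converges, uniformly on each $K_j$, to a continuous function $F\colon X\to[0,1]$, and dominated convergence gives $F_n\to F$ in $L^2(m)$. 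For each fixed $g$, combining $g\cdot F_n\to g\cdot F$ in $L^2$ (the Koopman operators are isometries) with (i) yields $\|g\cdot F-F\|_2=0$, so $F$ is $G$-invariant. Here transitivity enters decisively: the support of $m$ is a nonempty closed $G$-invariant set, hence equals the single orbit $X$, so $m$ has full support; two continuous functions agreeing $m$-a.e. therefore agree everywhere, and $F(gx)=F(x)$ for all $g,x$ forces $F$ to be a constant $c$. Then $F_n\to c$ in $L^2$, so by (i) $f_n\to c$ in $L^1$, which forces $m(B_n)\to c$ and $2c(1-c)=\lim\int|\mathbf 1_{B_n}-c|\,dm=0$, i.e. $c\in\{0,1\}$, contradicting $m(B_n)(1-m(B_n))\ge\delta$.

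Finally, granting the claim, I would conclude as follows. From $\mathrm{Var}_m(F_n)\to 0$ and $|\int F_n\,dm-m(B_n)|\le\|F_n-f_n\|_1\to 0$ one gets $\|f_n-m(B_n)\|_1\le\|f_n-F_n\|_1+\|F_n-\int F_n\|_1+|\int F_n-m(B_n)|\to 0$; since $\|f_n-m(B_n)\|_1=2\,m(B_n)(1-m(B_n))$, this is exactly strong ergodicity. The hard part is the claim: because $H$ need not be cocompact, $X$ is genuinely non-compact, so Arzel\`a--Ascoli must be fed through $\sigma$-compactness and the finiteness of $m$ rather than applied globally; and crucially one cannot invoke a spectral gap, which is unavailable in this generality (compare Proposition~\ref{propo:WC}), so the collapse of the almost-invariant functions to a constant must come entirely from transitivity via the full-support argument, and not from any coercive estimate. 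The smoothing step, which upgrades the merely $L^1$-almost-invariant indicator functions to a uniformly equicontinuous family, is precisely what makes this compactness argument available.
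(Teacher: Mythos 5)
Your proposal is correct, but it follows a genuinely different route from the paper's. The paper's proof is direct and quantitative: using the disintegration (Weil) formula of Lemma~\ref{lem:haarnorm}, it builds an explicit compact set $K=FOF^{-1}$ (with $O\subset H$ a compact identity neighbourhood and $F\subset G$ compact with $m(F\cdot x_0)\ge 0.9$) and proves the pointwise lower bound $\mu_G(\{k\in K \mid k\cdot y\in A\})\ge \delta\, m(A)$ for $y\in F\cdot x_0$ and $A\subseteq F\cdot x_0$, where $\delta$ is controlled by the modular function on $F$; a Fubini double count then shows that no set of measure $1/2$ can be $(K,\varepsilon)$-invariant for an explicit $\varepsilon>0$. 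This yields a single uniform witness $(K,\varepsilon)$ for strong ergodicity, but it leans on the reduction (stated in the paper as ``easy to see'') that strong ergodicity of an ergodic action is equivalent to the nonexistence of such invariant sets of measure $1/2$. Your argument is softer and functional-analytic: convolution against $\phi\in C_c(G)$ upgrades the almost invariant indicators to a uniformly bounded, $G$-uniformly equicontinuous family of continuous functions; second countability makes $X$ metrizable and $\sigma$-compact, so Arzel\`a--Ascoli plus a diagonal extraction produces a continuous limit, invariant $m$-a.e.\ for each $g$; transitivity then enters twice --- full support of $m$ upgrades a.e.-invariance to everywhere-invariance, and transitivity forces the limit to be constant --- giving the contradiction. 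Your route verifies the definition of strong ergodicity directly for an arbitrary almost invariant sequence (no ergodicity/normalization reduction, no modular function, no disintegration formula beyond routine facts), at the cost of being purely qualitative: unlike the paper's proof it produces no explicit pair $(K,\varepsilon)$. One small point you should make explicit: to define $F_n$ pointwise and apply Fubini you should fix Borel representatives of the sets $B_n$ (the value $F_n(x)$ is then independent of the representative, since for fixed $x$ the set of $g$ with $g^{-1}x$ in a given $m$-null set is $\lambda$-null, by the Weil formula); this is routine and does not affect the argument.
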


We shall present a short direct proof of Proposition~\ref{propo:SE}. Adrian Ioana has pointed out to us that this proposition can also be derived as a consequence of \cite[Lemma~2.5]{Ioana}. 

It is well known, and easy to see, that if a p.m.p. $G$-action has the spectral gap, then it is strongly ergodic.
The converse does not hold in general, as shown by Schmidt \cite[Example 2.7]{Schmidt81}. With Proposition~\ref{propo:SE} at hand, one also sees that if $\Gamma$ is a lattice in a   locally compact group $G$ which is not weakly cocompact, then the $G$-action on $G/\Gamma$ is strongly ergodic without the spectral gap. The first examples of such lattices were constructed in the automorphism group of a regular locally finite tree by Bekka--Lubotzky~\cite{BekkaLubo}, thereby providing non-discrete examples of strongly ergodic actions without the spectral gap.

However, the difference between strong ergodicity and the spectral gap for p.m.p. actions   is rather subtle. In fact, there are significant classes of groups for which both properties are equivalent for all p.m.p. actions. This is for instance trivially the case for locally compact groups with Kazhdan's property (T);  moreover, it is also the case for the class of countable amenable (discrete) groups, see  \cite[Theorem~2.4]{Schmidt81}. Using Propositions~\ref{propo:WC} and~\ref{propo:SE}, we can now reformulate Question~\ref{ques:v1} as follows.

\begin{ques}\label{ques:v2}
Let $G$ be an amenable second countable locally compact group with a measure-preserving action on a standard probability space $(X, m)$.

Is it true that if the $G$-action is strongly ergodic, then it has the spectral gap?
\end{ques}

It turns out that both Questions~\ref{ques:v1} and~\ref{ques:v2} have a negative answer. In order to describe a concrete example illustrating this matter of fact, consider a sequence of prime powers  $(q_n)_{n \geq 0}$. Let $\FF_{q_n}$ be the finite field of order $q_n$; we denote its multiplicative group by $\FF_{q_n}^*$. We define  the following groups:
$$
\Lambda = \bigoplus_{n \geq 0} \FF_{q_n} \hspace{1cm} \text{and} \hspace{1cm} S = \prod_{n \geq 0}  \FF_{q_n}^*.
$$
We endow $\Lambda$ with the discrete topology and $S$ with the product topology, so that $S$ is a compact group. Then $S$ acts continuously on $\Lambda$ by automorphisms: this amounts to saying that every finitely generated subgroup of $\Lambda$ is centralized by an open subgroup of $S$, which is indeed the case. We can thus form the semi-direct product
$$G = \Lambda \rtimes S,$$
which is a metabelian (in particular amenable), second countable, totally disconnected, locally compact group. It contains $S$ as a compact open subgroup and $\Lambda$ as a uniform (normal) lattice.

\begin{thm}\label{theo:example}
Assume that $\sum_n \frac 1 {q_n} < \infty$. Then the metabelian group $G$ does not have property $(M)$: Indeed $G$ contains a non-uniform lattice which, as an abstract group, is isomorphic to the direct sum  $\bigoplus_{n \geq 0}  \FF_{q_n}^*$.  Moreover $G$ contains uncountably many pairwise non-commensurable non-uniform lattices, and a single commensurability class of uniform lattices.

In particular, if $\Gamma$ is a non-uniform lattice in $G$, then the $G$-action on $G/\Gamma$ is strongly ergodic but does not have the spectral gap.
\end{thm}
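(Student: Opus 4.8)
The plan is to realize the non-uniform lattice as the graph of an explicit $1$-cocycle, to compute its covolume by hand (this is where the hypothesis $\sum_n q_n^{-1}<\infty$ enters), and to detect cocompactness through a count of $S$-double cosets. Write $S_0=\bigoplus_{n\ge 0}\FF_{q_n}^*$ for the countable dense subgroup of $S$, and define $\sigma\colon S_0\to\Lambda$ by $\sigma(s)_n=s_n-1$. This is finitely supported since $s_n=1$ for almost all $n$, and a one-line check gives $\sigma(st)=\sigma(s)+s\cdot\sigma(t)$, so $\sigma$ is a $1$-cocycle; morally it is the ``coboundary'' twist by the vector $(1,1,\dots)$, which fails to lie in $\Lambda$, and that is exactly why $\sigma$ is not a genuine coboundary. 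I then set $\Gamma=\{(\sigma(s),s):s\in S_0\}$, a subgroup abstractly isomorphic to $S_0\cong\bigoplus_n\FF_{q_n}^*$. Since $\sigma(s)=0$ forces $s=1$, we have $\Gamma\cap S=\{e\}$, and as $S$ is compact open this already shows $\Gamma$ is discrete.

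The crux is the covolume. Normalise Haar measure on $G$ as the product of counting measure on $\Lambda$ with the probability Haar measure on $S$, so $G$ is unimodular and $\covol(\Lambda)=1$. Letting $\Gamma$ act on the right of $(a,s)$ I would check coordinatewise that $c_n:=a_n-s_n$ is invariant, and that two points share a $\Gamma$-orbit if and only if they have the same $\mathbf c=(c_n)_n$: indeed, since $a$ is finitely supported, $s_n=-c_n\ne 0$ for almost all $n$, so $\mathbf c$ pins down $s$ up to an element of $S_0$. Thus $G/\Gamma$ is in bijection with $C=\{\mathbf c:\ c_n\ne 0\text{ for almost all }n\}$, and a Borel fundamental domain is $F=\{(a,s):\ a_n\in\{0,1\},\ s_n=1\text{ whenever }a_n=1\}$. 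The fibre of $F$ over a finitely supported $0/1$ vector $a$ has $S$-measure $\prod_{n\in\mathrm{supp}(a)}(q_n-1)^{-1}$, and summing this geometric-type series over all admissible $a$ gives
\[
\covol(\Gamma)=\sum_{T\subseteq\NN\ \text{finite}}\ \prod_{n\in T}\frac{1}{q_n-1}=\prod_{n}\Bigl(1+\frac{1}{q_n-1}\Bigr)=\prod_n\frac{q_n}{q_n-1},
\]
which converges precisely when $\sum_n q_n^{-1}<\infty$. Hence $\Gamma$ is a lattice. For non-cocompactness I would use that $S$ compact open implies a lattice is uniform iff it has finitely many $S$-double cosets; since $S$ acts on $C$ by $\mathbf c\mapsto(\sigma_n c_n)_n$, the orbits are classified by the finite set $\{n:c_n=0\}$, so $|S\backslash G/\Gamma|=\infty$ and $\Gamma$ is non-uniform.

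For the uncountable family I would run the same machine on the subgroups $\Gamma_I=\Lambda_{I^c}\times\Gamma^{(I)}$ indexed by $I\subseteq\NN$, where on coordinates in $I$ one uses the twist $\sigma$ and off $I$ one keeps the full $\FF_{q_n}$; the identical orbit computation shows each $\Gamma_I$ is a lattice of covolume $\prod_{n\in I}q_n/(q_n-1)$, non-uniform exactly when $I$ is infinite. The commensurability analysis reduces to intersecting supports: $\Gamma_I\cap\Gamma_{I'}$ has finite index in $\Gamma_I$ only if $I\triangle I'$ is finite, and — the subtle point I expect to be the real obstacle here — conjugation in $G$ only retwists the cocycle coordinatewise without moving the support set $I$, so the conclusion persists up to conjugacy. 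Choosing an almost disjoint family of infinite subsets of $\NN$ of cardinality continuum then yields uncountably many pairwise non-commensurable non-uniform lattices.

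For the uniform class, let $\Gamma$ be any uniform lattice. Since $\Lambda$ is closed and normal and $G\to G/\Gamma$ is proper, $\Gamma\cap\Lambda$ is cocompact in $\Lambda$, hence of finite index because $\Lambda$ is discrete; being finite index in the cocompact $\Lambda$ it is itself a cocompact lattice in $G$, so its covolume is finite, which forces $[\Gamma:\Gamma\cap\Lambda]<\infty$ as well. Thus every uniform lattice is commensurable to $\Lambda$, giving a single commensurability class. Finally, for a non-uniform $\Gamma$ the last assertion is immediate from the earlier results: $G$ is second countable and amenable, Proposition~\ref{propo:SE} gives strong ergodicity of the transitive p.m.p.\ action on $G/\Gamma$, while Proposition~\ref{propo:WC} shows that for amenable $G$ cocompactness is equivalent to weak cocompactness, so a non-cocompact $\Gamma$ yields an action with strong ergodicity but no spectral gap. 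The main difficulty throughout is the covolume computation of Step~2, the one place where the summability hypothesis is used, together with the conjugacy-invariance needed to upgrade the support calculation to genuine non-commensurability.
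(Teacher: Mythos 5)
Your construction of the non-uniform lattices is correct and is in substance the same as the paper's: the cocycle graph $\Gamma=\{(\sigma(s),s):s\in S_0\}$ is exactly the paper's lattice $\Gamma_A$ with $A=\NN$, i.e.\ the direct sum of the twisted subgroups $S_n=\{(x-1,x):x\in\FF_{q_n}^*\}\leq G_n$, and your $\Gamma_I$ are the paper's $\Gamma_A$. Your covolume computation via the explicit fundamental domain $F$, and the criterion ``uniform iff finitely many $S$-double cosets'', are sound and replace the paper's restricted-product machinery (Proposition~\ref{prop:ConstrScheme}) by a direct calculation. The non-commensurability argument also goes through: conjugation by $(b,\tau)$ sends $(x,u)$ to $(b(1-u)+\tau x,u)$, so it only retwists coordinates (degenerating at the finitely many $n$ with $b_n=\tau_n$, which is harmless), and the support argument then matches the paper's use of the normal subgroups $\bigoplus_{n\notin A}T_n$. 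The final deduction from Propositions~\ref{propo:SE} and~\ref{propo:WC} is as in the paper.

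The genuine gap is in the classification of uniform lattices. You claim that for an arbitrary uniform lattice $\Gamma$ the intersection $\Gamma\cap\Lambda$ is cocompact in $\Lambda$ ``since $\Lambda$ is closed and normal and $G\to G/\Gamma$ is proper''. Both halves of this justification fail: the quotient map $G\to G/\Gamma$ is never proper for infinite discrete $\Gamma$ (the preimage of a point is an entire $\Gamma$-coset), and the general principle ``a uniform lattice meets a closed noncompact normal subgroup in a cocompact subgroup of it'' is false --- the paper's own first Example in the introduction (the graph of the integral Heisenberg group inside $H\times U$) is a cocompact lattice meeting the noncompact center trivially, and the second Example does the same with the derived group. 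So this step requires a real argument, and it is precisely where the paper works hardest: one first shows that cocompactness of $\Gamma$ forces the volumes $\vol(O_n/O_n\cap\Gamma)$ to stabilize, then applies the Orbit Counting Lemma to deduce that $O_n\cap\Gamma$ acts transitively on the affine line $\FF_{q_n}$ for all large $n$, then uses that $T_n$ is the \emph{unique} subgroup of order $q_n$ in $G_n\cong\FF_{q_n}\rtimes\FF_{q_n}^*$ to conclude $T_n\leq p_n(\Gamma\cap O_n)$, and finally passes through the finite-kernel projections $p_{>N}$ to obtain commensurability of $\Gamma$ with $\Lambda$. Your one-line substitute does not deliver any of this; as written, the ``single commensurability class of uniform lattices'' assertion remains unproved.
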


Alain Valette pointed out to us that the same group $G$ as in Theorem~\ref{theo:example} appears in a completely different context in Alain Robert's paper \cite{Robert} (see also Chapter~16 in \cite{Robert_book}). That paper provides interesting information on the unitary representations of $G$; it does not consider  non-uniform lattices. 

By construction, the group $\Lambda$ is a discrete abelian cocompact normal subgroup of $G$. Clearly discrete groups have property (M), as do abelian groups. Hence Theorem~\ref{theo:example} illustrates   the fact that a locally compact group $G$ may fail to have (M) even if it has a cocompact closed normal subgroup with property (M). One may think that understanding exhaustively the lattices in a cocompact normal subgroup of a locally compact group $G$ is sufficient to understand all lattices in $G$. This is actually far from being true; Theorem~\ref{theo:example} provides an illustration of that fact\footnote{\baselineskip=16pt
Another illustration is provided by the group $G = \SL_3\big(\FF_p(\!(t)\!) \big) \rtimes \Aut\big(\FF_p(\!(t)\!) \big)$. While lattices in the cocompact normal subgroup $\SL_3\big(\FF_p(\!(t)\!) \big)$ are classified via the Arithmeticity theorem, it is a notorious and intriguing open problem to find a lattice in $G$ with infinite image in the field automorphism group, see \cite[Annexe A, Problem 1]{CorHab}. As pointed out to us by Yves de Cornulier, further illustrations may also be found in the realm of connected solvable Lie groups: there exist pairs $H\leq G$ of connected solvable Lie groups such that $H$ is a cocompact closed normal subgroup of $G$, and $G$ contains lattices while $H$ does not contain any.}.

Despite of that negative result, it seems  reasonable to expect  that   Questions~\ref{ques:v1} and~\ref{ques:v2} have a positive answer provided the group $G$ satisfies suitable (and natural) additional conditions. We shall obtain a variety of results in that direction in \S\ref{sec:PositiveResults} below. Among those, we mention   the following.

\begin{thm}\label{theo:nil-by-nil}
Let $G$ be a locally compact group with a closed normal subgroup $N$ such that $N$ and $G/N$ are both nilpotent.

If $G$ is compactly generated, then $G$ has property (M).
\end{thm}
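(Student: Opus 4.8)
A nilpotent-by-nilpotent group is solvable, hence amenable, so Theorem~\ref{theo:reduction} applies: the cofinite subgroup $H\le G$ is cocompact if and only if its image has cocompact closure in $G/G^\circ$. Now $G/G^\circ$ is again compactly generated, totally disconnected, and nilpotent-by-nilpotent: take $\overline{NG^\circ}/G^\circ$ as the normal subgroup, which is nilpotent since closures of nilpotent subgroups are nilpotent (iterated commutators pass to closures), while $G/\overline{NG^\circ}$ is a quotient of $G/N$. So I may assume $G$ is totally disconnected and fix a compact open subgroup $U\le G$ by van Dantzig.

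\textbf{Induction on the nilpotency class of $N$.} If $N$ is trivial then $G$ is nilpotent and has property (M) by Proposition~\ref{prop:nilpotent}. Otherwise let $c\ge 1$ be the class of $N$ and set $M=\overline{\gamma_c(N)}$, the closure of the last nontrivial term of the lower central series. Then $M$ is a closed abelian subgroup, characteristic in $N$ hence normal in $G$, and central in $N$; in particular the conjugation action $G\to\Aut(M)$ kills $N$ and factors through the nilpotent group $G/N$. Since $N/M$ has class $<c$, the group $G/M$ is compactly generated, totally disconnected, and nilpotent-by-nilpotent, so the inductive hypothesis gives property (M) for $G/M$.

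\textbf{Reduction to a dense orbit, and the decisive claim.} Pushing the invariant probability measure forward along $G/H\to G/\overline{HM}$ shows that $\overline{HM}/M$ is cofinite in $G/M$, hence cocompact by induction; thus $\overline{HM}$ is cocompact in $G$. Replacing $G$ by the closed cocompact subgroup $\overline{HM}$, which remains compactly generated, totally disconnected, and nilpotent-by-nilpotent (with $M$ still central in $N\cap\overline{HM}$), I may assume $\overline{HM}=G$, i.e. the $M$-orbit $MH/H$ is dense in $G/H$. I claim it then suffices to prove that $M\cap H$ is cofinite in $M$. Indeed, $M$ is abelian and abelian groups have property (M), so $M\cap H$ would be cocompact in $M$; then the orbit $MH/H\cong M/(M\cap H)$ is compact, hence closed in $G/H$, and being dense it must equal all of $G/H$, which is therefore compact.

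\textbf{The crux: $M\cap H$ is cofinite in $M$.} This is the only point where compact generation enters essentially, and it must fail without it: the group of Theorem~\ref{theo:example} realizes precisely a dense, non-closed $M$-orbit (the $\Lambda$-orbit) in a non-compactly-generated group, with $M\cap H$ \emph{not} cofinite in $M$. The mechanism I would use is that, $G$ and $G/M$ being compactly generated, the abelian normal subgroup $M$ is compactly normally generated, i.e. topologically finitely generated as a $\ZZ[G/N]$-module through the conjugation action factoring (as noted above) through the nilpotent group $G/N$. One then invokes the Noetherian-type finiteness properties of group rings of finitely generated nilpotent groups to exclude a dense, non-closed orbit supporting a finite invariant measure: the finite invariant measure on $G/H$ forces $M\cap H$ to be a cofinite, hence cocompact, submodule of $M$. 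Making this rigidity precise — transporting the measure-theoretic finiteness across the topological $\ZZ[G/N]$-module $M$ and ruling out the irrational-rotation type behaviour that the counterexample exhibits — is the main obstacle and the technical heart of the argument. A partial handle is that each double coset set $UgH/H$ is clopen of positive measure (the invariant measure has full support on the transitive space $G/H$) and that density gives $HMU=G$; but the genuine difficulty is the module-theoretic rigidity forced by compact generation together with nilpotence.
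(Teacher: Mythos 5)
Your reduction skeleton (passing to $G/G^\circ$ via Theorem~\ref{theo:reduction}, inducting on the nilpotency class of $N$ through a closed central-in-$N$ normal subgroup, and replacing $G$ by $\overline{HM}$) is sound and closely parallels the paper's own induction step, which uses the center $Z$ of $N$ instead of $\overline{\gamma_c(N)}$. The fatal problem is the claim to which you reduce everything. After arranging $\overline{HM}=G$, it is \emph{false} that $M\cap H$ must be cofinite in $M$, even in the presence of compact generation. The simplest counterexample is $G=\ZZ\times\ZZ_p$ with $N=M=\ZZ\times\{0\}$ and $H=\{(n,\iota(n)):n\in\ZZ\}$, where $\iota\colon\ZZ\to\ZZ_p$ is the canonical dense embedding: here $G$ is compactly generated, totally disconnected and abelian (so certainly nilpotent-by-nilpotent), $H$ is a cocompact lattice, and $\overline{HM}=G$ because $\iota(\ZZ)$ is dense in $\ZZ_p$; yet $M\cap H$ is trivial while $M\cong\ZZ$ is non-compact, so $M\cap H$ is not cofinite in $M$, and the $M$-orbit of the base point in $G/H\cong\ZZ_p$ is dense and non-closed. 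A less degenerate counterexample is the paper's own example $G=D_\infty\times U$ with the cocompact lattice $\Gamma=\langle(t_1,t_2)\rangle$ and $N=\overline{[G,G]}\cong\ZZ\times\prod_\ZZ C_2$: after performing your own reduction (replace $G$ by $\overline{\Gamma N}$), all your hypotheses hold, $\Gamma$ is cocompact, and still $\Gamma\cap N$ is trivial with $N$ non-compact. So compact generation does \emph{not} exclude a dense non-closed $M$-orbit with non-cofinite stabilizer inside a finite-measure homogeneous space; your decisive claim cannot be proved because it is not true, and you yourself flagged it as the unproven ``technical heart'' of the argument.

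There is also a secondary gap in the sketch of the mechanism: compact generation of $G$ and of $G/M$ does not imply that $M$ is compactly generated as a normal subgroup. By Abels' result (Proposition~\ref{prop:CompactlyPresented:DiscreteExtension}) one needs $G/M$ to be compactly \emph{presented}, and a compactly generated nilpotent-by-nilpotent group need not be compactly presented (finitely generated metabelian groups such as lamplighters already fail the discrete analogue). The paper has this tool available only at stages where the relevant quotient is nilpotent, via Proposition~\ref{prop:Nilpotent->CptlyPres}. What the paper actually does at the crux is quite different from cofiniteness of $M\cap H$ in $M$: in the abelian base case it divides out the normal closure $B$ of $U\cap N$ for a compact open subgroup $U$, using Lemma~\ref{lem:BddIndex} (a bounded-index/normal-closure argument) to know that $H$ is cocompact in $\overline{HB}$, then combines compact presentability of $G/N$, exponent bounds, and local ellipticity, with a further application of Lemma~\ref{lem:BddIndex}; in the induction step it uses density of $HZ$ to show that $H\cap N$ is \emph{normal} in $G$ and divides it out, reducing to the abelian-by-nilpotent case. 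Dividing out $H\cap N$ is exactly how the paper sidesteps the fact that $H\cap N$ need not be cofinite in $N$.
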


In particular, compactly generated metabelian locally compact groups have property (M).  The hypothesis of compact generation is critical: the example from Theorem~\ref{theo:example} shows that the conclusion of the theorem may fail if one removes it.



\section{Strong ergodicity and spectral gap}

The goal of this section is to prove Proposition~\ref{propo:WC} and Proposition~\ref{propo:SE}.

\subsection{Strong ergodicity of transitive actions}\label{sec:SE}

Let $G$ be a locally compact group and $H$ be a closed subgroup and $\mu_H$ be a left Haar mesure on $H$. We recall from \cite[Corollary B.1.7]{BHV} that if $G/H$ carries some $G$-invariant measure $m$, then that measure is unique up to a scaling factor. Once $m$ is fixed, there is a unique left Haar measure $\mu _G$ on $G$ such that 
	$$\int_G f(x) d\mu_G(x) = \int_{G/H} \int_H f(xh)d\mu_H dm(xH)$$
	for every continuous compactly supported function $f$ on $G$. Similarly, if $\mu_G$ is fixed then there is a unique choice of $m$ so that the equation above holds. If this is the case, we shall say that the measures $\mu_G, \mu_H$ and $m$ satisfy the \textbf{standard normalization}.

An ergodic action on a finite probability space is always strongly ergodic, so we may ignore this case. For an infinite probability space $\Omega$
it is easy to see that an ergodic action is strongly ergodic if and only if there is a compact subset $K\subset G$ and $\gep>0$ with respect to which there is no $(K,\gep)$-invariant subsets of $\gO$ of measure $1/2$.


\begin{lem} \label{lem:haarnorm}
	Let $G$ be a locally compact group and $H\le G$ a cofinite subgroup. Let $\mu_G$ and $\mu_H$ be left Haar measures on $G$ and $H$.  
	Denote by $x_0\in G/H$ the base point, so that $H=\mathrm{Stab}_G(x_0)$.
	Let $m$ denote the $G$-invariant probability measure on $G/H$, and assume that $\mu_G, \mu_H$ and $m$ satisfy the standard normalization.  Then 
	we have
	$$
	\mu_G(K)=\int_{K\cdot x_0}\mu_H( H \cap x^{-1}K)dm(xH).
	$$
\end{lem}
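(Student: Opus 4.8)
The plan is to apply the integration formula defining the standard normalization (Weil's formula) to the indicator function of $K$, and then to identify each side with the quantities appearing in the statement. Concretely, I would substitute $f = \mathbf{1}_K$ into the normalization identity
$$\int_G f(x)\,d\mu_G(x) = \int_{G/H} \int_H f(xh)\,d\mu_H(h)\,dm(xH),$$
whose left-hand side is then by definition $\mu_G(K)$. For the inner integral on the right, fix a representative $x$ of a coset $xH$; since $\mathbf{1}_K(xh)=1$ precisely when $xh\in K$, i.e.\ when $h\in x^{-1}K$, and $h$ ranges over $H$, I obtain
$$\int_H \mathbf{1}_K(xh)\,d\mu_H(h) = \mu_H(H \cap x^{-1}K).$$

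Next I would check the two points needed to make the outer integral meaningful and to cut it down to $K\cdot x_0$. First, well-definedness on $G/H$: replacing $x$ by $xh_0$ with $h_0\in H$ gives $H \cap (xh_0)^{-1}K = h_0^{-1}(H\cap x^{-1}K)$, using $h_0^{-1}H = H$, so by left-invariance of $\mu_H$ the value $\mu_H(H\cap x^{-1}K)$ depends only on the coset $xH$. Second, the integrand is supported on $K\cdot x_0$: if $xH\notin K\cdot x_0$, then no element of the coset $xH$ lies in $K$, hence $H\cap x^{-1}K = \emptyset$ and the inner integral vanishes. Therefore the integral over $G/H$ collapses to an integral over $K\cdot x_0$, yielding the asserted identity.

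The step I expect to be the main obstacle is purely technical: the normalization formula is stated for $f\in C_c(G)$, whereas $\mathbf{1}_K$ is only upper semicontinuous (for $K$ compact), or merely Borel in general, so the identity must first be extended to $f=\mathbf{1}_K$. I would handle this by a standard uniqueness/approximation argument: both sides of Weil's formula define Radon measures on $G$ that agree on $C_c(G)$, hence agree on all Borel sets; alternatively, one approximates $\mathbf{1}_K$ monotonically by elements of $C_c(G)$ via Urysohn's lemma and passes to the limit by monotone (or dominated) convergence. The same limiting procedure simultaneously delivers the measurability of the map $xH\mapsto \mu_H(H\cap x^{-1}K)$ on $G/H$, which is what allows the right-hand integral to be written in the first place.
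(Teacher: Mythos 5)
Your proposal is correct and follows essentially the same route as the paper: apply the standard-normalization (Weil) formula to the indicator function $\mathcal{X}_K$, identify the inner integral as $\mu_H(H\cap x^{-1}K)$, and restrict the outer integral to $K\cdot x_0$ since the integrand vanishes off that set. The only difference is that you spell out the technical extension from $C_c(G)$ to indicator functions (and the coset well-definedness), which the paper applies implicitly; this is a welcome but routine addition, not a change of method.
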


\begin{proof}
By the choice of the Haar measure $\mu_G$, we have 
$$\begin{array}{rcl}
\mu_G(K) = \int_G \mathcal X_K(x) d\mu_G(x) 
&=& \int_{G/H} \int_H \mathcal X_K(xh)d\mu_H dm(xH)\\
& = & \int_{G/H}\mu_H(\{h \in H \mid h \in x^{-1}K\})dm(xH)\\
& = & \int_{G/H}\mu_H( H \cap x^{-1}K)dm(xH)\\
& = & \int_{K\cdot x_0}\mu_H( H \cap x^{-1}K)dm(xH)
\end{array}$$
as required. 
\end{proof}

\begin{proof}[Proof of Proposition \ref{propo:SE}]
	Let $m$ denote the $G$-invariant probability measure on $\Omega = G/H$. 
Choose a compact identity neighbourhood $O\subset H$ and let $\mu_H$ be the left Haar measure on $H$ such that $\mu_H(O)=1$.
Let $\mu_G$ be the left Haar measure on $G$ such that $\mu_G, \mu_H$ and $m$ satisfy the standard normalization.

	Denote by $x_0\in G/H$ the base point, so that $H=\mathrm{Stab}_G(x_0)$,
and fix a compact set $F\subset G$ such that the measure $m$ of $\mathcal{F}:=F\cdot x_0$ is at least $0.9$.
Set 
	$$
	K:=FOF^{-1}.
	$$
Let $\delta=\text{min}\{ \Delta_G(f^{-1}):~f\in F\}$ where $\Delta_G$ is the modular character of $G$.
We claim that for any $y \in \mathcal F$ and any measurable set $A\subseteq \mathcal{F}$, we  have
	$$
	\mu_G(\{k\in K \mid k\cdot y\in A\})\ge \gd\cdot m(A).
	$$
In order to prove the claim, it suffices to deal with the case where $A$ is compact, since a general $A$ can be approximated by compact sets by regularity of the measure. So we assume that $A$ is compact. We choose $f \in F$ with $f\cdot x_0 = y$. We have 
$$\{k\in K \mid k\cdot y\in A\} \supseteq \{q \in FO \mid q\cdot x_0 \in A\}f^{-1}.$$
Therefore it suffices to show that $\mu_G( \{q \in FO \mid q\cdot x_0 \in A\} ) \geq m(A)$.
By Lemma~\ref{lem:haarnorm}, we have 
$$\begin{array}{rcl}
\mu_G(\{q \in FO \mid q\cdot x_0 \in A\}) & = &  \int_A \mu_H(H \cap x^{-1} FO) dm(xH)\\
& \geq & \int_A dm = m(A)
\end{array}
$$
since $H \cap x^{-1} FO \supseteq O$ for all $x \in A \subseteq F$. This proves the claim.

Suppose by way of contradiction that there is $B=B_\gep\subset G/H$ of measure $1/2$ which is $(K,\gep)$-invariant for some positive  $\gep < 0.36\cdot\gd/\mu_G(K)$. Note that both $m(\mathcal{F}\cap B)$ and $m(\mathcal{F}\setminus B)$ are at least $0.4$. Using the claim, it follows that
	$$
	0.36\cdot\gd=0.9\cdot 0.4\cdot\gd\le
	\int_\mathcal{F}\int_K |\mathcal{X}_B(k\cdot x)-\mathcal{X}_B(x)|d\mu_G(k)dm(x)\le
	$$
	$$
	\int_{\gO}\int_K |\mathcal{X}_B(k\cdot x)-\mathcal{X}_B(x)|d\mu_G(k)dm(x)=
	$$
	$$
	\int_K\int_{\gO} |\mathcal{X}_B(k\cdot x)-\mathcal{X}_B(x)|dm(x)d\mu_G(k)\le\gep\mu_G(K).
	$$
This contradicts the fact that $B$ is $(K, \varepsilon)$-invariant.
\end{proof}

\subsection{Spectral gap and weak cocompactness}

%
For general $\sigma$-compact groups, Margulis showed that a cocompact lattice in $G$ is always weakly cocompact. More generally, he proved the following (see \cite[Lemma~III.1.9]{Margulis}).

\begin{lem}\label{lem:Margulis}
Let $G$ be a second countable locally compact group. A closed subgroup $H < G$ of finite covolume is not weakly cocompact if and only if there is a sequence $(q_i)_{i \geq0}$ of almost invariant vectors in $L_2(G/H)$ such that $\inf_i \| q_i \|_2 >0$ and for each compact subset $K \subset G/H$, we have $lim_{i \to \infty} \int_K | q_i(x) |^2  dx = 0$. \qed
\end{lem}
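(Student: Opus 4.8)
The plan is to read weak cocompactness as a spectral gap statement for the unitary $G$-representation $\pi$ on $L^2(G/H,m)$ defined by $(\pi(g)\xi)(x)=\xi(g^{-1}x)$, where $m$ is the invariant probability measure. Since the action is transitive, hence ergodic, the constant function $\mathbf{1}$ spans the line of $\pi(G)$-invariant vectors, and $L^2_0(G/H)$ is its orthogonal complement; by definition $H$ fails to be weakly cocompact exactly when $\pi|_{L^2_0}$ admits almost invariant vectors. I would prove the two implications separately, the main engine being that convolution by a compactly supported function is \emph{locally compact} as an operator, hence kills weakly null sequences on every compact set.

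For the forward direction, suppose $H$ is not weakly cocompact and pick unit vectors $\xi_i\in L^2_0(G/H)$ with $\sup_{g\in K}\|\pi(g)\xi_i-\xi_i\|_2\to 0$ for every compact $K\subset G$. First I would show $\xi_i\rightharpoonup 0$ weakly: any weak cluster point $\xi_\infty$ is $\pi(G)$-invariant (for fixed $g$, $\pi(g)\xi_i-\xi_i\to 0$ in norm and $\pi(g)$ preserves weak limits), hence constant by ergodicity; but $L^2_0$ is weakly closed and contains no nonzero constant, so $\xi_\infty=0$, and a bounded sequence whose only weak cluster point is $0$ converges weakly to $0$. Next fix $\phi\in C_c(G)$ with $\phi\ge 0$, $\int_G\phi\,d\mu_G=1$, and put $\pi(\phi)=\int_G\phi(g)\pi(g)\,d\mu_G(g)$. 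Almost invariance gives
\[
\|\pi(\phi)\xi_i-\xi_i\|_2\le\int_G\phi(g)\,\|\pi(g)\xi_i-\xi_i\|_2\,d\mu_G(g)\le\sup_{g\in\operatorname{supp}\phi}\|\pi(g)\xi_i-\xi_i\|_2\longrightarrow 0.
\]
The crux is that for each compact $C\subset G/H$ the operator $\xi\mapsto\mathbf{1}_C\cdot\pi(\phi)\xi$ is Hilbert--Schmidt: unwinding $\pi(\phi)\xi(sH)=\int_G\phi(g)\xi(g^{-1}sH)\,d\mu_G(g)$ with the substitution $g=st^{-1}$ and disintegrating along $H$ via the standard normalization exhibits it as an integral operator with kernel $k(sH,tH)=\int_H\phi(sh^{-1}t^{-1})\Delta_G(th)^{-1}\,d\mu_H(h)$, which is continuous and, for $sH\in C$, supported in a fixed compact set of $y=tH$ because $\operatorname{supp}\phi$ is compact; hence $k$ is square-integrable on $C\times(G/H)$. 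A Hilbert--Schmidt operator is compact, so it sends the weakly null $(\xi_i)$ to a norm-null sequence, giving $\int_C|\pi(\phi)\xi_i|^2\,dm\to 0$; combined with the previous display this yields $\int_C|\xi_i|^2\,dm\to 0$. Thus $q_i:=\xi_i$ are almost invariant, satisfy $\inf_i\|q_i\|_2=1$, and escape every compact set.

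For the converse, suppose such a sequence $(q_i)$ exists. After dividing by $\|q_i\|_2$ (legitimate since $\inf_i\|q_i\|_2>0$, and preserving both almost invariance and the escape property) I may assume $\|q_i\|_2=1$. Taking $C=G/H$ shows $G/H$ is noncompact, so by inner regularity of $m$ there are compact sets of measure arbitrarily close to $1$. Splitting $\int_{G/H}q_i\,dm$ over such a $C$ and its complement and applying Cauchy--Schwarz together with the escape property gives $\langle q_i,\mathbf{1}\rangle=\int q_i\,dm\to 0$. Then $q_i^0:=q_i-\langle q_i,\mathbf{1}\rangle\mathbf{1}$ lies in $L^2_0$, remains almost invariant (the orthogonal projection onto $L^2_0$ commutes with $\pi$ because $\mathbf{1}$ is invariant), and satisfies $\|q_i^0\|_2^2=1-|\langle q_i,\mathbf{1}\rangle|^2\to 1$. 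Hence $\pi|_{L^2_0}$ has almost invariant vectors, i.e.\ $H$ is not weakly cocompact.

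The step I expect to be the real obstacle is the Hilbert--Schmidt (equivalently, local compactness) property of $\mathbf{1}_C\,\pi(\phi)$: it requires carrying out the kernel computation carefully, tracking the modular function $\Delta_G$ and the disintegration of $\mu_G$ along $H$, and then checking the local support and square-integrability of $k$ using only compactness of $\operatorname{supp}\phi$ and finiteness of the covolume. Everything else is soft functional analysis (weak compactness, ergodicity, inner regularity of $m$). If one prefers to avoid the explicit kernel, an alternative is to observe that $\mathbf{1}_C\,\pi(\phi)$ factors through the norm-continuous, compactly supported operator field $g\mapsto\mathbf{1}_C\,\pi(g)$ and to approximate it in operator norm by finite-rank operators, trading the modular bookkeeping for an approximation argument.
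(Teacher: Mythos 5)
First, a point of comparison: the paper does not prove Lemma~\ref{lem:Margulis} at all --- it is stated with a closing box and attributed to Margulis (Lemma~III.1.9 of his book) --- so there is no internal proof to measure your argument against; what follows assesses your proof on its own terms.

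Your proof is correct, and it is in essence the classical argument underlying the cited statement. In the forward direction both pillars hold up. (a)~Weak nullity: the transitive action is ergodic (this follows from uniqueness of the invariant probability measure, \cite[Corollary~B.1.7]{BHV}, and is also a very special case of Proposition~\ref{propo:SE}), $L^2_0$ is weakly closed, and unitaries preserve weak limits, so every weak cluster point of your almost invariant sequence is an invariant vector in $L^2_0$, hence zero. (b)~Compactness of $\mathbf{1}_C\,\pi(\phi)$: your kernel formula is exactly right. Since $m$ is invariant, $(\pi(g)\xi)(x)=\xi(g^{-1}x)$ is unitary even when $G$ is not unimodular --- and finite covolume does \emph{not} force unimodularity of $G$ (take $G$ the product of the real affine group with a nontrivial finite group and $H$ the affine factor), so the modular factor is genuinely needed. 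Performing the inversion, the right translation by $s^{-1}$, and then Weil's formula in the standard normalization --- which applies because the existence of $m$ forces $\Delta_G|_H=\Delta_H$ --- yields precisely $k(sH,tH)=\int_H\phi(sh^{-1}t^{-1})\Delta_G(th)^{-1}\,d\mu_H(h)$, and the same identity $\Delta_G|_H=\Delta_H$ is what makes this expression independent of the chosen representatives. For $sH\in C$ the kernel vanishes unless $tH$ lies in the compact image of $(\mathrm{supp}\,\phi)^{-1}\tilde C$, where $\tilde C$ is a compact lift of $C$ (such lifts always exist); moreover the $h$-integration then runs over a fixed compact subset of $H$, so $k$ is bounded there, and square-integrability against the finite measure $m\times m$ follows. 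Hence $\mathbf{1}_C\,\pi(\phi)$ is Hilbert--Schmidt and your escape-of-mass conclusion follows. The converse is soft and correct as written; $\sigma$-compactness of $G/H$ even spares you the appeal to inner regularity. The only remaining debts are routine: a Fubini argument identifying the weak integral $\pi(\phi)\xi$ with the pointwise integral operator, and the extension of Weil's formula from $C_c(G)$ to the integrands at hand --- both standard, and both correctly flagged by you as the technical core rather than hidden.
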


%

Thus, to complete the proof of Proposition~\ref{propo:WC}, it suffices to establish the following.

\begin{prop}\label{prop:SpectralGap}
Let $G$ be a second countable locally compact group, and $X$ be a $\sigma$-compact locally compact space. Assume that $G$ acts continuously on $X$, fixing a probability measure $m$ of full support in $X$.

If $G$ is amenable and $X$ is not compact, then the unitary representation of $G$ on $L_2^0(X, m)$ does not have the spectral gap.
\end{prop}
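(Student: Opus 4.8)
The plan is to prove the statement directly: assuming $G$ amenable and $X$ noncompact, I will exhibit a sequence of almost invariant unit vectors in $L_2^0(X,m)$, which is exactly the failure of the spectral gap. First I would reduce everything to a purely dynamical assertion about approximately invariant sets of small measure near infinity. For a measurable set $D\subseteq X$ with $0<m(D)<1$, set $\xi_D=(\chi_D-m(D)\mathbf 1)/\sqrt{m(D)(1-m(D))}$; this is a unit vector of $L_2^0(X,m)$, and since $\mathbf 1$ is $G$-invariant a direct computation gives $\|\pi(g)\xi_D-\xi_D\|_2^2=m(gD\,\Delta\,D)/(m(D)(1-m(D)))$ for every $g\in G$. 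Hence it suffices to produce measurable sets $D_n$ with $m(D_n)\to 0$ and $m(g D_n\,\Delta\,D_n)/m(D_n)\to 0$ uniformly for $g$ in each compact subset of $G$; I shall call such $D_n$ approximately invariant tail sets. The requirement $m(D_n)\to 0$ (rather than merely $m(D_n)$ bounded away from $0$ and $1$) is forced on us, since by Proposition~\ref{propo:SE} a continuous transitive p.m.p.\ action is strongly ergodic, so no approximately invariant set of intermediate measure can exist.

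Next I would build candidate tail sets by combining $\sigma$-compactness with amenability. Fix a compact set $Q\subseteq G$ and $\varepsilon>0$; covering $Q$ by finitely many translates of a small identity neighbourhood reduces matters to controlling $\|\pi(g_i)\xi-\xi\|_2$ for finitely many $g_1,\ldots,g_k$. Write $X=\bigcup_j K_j$ with $K_j$ compact and increasing; since $X$ is noncompact and $m$ has full support, the tails $X\setminus K_j$ have positive measure tending to $0$. Using amenability in the form of Reiter's property, choose a nonnegative $\phi\in L^1(G)$ with $\|\phi\|_1=1$, compact support $P$, and $\max_i\|\lambda(g_i)\phi-\phi\|_1$ arbitrarily small. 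For a tail seed $D=X\setminus K_j$ form the averaged function $\Phi(x)=\int_G\phi(g)\chi_{gD}(x)\,dg$; then $0\le\Phi\le 1$, $\int_X\Phi\,dm=m(D)$ by invariance of $m$, and $\|\pi(g_i)\Phi-\Phi\|_1\le m(D)\,\|\lambda(g_i)\phi-\phi\|_1$. Writing $B_t=\{\Phi>t\}$, the nesting of the level sets yields the layer-cake identity $\|\pi(g_i)\Phi-\Phi\|_1=\int_0^1 m(g_iB_t\,\Delta\,B_t)\,dt$, together with $\int_0^1 m(B_t)\,dt=m(D)$. A pigeonhole over $t$, weighted by $m(B_t)\,dt$, then produces a single level $t^\ast$ with $m(B_{t^\ast})>0$ and $\sum_i m(g_iB_{t^\ast}\,\Delta\,B_{t^\ast})\le\kappa\,m(B_{t^\ast})$ for any prescribed $\kappa>0$, provided $\phi$ was chosen invariant enough. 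Setting $D_n=B_{t^\ast}$ then gives, via the formula of the first paragraph, almost invariant unit vectors $\xi_{D_n}$, \emph{as soon as one also knows that $m(D_n)$ stays bounded away from $1$ and tends to $0$}.

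The step I expect to be the main obstacle is exactly guaranteeing this last smallness, equivalently that the average $\Phi$ does not flatten to the constant $m(D)\mathbf 1$. Flattening is the generic behaviour of a Reiter average against an ergodic action, since by the mean ergodic theorem $\pi(\phi)f\to(\int f)\mathbf 1$; when it occurs, all level sets $B_t$ are essentially trivial and the pigeonhole yields nothing in $L_2^0(X,m)$. It is precisely here that noncompactness must enter in an essential way: one has the support bound $B_t\subseteq\mathrm{supp}\,\Phi\subseteq P\cdot D$, so if the seed can be chosen deep enough near infinity that its $P$-translates do \emph{not} expand, namely $m(PD)\le C_0\,m(D)$ with $C_0$ independent of how deep $D$ sits, then automatically $m(B_t)\le C_0\,m(D)\to 0$ for every $t>0$, the flattening is excluded, and the construction closes. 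Thus the technical heart is a Følner-type non-expansion property for the action at infinity.

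To establish this non-expansion I would argue by contradiction from amenability: a uniform expansion $m(PD)\ge(1+\delta)m(D)$ valid for all sufficiently deep tails $D$ could be iterated, forcing the finite measure $m$ to saturate $X$ at a uniform exponential rate starting from arbitrarily small tails, which I expect to contradict the existence of an invariant mean (equivalently, the genuine non-expanding Følner/Reiter sets that amenability supplies in $G$ itself, transported through the orbit map). A cleaner alternative to this crux would be to invoke the Ornstein--Weiss theory of Følner tilings for p.m.p.\ actions of amenable groups, or, in the transitive case $X=G/H$ relevant to Proposition~\ref{propo:WC}, to mirror directly the escaping-to-infinity construction of almost invariant vectors underlying Lemma~\ref{lem:Margulis}. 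Either way, once the non-expanding tail sets are in hand, the reduction and the averaging-plus-pigeonhole argument above complete the proof.
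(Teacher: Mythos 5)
Your reduction in the first paragraph is fine, and your averaging/layer-cake/pigeonhole machinery would indeed work \emph{if} you could bound the measure of the level sets away from $1$. But the step you yourself flag as the ``technical heart'' --- the non-expansion property $m(PD)\le C_0\,m(D)$ for arbitrarily deep tails $D=X\setminus K_j$ --- is a genuine gap, and neither of your proposed routes closes it. The contradiction sketch fails: iterating $m(PD)\ge(1+\delta)m(D)$ from a tail of tiny measure merely says the iterates $P^kD$ grow until their measure is of order $1$, which is perfectly consistent with $m$ being a probability measure and with $G$ being amenable; amenability supplies F\o lner sets \emph{in $G$}, and nothing transports them to F\o lner-type sets \emph{in $X$} near infinity (translates $g(X\setminus K_j)=X\setminus gK_j$ can spread out so that $\bigcap_{g\in P}gK_j$ is much smaller than $K_j$, making $m(PD)$ close to $1$ while $m(D)$ is tiny). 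The Ornstein--Weiss alternative is also not available off the shelf here: it concerns free p.m.p.\ actions of countable discrete amenable groups and produces Rokhlin towers, not small approximately invariant sets at infinity for a general second countable locally compact $G$.

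The missing idea --- and the key point of the paper's proof --- is to abandon full tails as seeds and instead use a \emph{small compact set of positive measure located deep enough at infinity}. Fix a F\o lner set $F_n\subset G$ that is $(K_n,\varepsilon_n)$-invariant and a compact exhaustion $(X_n)$ of $X$; since $X$ is noncompact and $m$ has full support, inner regularity yields a compact $A_n\subset X\setminus F_n^{-1}X_n$ with $m(A_n)>0$. Setting $\psi_n(x)=\int_{F_n}\chi_{h\cdot A_n}(x)\,d\mu(h)$, invariance of $m$ and Fubini give the \emph{exact} identity $\|\psi_n\|_1=m(A_n)\mu(F_n)$ and the bound $\|\psi_n-g.\psi_n\|_1\le m(A_n)\mu(F_n\,\Delta\,gF_n)<\varepsilon_n\|\psi_n\|_1$ for $g\in K_n$: the F\o lner property of $F_n$ in $G$ transfers directly to multiplicative $L_1$-almost invariance of $\psi_n$, with no control whatsoever on how much $F_nA_n$ expands relative to $A_n$. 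Crucially, $\mathrm{supp}\,\psi_n\subseteq F_nA_n\subseteq X\setminus X_n$, a set of measure tending to $0$, which is exactly the ``non-flattening'' you needed --- obtained by placement of the seed, not by an expansion estimate. From there the paper takes $f_n=\sqrt{\psi_n/\|\psi_n\|_1}$ (the Mazur map converts $L_1$- into $L_2$-almost invariance, by \cite{BL}), and the Cauchy--Schwarz bound $\int_X f_n\,dm\le m(X\setminus X_n)^{1/2}\to 0$ shows $f_n$ is asymptotically in $L_2^0(X,m)$; alternatively, with the support bound in hand, your own pigeonhole over level sets would also finish, since every $B_t\subseteq X\setminus X_n$ now has measure tending to $0$. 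So the architecture of your proposal is salvageable, but as written it stalls at an unproved (and in general false) claim precisely where the actual proof makes its one essential move.
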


\begin{proof}
Let $(K_n)$ be a compact exhaustion of $G$, let $(\vareps_n)$ be a decreasing sequence of positive reals tending to $0$, and $(F_n)$ be a sequence of $(K_n, \vareps_n)$-invariant compact subsets of $G$ witnessing the amenability of $G$ (it is always possible to find F\o lner sets that are compact, by truncating if necessary).

Let also $(X_n)$ be a compact exhaustion of $X$. For each $n$, the set $F_n^{-1} X_n$ is compact since the $G$-action is continuous. If $X$ is not compact, there is a compact set of positive measure $A_n$ contained in $X \setminus F_n^{-1} X_n$. We set $\psi_n \colon X \to \RR$ to be the convolution $F_n\ast \mathcal{X}_{A_n}$, where $\mathcal X_{A_n}$ denotes the characteristic function of $A_n$. In other words, we have  
$$
 \psi_n(x)= \int_{F_n} \mathcal{X}_{h\cdot A_n}(x)d \mu (h), 
$$
where $\mu$ denotes a left Haar measure on $G$. We have
$$
\| \psi_n \|_1 =\int_{X} \int_{F_n} \mathcal{X}_{h\cdot A_n}(x)d \mu (h)d m(x) = \int_{F_n}  m(h\cdot A_n)d \mu (h) = m(A_n) \mu (F_n).
$$
Moreover, the support of $\psi_n$ is contained in $F_n A_n$, which lies entirely in the complement of $X_n$ by the definition of $A_n$.

Furthermore, for $g \in K_n$ we find
$$
\begin{array}{rcl}
\|  \psi_n - g. \psi_n \|_1  & =&  \int_{X} \big  |\psi_n(x) - \psi_n(g\inv.x) \big  | d m(x)  \\
& = & \int_{X} \big| \int_{F_n} \mathcal{X}_{h\cdot A_n}(x) \mathrm  d \mu (h)  -  \int_{F_n} \mathcal{X}_{gh\cdot A_n}(x)d \mu (h) \big | d m(x) \\
&  =  &  \int_{X} \big| \int_{F_n} \mathcal{X}_{h\cdot A_n}(x) \mathrm  d \mu (h)  -  \int_{gF_n} \mathcal{X}_{h\cdot A_n}(x)d \mu (h) \big | d m(x) \\
&  =  &  \int_{X} \big| \int_{F_n - gF_n} \mathcal{X}_{h\cdot A_n}(x) \mathrm  d \mu (h)  -  \int_{gF_n- F_n} \mathcal{X}_{h\cdot A_n}(x)d \mu (h) \big | d m(x) \\
&  \leq  &  \int_{X}  \int_{F_n \Delta gF_n} \mathcal{X}_{h\cdot A_n}(x) \mathrm  d \mu (h) d m(x) \\
& = & \int_{F_n \Delta gF_n}   m(h\cdot A_n)   \mathrm  d \mu (h)\\
& = & m(A_n) \mu (F_n \Delta gF_n) \\
& < & \vareps_n m(A_n) \mu (F_n) \\
& = & \vareps_n \| \psi_n \|_1.
\end{array}
$$
Thus the  map $\frac {\psi_n} { \| \psi_n \|_1}$ has norm one and is $(K_n, \vareps_n)$-almost invariant for the $L_1$-norm. Therefore, by \cite[Th.~9.1]{BL}, there is another sequence $(\vareps'_n)$ of positive reals tending to $0$ such that the map
$$f_n = \sqrt{\frac {\psi_n} { \| \psi_n \|_1}}$$
 is $(K_n, \vareps'_n)$-almost invariant for the $L_2$-norm. Notice that  $\| f_n \|_2 = 1$ by construction. 
  
Finally, let  $\tilde f_n = f_n - \int_X f_n dm$ denote the projection of $f_n$ onto $L_2^0(X)$. Since the support of $f_n$ lies entirely in the complement of $X_n$, we have
 $$\begin{array}{rcl}
 \int_X f_n   d m= \int_{X - X_n} f_n dm  =   \langle 1_{X - X_n}, f_n\rangle \le \| 1_{X - X_n}\|_2\cdot\| f_n\|_2=m(X - X_n)^{\frac{1}{2}}
 \end{array}$$
 by the Cauchy--Schwarz inequality. Therefore $\| \tilde f_n - f_n \|_2 \to 0$ as $n$ tends to infinity. This implies that $(\tilde f_n/\| \tilde f_n\|_2)$  is  a sequence of  almost invariant vectors of norm one in $L_2^0(X)$, as desired.
\end{proof}


\section{Exotic examples}

The goal of this section is to prove Theorem~\ref{theo:example}. This will be obtained by specializing a general construction, described in the first subsection below.

\subsection{Lattices in restricted products}\label{sec:RestricedProduct}

Let $(G_n)_{n \geq 0}$ be a sequence of locally compact groups. For each $n \geq 0$, let $U_n < G_n$ be a compact open subgroup. To the sequence $(G_n, U_n)_{n \geq 0}$, one associates the \textbf{restricted product}, defined as follows:
$$\prod_n{}' (G_n, U_n) = \{(g_n) \in \prod_{n \geq 0} G_n \; | \; g_n \in U_n \text{ for all but finitely many  $n$'s}\}.$$
The group $G = \prod'_n (G_n, U_n) $ commensurates its subgroup $U =  \prod_n U_n $, which is compact. Therefore $G$  carries a unique locally compact group topology that makes $U$ a compact open subgroup.

We now describe how one can construct lattices in the restricted product $G$ from lattices in the various factors $G_n$. For each $n \geq 0$, let $\Gamma_n$ be a subgroup of $G_n$ intersecting  $U_n$ trivially. In particular $\Gamma_n$ is discrete in $G_n$. Assume that $\Gamma_n$ is of finite covolume, and let $c_n$ denote that covolume with respect to the normalization of the Haar measure on $G_n$ that gives measures $1$ to the compact open subgroup $U_n$. Then we have the following.

\begin{prop}\label{prop:ConstrScheme}
Retain the notation as above.
\begin{enumerate}[(i)]
\item $\Gamma = \bigoplus_n \Gamma_n$ is a discrete subgroup of $G$.

\item $\Gamma$ is a lattice in $G$ if and only if the sequence $C_k = \prod_{n=0}^k c_n$ converges to a  finite limit as $k$ tends to infinity.

\item $\Gamma$ is a uniform lattice in $G$ if and only if $\Gamma_n$  is uniform in $G_n$ for all $n \geq 0$ and  there is some $N$ such that $c_n=1$ for all $n \geq N$.
\end{enumerate}
\end{prop}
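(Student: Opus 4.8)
The plan is to treat the three parts in order, using throughout the open exhaustion of $G$ by the subgroups $G^{(N)} = \prod_{n\le N} G_n \times \prod_{n>N} U_n$, each open in $G$ and carrying the product topology, with $G = \bigcup_N G^{(N)}$. Normalize the Haar measure $\mu$ on $G$ so that $\mu(U)=1$; restricted to $G^{(N)}$ it is then the product of the Haar measures $\mu_n$ on $G_n$ (normalized by $\mu_n(U_n)=1$) for $n\le N$ with the probability Haar measures on the $U_n$ for $n>N$. Two elementary observations drive everything. First, every element of $\Gamma=\bigoplus_n\Gamma_n$ has all but finitely many coordinates trivial (hence in $U_n$), so $\Gamma\subseteq G$, and $\Gamma\cap U=\{e\}$ because $\Gamma_n\cap U_n=\{e\}$ coordinatewise; as $U$ is open this gives (i). Second, $U_n$ injects into $G_n/\Gamma_n$ (it meets each $\Gamma_n$-coset at most once, since $U_n^{-1}U_n=U_n$ and $U_n\cap\Gamma_n=\{e\}$) and does so measure-preservingly, so $1=\mu_n(U_n)\le c_n$; thus every $c_n\ge 1$ and the partial products $C_N$ are non-decreasing.

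For (ii) I would build an explicit fundamental domain. Using that $U_n$ injects into $G_n/\Gamma_n$, choose for each $n$ a measurable fundamental domain $D_n$ for $\Gamma_n$ in $G_n$ with $U_n\subseteq D_n$ and $\mu_n(D_n)=c_n$. Set $D=\{(x_n)\in\prod_n D_n : x_n\in U_n \text{ for all but finitely many } n\}$. I claim $D$ is a fundamental domain for $\Gamma$ in $G$: given $g=(g_n)\in G$, coordinatewise uniqueness picks the unique $\gamma_n\in\Gamma_n$ with $g_n\gamma_n\in D_n$, and for all large $n$ one has $g_n\in U_n\subseteq D_n$, forcing $\gamma_n=e$, so $(\gamma_n)\in\Gamma$ and $g\gamma\in D$, uniqueness being inherited from the factors. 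Writing $D$ as the increasing union of $D^{(N)}=\prod_{n\le N}D_n\times\prod_{n>N}U_n$ (increasing because $U_n\subseteq D_n$) and computing $\mu(D^{(N)})=\prod_{n\le N}c_n=C_N$, continuity from below yields $\covol(\Gamma)=\mu(D)=\lim_N C_N$. Since the $C_N$ increase, this limit is finite exactly when $\Gamma$ is a lattice, which is (ii).

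For (iii), both directions hinge on the clopen identity that $c_n=1$ forces $U_n\Gamma_n=G_n$. Indeed $U_n\Gamma_n$ is clopen (for $x\notin U_n\Gamma_n$ the open set $U_nx$ is disjoint from it) and right $\Gamma_n$-invariant; if $c_n=1$ its complement projects onto an open subset of $G_n/\Gamma_n$ of zero measure, hence is empty since the invariant measure has full support, so $U_n$ is itself a fundamental domain. For necessity, a compact $K$ with $K\Gamma=G$ lies in some $G^{(N)}$ (compact subsets of the restricted product are contained in one of the open subgroups), so $G=G^{(N)}\Gamma$; applying the continuous homomorphism $\pi_n\colon G\to G_n$ for $n>N$ gives $G_n=U_n\Gamma_n$, whence $c_n=\mu_n(U_n)=1$ because $U_n$ then maps bijectively and measure-preservingly onto $G_n/\Gamma_n$; moreover the continuous surjection $G/\Gamma\to G_n/\Gamma_n$, $g\Gamma\mapsto\pi_n(g)\Gamma_n$, shows each $\Gamma_n$ is uniform. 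For sufficiency, take relatively compact fundamental domains $D_n$ for the finitely many $n<N$ (possible since $\Gamma_n$ is uniform) and $D_n=U_n$ for $n\ge N$; then $D=\prod_{n<N}D_n\times\prod_{n\ge N}U_n$ is a fundamental domain contained in, and relatively compact in, $G^{(N-1)}$, so $G/\Gamma$ is compact.

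The routine parts are the discreteness and the bookkeeping of the product measure. I expect the main obstacle to be pinning down the interaction between the restricted-product topology and the measure: verifying that a compact set sits inside a single $G^{(N)}$ (used crucially in the necessity part of (iii)), and checking that the candidate $D$ genuinely is a fundamental domain despite the \emph{all but finitely many} constraint. The observation $c_n\ge 1$ — which makes $C_N$ monotone and lets me arrange $U_n\subseteq D_n$ — is what makes both the fundamental-domain construction and the monotone-convergence conclusion go through cleanly.
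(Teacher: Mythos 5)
Your proposal is correct and follows essentially the same route as the paper: the same exhaustion by open subgroups ($O_k$ in the paper, $G^{(N)}$ for you), the same product fundamental domains built from per-factor domains containing $U_n$, and the same monotone measure computation giving $\covol(\Gamma)=\lim_k C_k$. The only cosmetic differences are that you justify the step $c_n=1\Rightarrow U_n\Gamma_n=G_n$ via a clopen/full-support argument and prove the necessity direction of (iii) by projecting onto the factors $G_n$, whereas the paper deduces both from the stabilization of the covolumes $C_n$; these are local variants of the same argument.
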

\begin{proof}
(i) By construction $U = \prod_n U_n$ is a compact open subgroup of $G$ that intersects $\Gamma$ trivially. Thus $\Gamma$ is discrete in $G$.

\medskip \noindent
(ii) For each $k \geq 0$, let $O_k < G$ be the subgroup of $G$ consisting of those sequences $(g_n)_{n \geq 0}$ such that $g_n \in U_n$ for all $n > k$. Thus  $(O_k)$ forms an ascending chain of open subgroups of $G$, whose union is the whole of $G$. For each $n \geq 0$, let $F_n$ be a fundamental domain for $\Gamma_n$ in $G_n$. Since $\Gamma_n \cap U_n = \{1\}$ we may assume that $F_n$ contains $U_n$ for all $n$. Then for each $k \geq 0$, the set
$$\Omega_k = F_0 \times \dots \times F_k \times U_{k+1} \times U_{k+2} \times \dots $$
is a fundamental domain for $\Lambda_k =  \Gamma \cap O_k$ in $O_k$.
Normalize the Haar measure on $G$ so that $U$ has measure~$1$. Then we have
$$\vol(O_k/  \Lambda_k) = \vol(\Omega_k) = \prod_{n=0}^k \lambda_n(F_n) = \prod_{n=0}^k c_n = C_k$$
for all $k$, where $\lambda_n$ denotes the Haar measure on $G_n$. Therefore we deduce that
$$\vol(G/\Gamma) = \lim_{k \to \infty} \vol(O_k/  \Lambda_k)=  \lim_{k \to \infty} C_k.$$
The desired assertion follows.

\medskip \noindent
(iii) The necessity of the condition that $\Gamma_n$ is uniform in $G_n$ for all $n$ is clear; we assume henceforth that it holds. In particular we may assume that $F_n$ is compact for all $n$. Therefore the sets $\Omega_k$ are all compact and form an ascending chain whose union $\Omega = \bigcup_{k \geq 0} \Omega_k$ is a fundamental domain for $\Gamma$ in $G$.

If $c_n=1$ for all $n \geq N$, then $F_n = U_n$ for all $n \geq N$ (up to a set of measure zero) so that $\Omega = \Omega_k$ for all $k \geq N$.  Thus $\Gamma$ is uniform in $G$.

Conversely, if $\Gamma$ is uniform,   then it has a compact fundamental domain, which must therefore be contained in $O_N$ for some sufficiently large $N$.     Then for all $n \geq N$ we have
$$\lim_{k \to \infty} C_k = \vol(G/\Gamma) = \vol(\Omega_n/\Lambda_n) = C_n,$$
so that $c_n = 1$ for all $n \geq N$.
\end{proof}

\subsection{Proof of Theorem \ref{theo:example}}

Retain the notations of Theorem \ref{theo:example}.
For each $n \geq 0$, we set $T_n =  \FF_{q_n}$, $U_n = \FF_{q_n}^*$ and $G_n = T_n \rtimes U_n$. We normalize the counting measure on $G_n$ so that $U_n$ has measure $1$; thus each element of $G_n$ has measure $\frac 1 {q_n-1}$. Notice that $G$ is nothing but the restricted product of the sequence $(G_n, U_n)$.

For each $n \geq 0$, we further define $S_n \cong  \FF_{q_n}^*$ as the image of the injective homomorphism
$$ \FF_{q_n}^* \to G_n =  \FF_{q_n} \rtimes  \FF_{q_n}^* : x \mapsto (x-1, x).$$
Identifying  $G_n $ with its canonical image in $G$,   we view $T_n$ and $S_n$ as finite subgroups of $G$.

\medskip
Let now $A \subset \NN$ be a set of non-negative integers. For each $n \geq 0$ we set $\Gamma_n = S_n$ if $n \in A$ and $\Gamma_n = T_n$ otherwise. Finally, we define $\Gamma_A = \bigoplus_{n \geq 0} \Gamma_n$. Then Proposition~\ref{prop:ConstrScheme} ensures that $\Gamma_A$ is a lattice in $G$ if and only if the sequence
$(\prod_{n \in A, n \leq k} \frac{q_n}{ q_n-1})_{k \geq 0}$ converges to a finite value. This happens if the series $\sum_n \log ( \frac{q_n}{ q_n-1}) = \sum_n \log( 1 + \frac 1 {q_n-1})$ converges. Using Taylor expansion of the function $x \mapsto \log(1+x)$ (which has convergence radius equal to~$1$), we have
$$\begin{array}{rcl}
 \log( 1 + \frac 1 {q_n-1}) & = & \sum_{m>0} \frac{(-1)^{m+1}} m (\frac 1 {q_n-1})^m\\
& \leq &  \sum_{m>0} (\frac 1 {q_n-1})^m\\
& = &  \frac{ 1/ (q_n-1)}{1-   1/( {q_n-1})}\\
& = &  \frac 1 {q_n - 2}\\
& < & \frac 2 {q_n }
\end{array}$$
provided $q_n > 4$.
Therefore the hypothesis $ \sum_n \frac 1 {q_n } < \infty$ ensures that  the series $\sum_{n } \log( 1 + \frac 1 {q_n-1})$ converges, and hence  that $\Gamma_A$ is a lattice.

  Proposition~\ref{prop:ConstrScheme} also shows that    $\Gamma_A$ is uniform if and only if the sequence $(\prod_{n \in A, n \leq k} \frac{q_n}{ q_n-1})_{k \geq 0}$ is eventually constant.  This in turn is equivalent to the fact that the set $A$ is finite, so $\Gamma_A$ is non-uniform for any infinite set $A$. 
  
Let now $B \subset   \NN$ be another infinite set, and assume that $\Gamma_A$ and $\Gamma_B$ are commensurable, i.e. there exists $g \in G$ such that $g \Gamma_A g^{-1} \cap \Gamma_B$ is of finite index in $g \Gamma_A g^{-1} $ and in $ \Gamma_B$. Since $\bigoplus_{n \not \in A} T_n$ is a normal subgroup of $G$ contained in $\Gamma_A$, we see that $\Gamma_B$ contains a finite index subgroup of $\bigoplus_{n \not \in A} T_n$. Similarly $\Gamma_A$ contains a finite index subgroup of $\bigoplus_{n \not \in B} T_n$. It follows that the symmetric difference $A \triangle B$ is finite. 
Thus, by letting $A$ vary over infinite subsets of $\NN$ with infinite complements, we obtain an uncountable family of pairwise non-commensurable  non-uniform lattices, as desired.

\medskip
Let now $\Gamma < G$ be a uniform lattice. We want to show that $\Gamma$ is commensurable with $\Lambda$. By the same argument as before, the cocompactness of $\Gamma$ implies that the sequence $ \vol(O_n/O_n \cap \Gamma)$ is eventually constant. For each $n \geq 0$, let $p_n \colon G \to G_n$ be the natural projection, and for $N \geq 0$ let $p_{> N} $ denote the natural projection of $G$ onto the restricted product $\prod'_{n > N} (G_n, U_n)$, defined as in Section~\ref{sec:RestricedProduct}. Let also $\Gamma_n = O_n \cap \Gamma$. Since $ \vol(O_n/O_n \cap \Gamma)$ is eventually constant, there exists $N$ such that $q_{n} = |O_{n} : O_{n-1}|  = |\Gamma_{n} : \Gamma_{n-1}|$ for all $n > N$. Via the projection $p_n \colon G \to G_n$, the group $G$ acts on the affine line $\FF_{q_n}$. Let us consider the induced action of $\Gamma_n$. By the Orbit Counting Lemma, we have $|\Gamma_n| = |\Gamma_{n-1}|. |\Gamma_n(0)|$. For $n>N$, we have $|\Gamma_{n} : \Gamma_{n-1}|=q_n$, so that $\Gamma_n$ is transitive on $\FF_{q_n}$. Since $G_n \cong\FF_{q_n} \rtimes \FF_{q_n}^*$ has a unique subgroup of order~$q_n$, namely $T_n$, we infer that $T_n \leq p_n(\Gamma_n)$ for all $n >N$. Similarly, considering the product action of $G$ on the finite product $\prod_{N<n<M} \FF_{q_n}$ of affine lines for $M>N$, we deduce that $\bigoplus_{N<n<M} T_n \leq (\bigoplus_{N<n<M}  p_n)(\Gamma_M)$ for all $M >N$. In particular $p_{>N}(\Lambda) = \bigoplus_{n>N} T_n \leq  p_{>N}(\Gamma)$. Since $p_{>N}$ is a continuous homomorphism with finite kernel, it maps a lattice to a lattice. It follows that $p_{>N}(\Lambda) $ is of finite index in $p_{>N}(\Gamma)$. Considering now finite index subgroups $\Lambda' \leq \Lambda$ and $\Gamma' \leq \Gamma$ such that the restriction of $p_{>N}$ to $\Lambda' $ and $\Gamma'$ is injective, we deduce   that $p_{>N}(\Gamma')$ and $p_{>N}(\Lambda')$ are commensurable, and hence that $\Gamma$ and $\Lambda$ are commensurable. \qed


\begin{rem*}
Recall the following compactness criterion \cite[Theorem 1.12]{Raghu}: A lattice $\Gamma$ in a locally compact group $G$ is non-uniform if and only if there is a sequence of elements $\gamma_n\in \Gamma\setminus\{1\}$ such that for some appropriate $g_n\in G$ we have $g_n\gamma_ng_n^{-1}\to 1$. One may view such a sequence $(\gamma_n)$ as an \textbf{approximated unipotent} sequence.
In the classical case where $G$ is a semisimple Lie group it follows from the celebrated Kazhdan--Margulis theorem \cite{Kazhdan--Margulis68} that every non-uniform lattice $\Gamma\le G$ admits a non-trivial unipotent, i.e. an element $\gamma\in \Gamma\setminus\{1\}$ whose conjugacy class contains the identity in its closure. In the general case, where $G$ is a locally compact group, one may call a non-trivial element $\gamma\in G$ \textbf{pseudo-unipotent} if the identity is contained in the closure of its conjugacy class, i.e. if there are $g_n\in G$ for which $g_n\gamma g_n^{-1}\to 1$. Then one may ask whether the analogue of the Kazhdan--Margulis theorem holds for $G$, i.e. whether every non-uniform lattice admits a pseudo-unipotent.

The group $G$ constructed above (in the proof of Theorem \ref{theo:example}) is an example showing that this is not always true. Indeed for every non-trivial $g \in G$, there exists $n$ such that the image of $g$ under the continuous quotient map $G \to G_n$ is non-trivial. So the conjugacy class of $g$ in $G$ cannot accumulate to the identity hence $G$ admits no pseudo-unipotents (more generally: a residually discrete locally compact does not contain pseudo-unipotent elements). Therefore a non-uniform lattice $\Gamma\le G$ provides an example of a lattice with approximated unipotents but no pseudo-unipotent.
\end{rem*}


\section{Auxiliary assertions on locally compact groups}\label{sec:aux}

 In the following we abbreviate the expression \emph{totally disconnected locally compact} by \textbf{\tdlc}

\subsection{Compact generation of cocompact subgroups}

The following well known fact is important.

\begin{prop}\label{prop:CompactGen}
A cocompact closed subgroup of a compactly generated locally compact group is itself compactly generated.
\end{prop}
\begin{proof}
See \cite{Macbeath-Swierczkowski59}.
\end{proof}

\subsection{Compactly presented groups}

A locally compact group $G$ is called \textbf{compactly presented} if there is a surjective homomorphism $\theta \colon F_X \rightarrow G$, where $F_X$ is the abstract free group on a set $X$, so that $\theta(X)$ is compact in $G$ and the kernel of $\theta$ is generated by words in the alphabet $X \cup X\inv$ of uniformly bounded length.

If $G$ is discrete, this is equivalent to the condition that $G$ be finitely presented. The following result, well-known in the discrete case, is due to Abels in the locally compact setting.

\begin{prop}\label{prop:CompactlyPresented:DiscreteExtension}
Let $\varphi \colon \tilde G \to G$ be a continuous homomorphism of locally compact groups. If $G$ is compactly presented and $\tilde G$ is compactly generated, then $\ker(\varphi)$ is compactly generated as a normal subgroup of $\tilde G$.
\end{prop}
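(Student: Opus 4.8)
The plan is to pull back a bounded presentation of $G$ along $\varphi$, an argument that proceeds cleanly precisely when the generators of $G$ lift to $\tilde G$, i.e. when $\varphi$ is surjective. I would therefore organise the proof around the surjective case, which is the one arising in the applications and which is Abels' theorem. Write $N=\ker\varphi$; it is closed because $\varphi$ is continuous, and $\varphi$ factors as $\tilde G\xrightarrow{\tilde\pi}\tilde G/N\xrightarrow{\bar\varphi}G$ with $\bar\varphi$ a continuous injection. Since whether $N$ is compactly generated as a normal subgroup of $\tilde G$ depends only on the quotient map $\tilde\pi$, the essential content is the case where $\bar\varphi$ is onto $G$, and I treat $\varphi$ surjective from now on.

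Assume $\varphi$ is onto. Fix a compact symmetric generating set $\tilde Q\ni 1$ of $\tilde G$ and put $Q=\varphi(\tilde Q)$, a compact symmetric generating set of $G$; since compact presentability is independent of the chosen compact generating set, there is an $r\ge 1$ such that the kernel of the tautological map $\pi\colon F_Q\to G$ is the normal closure in the free group $F_Q$ of the set $W$ of words of length $\le r$ lying in $\ker\pi$. The letter map $t=\varphi|_{\tilde Q}\colon\tilde Q\to Q$ is onto and induces a surjection $\hat t\colon F_{\tilde Q}\to F_Q$; choosing a set-theoretic section $s\colon Q\to\tilde Q$ of $t$ yields $\hat s\colon F_Q\to F_{\tilde Q}$ with $\hat t\circ\hat s=\mathrm{id}$. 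Then $\beta:=\varphi\circ\tilde\pi=\pi\circ\hat t$, so $\ker\beta=\hat t^{-1}(\ker\pi)$, and a short normal-form manipulation (write any $u\in\ker\beta$ as $(u\,\hat s\hat t(u)^{-1})\cdot\hat s\hat t(u)$) gives $\ker\beta=\langle\!\langle\,\ker\hat t\cup\hat s(W)\,\rangle\!\rangle_{F_{\tilde Q}}$, where $\ker\hat t$ is itself normally generated by the length-$\le 2$ words $\tilde q\,s(t(\tilde q))^{-1}$.

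Now I would apply the surjection $\tilde\pi\colon F_{\tilde Q}\to\tilde G$, which sends a normal closure to the normal closure of the image and annihilates $\ker\tilde\pi\subseteq\ker\beta$. Since $\ker\beta=\tilde\pi^{-1}(N)$ and $\tilde\pi$ is onto, $\tilde\pi(\ker\beta)=N$, so $N$ is the normal closure in $\tilde G$ of the images of the two families above. Each such image is a product of at most $\max(2,r)$ elements of $\tilde Q$, hence lies in the compact set $\tilde Q^{\le\max(2,r)}\cap N$; taking this (compact) intersection as generating set shows that $N$ is compactly generated as a normal subgroup of $\tilde G$, completing the surjective case.

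The main obstacle is the passage from the general statement to the surjective case. For non-surjective $\varphi$ the construction breaks down because the bounded relators of $G$ need not lift along $\varphi$: what one really needs is that the image $\tilde G/N$ be compactly presented, and a continuous injective image of a compactly generated group inside a compactly presented group need not have this property. Indeed, precomposing the inclusion of a finitely generated, non-finitely-presented subgroup of $F_2\times F_2$ with a free cover already furnishes, in the discrete case, a homomorphism whose kernel is not finitely (hence not compactly) generated as a normal subgroup. I would therefore make surjectivity of $\varphi$ explicit (equivalently, replace $G$ by $\tilde G/N$ under the standing assumption that this image is compactly presented); this is exactly the hypothesis under which the pullback argument goes through.
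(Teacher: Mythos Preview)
The paper does not supply a proof of this proposition: it simply cites Abels \cite{Abels}, Theorem~2.1. Your argument in the surjective case is precisely the standard pull-back-of-relations argument underlying Abels' result, and it is correct as written: the decomposition $u=(u\,\hat s\hat t(u)^{-1})\cdot \hat s\hat t(u)$ together with the fact that $\ker\hat t$ is normally generated by the length-$2$ words $\tilde q\, s(t(\tilde q))^{-1}$ gives exactly what is needed, and pushing forward along the surjection $\tilde\pi$ lands everything in the compact set $\tilde Q^{\max(2,r)}\cap N$.

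More interestingly, you have caught a genuine imprecision in the statement as printed. Without surjectivity the assertion is false, and your counterexample is valid: a finitely generated free group $\tilde G$ surjecting onto a finitely generated but not finitely presented subgroup $H\leq F_2\times F_2=G$ gives a continuous homomorphism $\varphi\colon\tilde G\to G$ with $\tilde G$ compactly generated, $G$ compactly presented, and $\ker\varphi$ not finitely normally generated (else $H$ would be finitely presented). The reduction you sketch at the outset is therefore not a reduction at all; it is the identification of a missing hypothesis. Abels' theorem is stated for quotient maps, and every invocation of the proposition in the present paper (in the proof of Theorem~\ref{thm:nil-by-nil}) is for the quotient $G\to G/N$, so the slip is harmless for the applications. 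Your suggested fix---add surjectivity to the hypotheses, or equivalently replace $G$ by the image $\tilde G/\ker\varphi$---is exactly right.
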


\begin{proof}
See Theorem 2.1 in \cite{Abels}.
\end{proof}

The following result, also due to Abels, will be used later.

\begin{prop}\label{prop:Nilpotent->CptlyPres}
Every compactly generated nilpotent locally compact group is compactly presented.
\end{prop}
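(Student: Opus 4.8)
The plan is to argue by induction on the nilpotency class $c$ of $G$. For the base case $c \le 1$ the group $G$ is a compactly generated locally compact abelian group, so by the structure theory of such groups $G \cong \RR^a \times \ZZ^b \times K$ with $K$ compact abelian. Each factor is compactly presented ($\RR^a$ and the compact group $K$ manifestly, $\ZZ^b$ being finitely presented), and a finite direct product of compactly presented groups is again compactly presented; hence $G$ is compactly presented. This base case is also exactly what lets the abelian kernels arising below be handled.

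For the inductive step, assume $c \ge 2$ and that the statement holds in all smaller classes. Let $Z = Z(G)$ be the center of $G$; it is closed (being an intersection of centralizers), central and hence abelian, and nontrivial since $G$ is nilpotent. The quotient $Q = G/Z$ is compactly generated, as a continuous quotient of $G$, and nilpotent of class at most $c-1$, so by the induction hypothesis $Q$ is compactly presented. Applying Proposition~\ref{prop:CompactlyPresented:DiscreteExtension} to the quotient map $\varphi \colon G \to Q$ --- whose source is compactly generated and whose target is compactly presented --- we conclude that $Z = \ker(\varphi)$ is compactly generated as a normal subgroup of $G$. Since $Z$ is central, the normal closure in $G$ of any compact subset $C \subseteq Z$ coincides with the closed subgroup that $C$ generates; therefore $Z$ is compactly generated as a topological group. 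Being then a compactly generated locally compact abelian group, $Z$ is compactly presented by the base case.

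It remains to assemble the extension $1 \to Z \to G \to Q \to 1$, in which both $Z$ and $Q$ are compactly presented, into a compact presentation of $G$, and this is where the main work lies. The mechanism is the locally compact analogue of the classical fact that an extension of a finitely presented group by a finitely presented group is finitely presented: one lifts a compact generating set of $Q$ to $G$, adjoins a compact generating set of $Z$, and verifies that the relations of $Z$, the lifts of the boundedly many relators of $Q$, and the conjugation relations expressing each $g\,c\,g^{-1}\in Z$ in terms of the generators of $Z$, together form a compact presentation of uniformly bounded relator length. I expect this extension step to be the principal obstacle, since it is the one point requiring that compact presentability be stable under extensions with compactly presented kernel and quotient while keeping relator lengths controlled; rather than reprove it I would invoke Abels' extension theorem from \cite{Abels}, and would double-check the two supporting observations --- that $Z(G)$ is already closed and that centrality upgrades \emph{compactly generated as a normal subgroup} to \emph{compactly generated as a group} --- as these are precisely what allow Proposition~\ref{prop:CompactlyPresented:DiscreteExtension} to feed cleanly into the extension result and close the induction.
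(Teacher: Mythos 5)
Your proof is correct, but it takes a genuinely different route from the paper: the paper's entire proof of Proposition~\ref{prop:Nilpotent->CptlyPres} is the citation ``See Theorem B in \cite{Abels}'', whereas you reconstruct an inductive argument. Your steps check out: the structure theorem $G\cong \RR^a\times\ZZ^b\times K$ settles the abelian base case; $Z(G)$ is closed and $G/Z(G)$ has strictly smaller nilpotency class; Proposition~\ref{prop:CompactlyPresented:DiscreteExtension} applied to the quotient map $G\to G/Z(G)$ does yield that $Z(G)$ is compactly generated as a normal subgroup; and centrality legitimately upgrades this to compact generation as a group, since every subgroup of the center is normal in $G$, so the closed subgroup generated by a compact $C\subseteq Z(G)$ is already the smallest closed normal subgroup containing $C$. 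The one ingredient you import that appears nowhere in this paper is the extension step: that a locally compact group with a closed compactly presented normal subgroup and compactly presented quotient is itself compactly presented. That fact is true and standard (it belongs to Abels' extension calculus for compact presentability, and is recorded, e.g., as Proposition~8.A.10 in Cornulier--de la Harpe's book on the metric geometry of locally compact groups), but since your whole induction rests on it, a precise reference rather than a generic appeal to \cite{Abels} is what a referee would ask for. As for what each approach buys: the paper's citation is maximally short, and its remark after Proposition~\ref{prop:nil-Willis} sketches an independent proof in the totally disconnected case (Willis' compact open normal subgroups plus Mal'cev's theorem that finitely generated nilpotent groups are finitely presented); your route makes the logical skeleton visible --- the result is a formal consequence of the abelian case and the stability of compact presentability under extensions, with compact generation of the center coming for free from Abels' Theorem~2.1 rather than being assumed.
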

\begin{proof}
See Theorem B in \cite{Abels}.
\end{proof}

\subsection{Discrete cocompact normal subgroups}

\begin{lem}\label{lem:DiscreteNormal}
Let $G$ be a compactly generated locally compact group and $N$ be a discrete cocompact normal subgroup. Then the centralizer of $N$ in $G$, namely $C_G(N)$, is an open normal subgroup, and the product $N C_G(N)$ is open and of finite index in $G$.

If in addition $G$ is totally disconnected, then $G$ has a compact open normal subgroup intersecting $N$ trivially.
\end{lem}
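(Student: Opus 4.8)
The plan is to treat the three assertions in turn, using throughout that $N$ is finitely generated: indeed, $N$ is a discrete cocompact subgroup of the compactly generated group $G$, hence compactly generated by Proposition~\ref{prop:CompactGen}, and a compactly generated discrete group is finitely generated.

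First I would prove that $C := C_G(N)$ is open. Fix generators $n_1, \dots, n_k$ of $N$. For each $i$, the conjugation map $G \to N$, $g \mapsto g n_i g^{-1}$, is continuous with values in the discrete group $N$, so the preimage of the point $n_i$, namely the centralizer $C_G(n_i)$, is open. Hence $C = \bigcap_{i=1}^k C_G(n_i)$ is open, being a finite intersection of open subgroups; and it is normal because the centralizer of a normal subgroup is normal. For the second assertion, $NC$ is open (a union of cosets of the open subgroup $C$) and normal (a product of normal subgroups), so $G/NC$ is discrete; it is also a continuous image of the compact space $G/N$, hence compact, and a compact discrete space is finite. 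Thus $NC$ has finite index.

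For the totally disconnected case, set $Z := N \cap C$, which is the center of $N$; it is discrete, central in $C$, and normal in $G$. Since $NC/N \cong C/Z$ is an open (hence closed) subgroup of the compact group $G/N$, the quotient $P := C/Z$ is compact, and totally disconnected because $Z$ is discrete, so $P$ is profinite; write $q \colon C \to P$ for the projection. By van Dantzig's theorem and discreteness of $Z$, I can choose a compact open subgroup $V_0 \le C$ with $V_0 \cap Z = \{1\}$; intersecting with the preimage of a small open normal subgroup $P_0 \trianglelefteq P$ contained in $q(V_0)$, I obtain a compact open subgroup $V \le C$ with $V \cap Z = \{1\}$ and $q(V) = P_0$ normal of finite index in $P$. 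The point is then that $V$ has only finitely many conjugates in $C$: since $Z$ is central and $q^{-1}(P_0) = VZ$, the subgroup $VZ$ normalizes $V$, so $N_C(V) \supseteq VZ$ has finite index $[P:P_0]$ in $C$. Consequently the normal core $V^* := \bigcap_{c \in C} c V c^{-1}$ is a \emph{finite} intersection of compact open subgroups, hence is itself compact, open, normal in $C$, and satisfies $V^* \cap Z = \{1\}$, i.e. $V^* \cap N = \{1\}$.

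It remains to promote $V^*$ to a subgroup that is normal in all of $G$. Here I would use that centralizing is symmetric: from $C = C_G(N)$ one gets $N \subseteq C_G(C)$, so $N$ centralizes $C$ and in particular normalizes $V^*$; together with $V^* \trianglelefteq C$ this gives $V^* \trianglelefteq NC$. Since $NC$ is normal of finite index in $G$, the subgroup $V^*$ again has only finitely many $G$-conjugates, so the $G$-core $\bigcap_{g \in G} g V^* g^{-1}$ is a finite intersection of compact open subgroups; it is therefore a compact open normal subgroup of $G$ contained in $V^*$, and hence meets $N$ trivially, as desired. The main obstacle is exactly this final normalization: naively, any normal subgroup obtained by pulling back from $P = C/Z$ contains the central part $Z \subseteq N$, and the device that circumvents this is the observation that profiniteness of $P$ forces the normalizer of a suitable small compact open subgroup to have finite index, keeping its normal core open.
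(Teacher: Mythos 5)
Your proposal is correct, and its first half (openness of $C_G(N)$ via finite generation of $N$ and continuity of conjugation into the discrete group $N$, plus compactness of the discrete quotient $G/NC_G(N)$) coincides with the paper's argument. In the totally disconnected case your argument is also sound, but it contains a detour that the paper's proof avoids entirely. You pass to the profinite quotient $P = C/Z$, arrange for a compact open $V \le C$ whose image $P_0$ is normal in $P$, deduce that $N_C(V) \supseteq VZ$ has finite index in $C$, take the $C$-core $V^*$, and only then take the $G$-core. But the symmetry observation you invoke at the very end already does all the work at the first step: since $N$ centralizes $C = C_G(N)$, the group $N$ normalizes \emph{every} subgroup of $C$, in particular your initial $V_0$ (compact open in $C$ with $V_0 \cap N = \{1\}$, which exists by van Dantzig and discreteness of $N$). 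Hence $NV_0 \subseteq N_G(V_0)$; moreover $NV_0$ is a subgroup (as $N$ and $V_0$ commute elementwise), it is open, and it contains the cocompact subgroup $N$, so it has finite index in $G$. Thus $V_0$ has only finitely many $G$-conjugates and its $G$-core is already a compact open normal subgroup of $G$ meeting $N$ trivially --- this one-step argument is exactly the paper's proof. The profinite quotient $P$, the choice of $P_0 \trianglelefteq P$, and the intermediate $C$-core buy you nothing: normality of $q(V)$ in $P$ is never needed, only a finite-index normalizer, which is available for free from the cocompactness of $N$.
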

\begin{proof}
By Proposition~\ref{prop:CompactGen}, the group $N$ is finitely generated. Since $N$ is discrete, every $g \in N$ has a discrete conjugacy class, hence an open normalizer in $G$. Since $N$ is  generated by a finite set, say $S$, the centralizer $C_G(N) = \bigcap_{s \in S} C_G(s)$ is open. It is normal in $G$ since $N$ is so. Since $G/N$ is compact, the image of the open subgroup $C_G(N)$ in $G/N$ is of finite index. Hence $G/N C_G(N)$ is finite.

We now assume that $G$ is totally disconnected. Then $C_G(N)$ contains a compact open subgroup $U$ intersecting $N$ trivially. The group $NU$ is open and cocompact in $G$, hence of finite index. Since $N$ commutes with $U$, we infer that the conjugacy class of $U$ in $G$ is finite. Hence $\bigcap_{g \in G} g U g^{-1}$ is a compact open normal subgroup of $G$ intersecting $N$ trivially.
\end{proof}

\subsection{Compactly generated nilpotent tdlc groups}

\begin{prop}\label{prop:nil-Willis}
Let $G$ be a compactly generated nilpotent  \tdlc group. Then $G$ has a basis of identity neighborhoods consisting of compact open normal subgroups.
\end{prop}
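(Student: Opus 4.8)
The plan is to build the desired subgroups explicitly, working downwards along the closed lower central series and exploiting that the associated graded pieces are central, hence conjugation acts trivially on them. First I would invoke van Dantzig's theorem: the compact open subgroups of the \tdlc group $G$ form a basis of identity neighbourhoods. Hence it suffices to show that every compact open subgroup $U\le G$ contains a compact open \emph{normal} subgroup $W$ of $G$; letting $U$ range over a basis then yields the claim. I set $C_1=G$ and $C_{i+1}=\overline{[G,C_i]}$, a descending chain of closed characteristic subgroups. Since commutators are continuous one has $C_i\subseteq \overline{\gamma_i(G)}$, so the nilpotency of $G$ forces $C_{c+1}=1$, where $c$ is the nilpotency class. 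Each $C_i$ is \tdlc, and each quotient $A_i:=C_i/C_{i+1}$ is a \tdlc abelian group: the image of a compact open subgroup of $C_i$ under the quotient map is a quotient of a profinite group, hence profinite and open, so $A_i$ has a compact open subgroup and is totally disconnected. Crucially, since $[G,C_i]\subseteq C_{i+1}$, the conjugation action of $G$ on $A_i$ is trivial.

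I would then construct, by downward induction on $i$ from $c$ to $1$, a compact open subgroup $W_i\le C_i$ that is normalised by $G$ and satisfies $W_i\cap C_{i+1}=W_{i+1}$ and $W_i\subseteq U$. For $i=c$ the group $C_c$ is central in $G$, so any compact open subgroup $W_c\le C_c\cap U$ is automatically $G$-invariant. Given $W_{i+1}$, let $\rho_i\colon C_i\to A_i$ be the quotient map; I choose a compact open subgroup $\bar B_i\le A_i$ contained in the open subgroup $\rho_i(U\cap C_i)$ and lift it to a subgroup $W_i$, an extension $1\to W_{i+1}\to W_i\to \bar B_i\to 1$ inside $C_i$. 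Once $W_1$ is reached it is a compact open subgroup of $G=C_1$ normalised by $G$, i.e. a compact open normal subgroup with $W_1\subseteq U$; letting $U$ shrink gives the basis.

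The heart of the matter, and the step I expect to be the main obstacle, is the single descent from $C_{i+1}$ to $C_i$: producing the lift $W_i$ as a genuine \emph{subgroup} that is simultaneously compact, open, and $G$-invariant. Here both hypotheses enter. For a set of lifts of $\bar B_i$ to generate a subgroup whose intersection with $C_{i+1}$ is still $W_{i+1}$, one needs the commutators of these lifts, which lie in $[C_i,C_i]\subseteq C_{i+1}$ by nilpotency, to fall inside the \emph{open} subgroup $W_{i+1}$; by continuity of the commutator map this is achieved by taking $\bar B_i$ small enough. For $G$-invariance, since $G$ acts trivially on $A_i$, conjugating a lift $v$ by $g\in G$ moves it only within its $C_{i+1}$-coset, by the element $[g,v]\in C_{i+1}$, so $W_i$ is $G$-invariant as soon as all these correction terms lie in $W_{i+1}$.

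Compact generation is what makes this verifiable: writing $G=\langle U\cup F\rangle$ with $F$ finite reduces $G$-invariance to invariance under the finitely many elements of $F$ together with the compact open subgroup $U$ (the latter being an open condition on the open subgroup $W_i$), so that only finitely many continuity constraints must be met, and $\bar B_i$ can be shrunk to meet them all at once. Note that the noncompact ``$\mathbf{Z}^n$-directions'' of $A_i$ are simply omitted from $\bar B_i$, which is precisely what keeps $W_i$ compact while remaining open; this is the point where the argument genuinely uses that we demand only openness and not cocompactness, and it is also why the compactly generated hypothesis cannot be dropped.
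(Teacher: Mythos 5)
Your plan is necessarily a different route from the paper's, because the paper contains no proof of this proposition at all: its entire proof is the citation ``See \cite{Willis_nil}'', and Willis's argument there rests on his tidy-subgroup/scale-function machinery. A self-contained downward induction along the closed lower central series $C_{i+1}=\overline{[G,C_i]}$ is the natural elementary alternative, and most of your skeleton is right: the van Dantzig reduction, the termination $C_{c+1}=\{1\}$, the triviality of the $G$-action on $A_i$, the identity $gvg^{-1}=[g,v]\,v$ with $[g,v]\in C_{i+1}$, and the reduction of $G$-invariance to invariance under $U\cup F$ (valid because the normalizer of $W_i$ is a subgroup of $G$). However, the two justifications you give at what you yourself call the heart of the matter do not work as stated.

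First, smallness of $\bar B_i$ does not control commutators of lifts. A lift of an element of $\bar B_i$ is pinned down only modulo $C_{i+1}$, so nothing forces it to be near the identity, and continuity of the commutator map at $(1,1)$ says nothing about such pairs. This is invisible in class $\leq 2$, where commutators of lifts depend only on the cosets, but it is fatal from class $3$ on: in the unitriangular group of size $4$ one has $[1+AE_{13},\,1+BE_{34}]=1+AB\,E_{14}$, so translating a lift by the $C_{i+1}$-element $AE_{13}$ with $A$ huge makes the commutator huge no matter how small $B$ is. Moreover, even with all commutators of lifts inside $W_{i+1}$, the intersection of $\langle\sigma(\bar B_i)\cup W_{i+1}\rangle$ with $C_{i+1}$ is governed by the defect terms $\sigma(b_1)\sigma(b_2)\sigma(b_1b_2)^{-1}$, not by commutators, and compactness of this group is unclear when $\sigma$ is discontinuous. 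All of this evaporates if you reverse the order of choices: pick a compact open subgroup $V\le C_i$ with $V\subseteq U$ \emph{first} (and, if you like, define $\bar B_i:=\rho_i(V)$), and set $W_i:=VW_{i+1}$. Since $W_{i+1}$ is normal in $G$ by the inductive hypothesis, $W_i$ is automatically a compact open subgroup of $C_i$ contained in $U$; no lifting problem ever arises. (The requirement $W_i\cap C_{i+1}=W_{i+1}$ can simply be dropped---it is never used---or restored by shrinking $V$ so that $V\cap C_{i+1}\subseteq W_{i+1}$.)

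Second, invariance under $U$ is not ``an open condition on the open subgroup $W_i$''. The group $W_i$ is open in $C_i$, which is in general \emph{not} open in $G$, so $N_G(W_i)$ contains no open subgroup of $G$ a priori; and even an open normalizer would not have to contain $U$. What is needed is a single \emph{uniform} constraint over the compact set $U$: the commutator map $U\times C_i\to C_{i+1}$ is continuous, sends $U\times\{1\}$ to $1$, and $W_{i+1}$ is relatively open in $C_{i+1}$, so compactness of $U$ (the tube lemma) yields a compact open $V\le C_i$ with $[U,V]\subseteq W_{i+1}$. (Alternatively, take $V=U''\cap C_i$ where $U''$ is the intersection of the finitely many $U$-conjugates of a small compact open subgroup of $G$; then $uVu^{-1}=V$ for all $u\in U$ outright.) Adding the finitely many conditions $[f^{\pm1},V]\subseteq W_{i+1}$ for $f\in F$, one gets $gW_ig^{-1}=W_i$ for every $g\in U\cup F$, hence for every $g\in G$, and your induction closes. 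With these two repairs your argument is complete, and arguably more elementary than the result the paper imports.
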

\begin{proof}
See \cite{Willis_nil}.
\end{proof}

Notice that Proposition~\ref{prop:Nilpotent->CptlyPres} can be deduced from Proposition~\ref{prop:nil-Willis} in the totally disconnected case, by invoking the classical Mal'cev theory, which implies that finitely generated nilpotent groups are finitely presented.

\subsection{A cocompactness criterion}

The following simple lemma will be used repeatedly in later sections.

\begin{lem}\label{lem:HUGA}
Let $G$ be a locally compact group. Let $H,A,U\le G$ be closed subgroups such that
\begin{itemize}
 \item $A$ is normal in $G$,
 \item $U$ is open in $G$ and
 \item $HA$ is dense in $G$.
\end{itemize}
Then $G/H$ is compact if and only if $UA/(UA\cap H)$ is compact.
\end{lem}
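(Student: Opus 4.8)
The plan is to identify $UA/(UA\cap H)$ with $G/H$ as topological spaces, after which the equivalence of the two compactness statements is immediate.

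First I would record that $UA$ is an \emph{open subgroup} of $G$. It is a subgroup because $A$ is normal: for $u_1,u_2\in U$ and $a_1,a_2\in A$ one has $u_1a_1u_2a_2 = u_1u_2(u_2^{-1}a_1u_2)a_2 \in UA$, and inverses are handled the same way. It is open because $UA = \bigcup_{a\in A} Ua$ is a union of right translates of the open set $U$; in particular $UA$ is clopen.

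The key step is to show that $G = UAH$, i.e.\ that $UA$ acts transitively on $G/H$, the base point having stabilizer $UA\cap H$. Consider $UA\cdot HA = \bigcup_{z\in HA}(UA)z$. Being a union of right cosets of the open subgroup $UA$, it is open; and a point $g$ lies in it precisely when the open coset $(UA)g$ meets $HA$ (using that $(UA)^{-1}=UA$). Since $HA$ is dense, every nonempty open set meets it, so $UA\cdot HA = G$. Normality of $A$ gives $HA = AH$, whence $UA\cdot HA = UA\cdot AH = U(AA)H = UAH$; thus $G = UAH$. Consequently the map $UA/(UA\cap H)\to G/H$ induced by the inclusion $UA\hookrightarrow G$ is a bijection: surjectivity is exactly $G = UAH$, and injectivity records that the stabilizer of the base point in $UA$ is $UA\cap H$.

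To upgrade this continuous bijection to a homeomorphism I would use that the quotient map $\pi\colon G\to G/H$ is open (standard for topological groups), so its restriction to the open subset $UA$ is an open continuous surjection onto $G/H$; the induced map on $UA/(UA\cap H)$ is therefore open as well, hence a homeomorphism. Since homeomorphic spaces are simultaneously compact or non-compact, $G/H$ is compact if and only if $UA/(UA\cap H)$ is, as required. The only genuinely delicate point is this passage from the set-theoretic bijection to a homeomorphism, for which the openness of $\pi$ together with the openness of $UA$ in $G$ is exactly what is needed; the remainder is routine coset algebra resting on the normality of $A$ and the density of $HA$.
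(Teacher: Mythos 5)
Your proof is correct and follows essentially the same route as the paper's: both rest on the single key observation that density of $HA$ together with openness of $U$ (and normality of $A$) forces $G = UAH$ (the paper writes $HUA = HAU = G$), and then identify $G/H$ with $UA/(UA\cap H)$. The paper compresses this identification into one line about fundamental domains, whereas you spell out the continuous open bijection $UA/(UA\cap H)\to G/H$; this is welcome added care, not a different argument.
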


\begin{proof}
Since $HA$ is dense and $U$ is open, $HUA=HAU=G$, hence a fundamental domain of $UA\cap H$ in $UA$ is also a fundamental domain for $H$ in $G$.
\end{proof}

\subsection{Locally elliptic groups}\label{sec:LE}

The notions and results of this section, due to Platonov \cite{Platonov}, will be used frequently without notice.

Given a locally compact group $G$, a subgroup $H \leq G$ is called \textbf{locally elliptic} if every finite subset of
$H$ is contained in a compact subgroup of $G$. As shown in \cite{Platonov} this is equivalent to the requirement that every compact subset of $H$ is contained in a compact subgroup of $G$. Therefore, if $G$ is locally elliptic and second countable, then $G$ is a union of a countable ascending chain of compact open subgroups.

By \cite{Platonov}, the closure of a locally elliptic subgroup is locally elliptic, and an extension of a locally elliptic group by a locally elliptic group is itself locally elliptic. In particular any locally compact group $G$ has a largest closed normal subgroup which is locally elliptic, called the \textbf{LE-radical} of $G$ and denoted by $\LF(G)$. It is a closed characteristic subgroup of $G$ and the quotient $G/\LF(G)$ has trivial LE-radical.

\subsection{Locally $p$-elliptic groups}

The results in this section are variations on Platonov's results on locally elliptic groups which we shall need later.

Let $p$ be a prime. Given a \tdlc group $G$, a subgroup $H \leq G$ is called \textbf{locally $p$-elliptic} if every finite subset of
$H$ is contained in a pro-$p$ (hence compact) subgroup of $G$. A locally $p$-elliptic subgroup is thus clearly locally elliptic. It turns out that the properties of locally elliptic groups mentioned above have a direct analogue for the class of locally $p$-elliptic groups. We record them in the following.

\begin{prop}\label{prop:Locally-p-elliptic}
Let $G$ be a \tdlc group.

\begin{enumerate}[(i)]
\item  In a locally $p$-elliptic subgroup $H \leq G$,  every compact subset is contained in a pro-$p$ subgroup of $G$.

\item If $H \leq G$ is   locally $p$-elliptic, then so is the closure $\overline H$.

\item If $H \leq G$ is a closed   normal locally $p$-elliptic subgroup, then $G$ is locally $p$-elliptic if and only if $G/H$ is so.

\item Every \tdlc group $G$ has a largest locally $p$-elliptic normal subgroup, denoted by $\LFp(G)$, called the \textbf{$p$-radical}. It is a closed characteristic subgroup of $G$, and the quotient $G/\LFp(G)$ has trivial $p$-radical.
\end{enumerate}

\end{prop}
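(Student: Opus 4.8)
The plan is to mirror Platonov's treatment of locally elliptic groups from \S\ref{sec:LE}, systematically replacing ``compact subgroup'' by ``pro-$p$ subgroup'', and to rely throughout on the elementary criterion that a profinite group $P$ is pro-$p$ if and only if every finite continuous quotient of $P$ is a $p$-group. Since a locally $p$-elliptic subgroup is in particular locally elliptic, I may freely invoke Platonov's results \cite{Platonov}: the closure of a locally elliptic subgroup is locally elliptic, an extension of a locally elliptic group by a locally elliptic group is locally elliptic, and every compact subset of a locally elliptic subgroup lies in a compact subgroup.

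For (i), given a compact $K \subseteq H$, the promotion from \cite{Platonov} shows $C := \overline{\langle K \rangle}$ is compact, hence profinite. To see it is pro-$p$, fix an open normal $N \trianglelefteq C$; as $\langle K\rangle$ is dense, finitely many $k_1, \dots, k_r \in K$ have images generating the finite group $C/N$. By local $p$-ellipticity of $H$ these lie in a common pro-$p$ subgroup $P \le G$, so the quotient map $C \to C/N$ carries the pro-$p$ group $\overline{\langle k_1, \dots, k_r\rangle} \le P$ onto $C/N$, forcing $C/N$ to be a $p$-group. Part (ii) follows the same template with $C := \overline{\langle F\rangle}$ for a finite $F \subseteq \overline H$ (compact because $\overline H$ is locally elliptic), but now the generators of $C/N$ lie in $\overline H$ rather than $H$, so they must be approximated inside $H$ without disturbing $C/N$. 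The device is a compact open subgroup $U \le G$ normalized by the compact group $C$ (obtained by intersecting the finitely many $C$-conjugates of a small compact open subgroup) chosen so that $U \cap C \le N$; then $CU$ is compact with $U$ normal, and the composite homomorphism $\Psi \colon CU \to CU/U \cong C/(C \cap U) \to C/N$ restricts to the quotient map on $C$. Since each coset $cU$ is open and meets the dense subgroup $H$, one picks $h_c \in H \cap cU$ with $\Psi(h_c) = cN$; finitely many such $h_c$ generating $C/N$ form a finite subset of $H$, hence sit in a pro-$p$ subgroup, and applying $\Psi$ shows $C/N$ is a $p$-group.

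For (iii), the forward implication is immediate: the quotient map sends a pro-$p$ subgroup of $G$ containing a lift of a finite subset of $G/H$ onto a pro-$p$ subgroup of $G/H$. For the converse, given a finite $F \subseteq G$ whose image lies in a pro-$p$ subgroup of $G/H$, I pass to $\pi^{-1}\big(\overline{\langle \pi(F)\rangle}\big)$ to reduce to the case where $G/H$ is itself pro-$p$. There $G$ is locally elliptic (an extension of the locally elliptic $H$ by the compact group $G/H$), so $C := \overline{\langle F\rangle}$ is compact; moreover $C \cap H$ is compact and locally $p$-elliptic, hence pro-$p$ by (i), while $C/(C\cap H) \cong CH/H$ embeds in the pro-$p$ group $G/H$ and is thus pro-$p$. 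Since an extension of a pro-$p$ group by a pro-$p$ group is pro-$p$, the group $C$ is pro-$p$ and contains $F$. Part (iv) is then the standard radical construction: using (ii) and (iii) one checks that the class of closed normal locally $p$-elliptic subgroups is stable under $(A,B)\mapsto \overline{AB}$, so the subgroup $R$ generated by all of them has every finite subset inside one such subgroup and is therefore locally $p$-elliptic; its closure $\LFp(G) := \overline R$ is a characteristic locally $p$-elliptic subgroup containing all others, and any closed normal locally $p$-elliptic subgroup of $G/\LFp(G)$ pulls back via (iii) into $\LFp(G)$, so the quotient has trivial $p$-radical.

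The main obstacle I anticipate is part (ii): the generators of the test quotient $C/N$ a priori lie only in the closure $\overline H$, and the entire argument hinges on the $C$-normalized compact open subgroup $U$, which allows one to replace them by genuine elements of $H$ lying in the same cosets modulo $N$ while keeping the homomorphism $\Psi$ available to transport pro-$p$-ness. Once this approximation step is in place, parts (i), (iii) and (iv) reduce to routine verifications built on the pro-$p$ quotient criterion and the cited locally elliptic facts.
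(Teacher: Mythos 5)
Your proposal is correct, and for parts (i), (iii) and (iv) it follows essentially the paper's own route. In (i) both arguments exploit that finitely many elements of $K$ generate each finite continuous quotient of $\overline{\langle K\rangle}$ and that continuous quotients of pro-$p$ groups are pro-$p$ (the paper phrases this contrapositively, producing a surjection of an open subgroup onto $\ZZ/q$ with $q\neq p$ and deriving a contradiction, where you verify directly that every finite continuous quotient is a $p$-group). In (iii) both rest on Platonov's extension theorem for locally elliptic groups together with the fact that an extension of a pro-$p$ group by a pro-$p$ group is pro-$p$; the paper applies this to an arbitrary compact open subgroup $U\leq G$, written as an extension of $UH/H$ by $U\cap H$, while you apply it to $C=\overline{\langle F\rangle}$ after first reducing to the case where $G/H$ is pro-$p$. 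Part (iv) is in both cases the standard radical construction built on (ii) and (iii), with the paper's appeal to Zorn's lemma replaced by the subgroup generated by all closed normal locally $p$-elliptic subgroups; these are interchangeable.

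The genuine divergence is in part (ii), which you correctly identified as the delicate point. The paper sidesteps your approximation device entirely by choosing the test subgroup to be \emph{relatively open} in $\overline H$: since $\overline H$ is locally elliptic by \cite{Platonov}, a finite subset witnessing a failure of local $p$-ellipticity lies in a compact relatively open subgroup $W\leq\overline H$, which then cannot be pro-$p$ and hence, after shrinking, admits a continuous surjection $\varphi\colon W\to\ZZ/q$ with $q\neq p$ prime. Because $W$ is open in $\overline H$, the subgroup $H\cap W$ is dense in $W$, so the preimage under $\varphi$ of a generator of $\ZZ/q$ is open and meets $H$; one then concludes exactly as in (i). You instead keep the non-open test group $C=\overline{\langle F\rangle}$ and compensate with the $C$-normalized compact open subgroup $U\leq G$ satisfying $C\cap U\leq N$ (legitimate: the normalizer of a compact open subgroup of $G$ is open, so the compact group $C$ has only finitely many conjugates of a sufficiently small one, and their intersection works) together with the extension $\Psi\colon CU\to C/N$ of the quotient map; the cosets $cU$ are open in $G$ and contain points of $\overline H$, hence meet $H$, and transporting pro-$p$-ness through $\Psi$ (applied, as in your part (i), to the \emph{closure} of the group generated by the chosen elements of $H$, not merely to the abstract group) finishes the argument. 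Both proofs are valid. The paper's choice of a relatively open $W$ makes the approximation step automatic and the proof shorter; your normalized-$U$ construction is more hands-on but is a robust, reusable technique in the \tdlc setting, and it avoids having to first convert the failure of local $p$-ellipticity into a non-pro-$p$ relatively open compact subgroup.
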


\begin{proof}
(i). Let $H \leq G$ be a locally $p$-elliptic subgroup and  $C \subset   H$ be a compact subset. We must prove that $V = \overline{\langle C \rangle}$ is a pro-$p$ subgroup of $G$. We know from \S\ref{sec:LE} that $V$ is a compact, hence profinite, subgroup. If $V$ is not pro-$p$, then $V$ maps continuously onto a finite group whose order is not a power of $p$. In particular $V$ has an open subgroup $W$ admitting a continuous surjective map $\varphi \colon W \to \ZZ/q$  onto a cyclic group of order $q$, for some prime $q \neq p$. Since $\langle C \rangle$ is dense in $V$ and $W$ is open, it follows that $\langle C \rangle \cap W$ is dense in $W$. Therefore there exists $w \in \langle C \rangle \cap W$ such that $\varphi(w)$ generates $\ZZ/q$. Let then $c_1, \dots, c_m \in C$ such that $w \in \langle c_1, \dots, c_m \rangle$. Since $H$ is locally $p$-elliptic, the group $\overline{\langle c_1, \dots, c_m\rangle}$ is pro-$p$. Thus the group $ \overline{\langle c_1, \dots, c_m\rangle} \cap W$, which contains $w$, is also pro-$p$. It follows that the cyclic group $\varphi(W)$ is a quotient of the pro-$p$ group $ \overline{\langle c_1, \dots, c_m\rangle} \cap W$, and is thus  a $p$-group. This contradiction finishes the proof of (i).

\medskip \noindent
(ii) We know from  \S\ref{sec:LE} that $\overline H$ is locally elliptic. If it is not locally $p$-elliptic, then it contains a compact open subgroup which is not pro-$p$. This implies again that it contains a compact open subgroup $W$ which maps continuously onto a cyclic group of prime order $q \neq p$. Since $H \cap W$ is dense in $W$, we may conclude as in the proof of (i).

\medskip \noindent
(iii) Every continuous quotient of a locally $p$-elliptic group is locally $p$-elliptic. Thus the   `only if' part is clear. Suppose conversely that $G/H$ is locally $p$-elliptic. Let $U \leq G$ be a compact open subgroup. Then $U$ decomposes as an extension of $UH/H$ by $U \cap H$. Both of these profinite groups are pro-$p$ by hypothesis, in view of (i). Therefore $U$ is pro-$p$, and $G$ is indeed locally $p$-elliptic.

\medskip \noindent
(iv) The class of locally $p$-elliptic subgroups of $G$ is stable under taking directed unions. In particular it contains maximal elements by Zorn's lemma. Such a maximal element is closed by (ii). The other required assertions now follow from (iii).
\end{proof}

\subsection{\{Compact-by-discrete\}-by-compact groups}

Given a (locally compact) group $G$ and a (closed) normal subgroup $N \leq G$ such that $N$ satisfies an algebraic/topological property $A$ and $G/N$ satisfies a property $B$, then we say that $G$ is a \textbf{A-by-B group}.

\begin{lem}\label{lem:compact-by-discrete-by-compact}
Let $G$ be a compactly generated locally compact group with a cocompact closed normal subgroup $N$. Assume that $N$ has a compact relatively open normal subgroup $V$. Then $V$ is contained in a compact normal subgroup of $G$.

If in addition $G$ is totally disconnected, then $V$ is contained in a compact open normal subgroup of $G$.
\end{lem}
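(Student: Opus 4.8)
The plan is to first show that $V$, though only assumed normal in $N$, is in fact normalized by an open finite-index subgroup of $G$. This forces $V$ to have only finitely many $G$-conjugates, so that its normal closure is a finite product of compact groups, hence the desired compact normal subgroup. For the totally disconnected refinement, I would pass to the quotient by this compact normal subgroup and invoke Lemma~\ref{lem:DiscreteNormal}.

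The first step is to prove that the normalizer $N_G(V)$ is open in $G$. Fix $g_0 \in N_G(V)$ and consider the continuous conjugation map $c \colon G \times N \to N$, $(g,x) \mapsto gxg^{-1}$. Since $c(\{g_0\} \times V) = V$ is open in $N$ and $V$ is compact, a tube-type argument (cover $V$ by finitely many boxes $P_{v_j} \times Q_{v_j}$ on which $c$ lands in $V$, and intersect the $P_{v_j}$) produces an open neighborhood $P \ni g_0$ with $gVg^{-1} \subseteq V$ for all $g \in P$; applying the same to $g_0^{-1} \in N_G(V)$ and intersecting yields a neighborhood of $g_0$ on which $gVg^{-1} = V$. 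Hence $N_G(V)$ is open. As $V \trianglelefteq N$ we have $N \le N_G(V)$, so $N_G(V)/N$ is an open subgroup of the compact group $G/N$, and therefore $N_G(V)$ has finite index in $G$.

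Next, let $H_0 \trianglelefteq G$ be the (open, finite-index, normal) core of $N_G(V)$; then $V \trianglelefteq H_0$, and for every $g \in G$ the conjugate $gVg^{-1} \subseteq N$ is again normalized by $H_0$, because $g^{-1}H_0 g = H_0$ normalizes $V$. Since the conjugate $gVg^{-1}$ depends only on the coset $gN_G(V)$, there are only finitely many distinct conjugates, say $g_1 V g_1^{-1}, \dots, g_r V g_r^{-1}$, and the normal closure $K = \langle\langle V \rangle\rangle_G$ equals the set product $(g_1 V g_1^{-1}) \cdots (g_r V g_r^{-1})$ of finitely many subgroups all normal in $H_0$. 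This product is a subgroup, and it is compact as the continuous image of a compact set under iterated multiplication. Thus $K$ is a compact subgroup, normal in $G$ (being a normal closure), contained in $N$, and containing $V$; this settles the first assertion. Note that this part uses cocompactness of $N$ but not compact generation.

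Finally, suppose $G$ is totally disconnected and let $q \colon G \to G/K$ be the quotient map. The group $G/K$ is again totally disconnected: a compact open subgroup $U \le G$ gives a totally disconnected open subgroup $UK/K \cong U/(U \cap K)$ of $G/K$, and the identity component, being contained in every open (hence clopen) subgroup, is therefore trivial. Moreover $G/K$ is compactly generated as a continuous image of $G$. Since $V \le K$ is open in $N$ and $K \le N$, the image $N/K$ is a discrete cocompact normal subgroup of $G/K$, so Lemma~\ref{lem:DiscreteNormal} provides a compact open normal subgroup $\bar M \trianglelefteq G/K$. Its preimage $M = q^{-1}(\bar M)$ is normal and open, is compact as an extension of the compact group $\bar M$ by the compact group $K$, and contains $K \supseteq V$; this $M$ is the required compact open normal subgroup. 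I expect the openness of $N_G(V)$ to be the main obstacle: everything downstream is bookkeeping, but that step is the hinge converting \emph{normal in $N$} into \emph{almost normal in $G$}, and it is where the compactness of $V$ and the continuity of conjugation must be combined correctly.
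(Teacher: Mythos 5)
Your proof is correct, and your route to the main (first) assertion is genuinely different from the paper's. The paper argues through Platonov's theory of locally elliptic groups (\S\ref{sec:LE}): every $G$-conjugate of $V$ is a compact normal subgroup of $N$, hence lies in the LE-radical $\LF(N)$; writing $G=KN$ with $K$ compact (cocompactness) and using $V\trianglelefteq N$, each conjugate $gVg^{-1}=kVk^{-1}$ lies in the fixed compact set $KVK\cap\LF(N)$, so the normal closure of $V$ is contained in a compactly generated subgroup of the locally elliptic group $\LF(N)$, which is compact; this gives the compact normal subgroup $W\supseteq V$. You instead prove that the normalizer $\norma_G(V)$ is open --- via the tube lemma applied to the conjugation map $G\times N\to N$, using that $V$ is compact and relatively open in $N$, together with the two-sided trick needed to upgrade $gVg^{-1}\subseteq V$ to equality (a real issue, since compact open subgroups can be conjugated properly into themselves) --- then deduce finite index from cocompactness, so that $V$ has finitely many $G$-conjugates, each normal in the open normal core $H_0\supseteq N$, and the normal closure is the product of these finitely many compact subgroups, hence itself compact. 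Your argument is more elementary and self-contained, avoiding the locally elliptic machinery entirely, and it yields slightly more: finiteness of the set of conjugates, and compactness of the normal closure itself rather than of a compact group containing it, using only cocompactness (a point you correctly flag; the paper's first part likewise needs only cocompactness). What the paper's proof buys is brevity: it is a three-line argument given that Platonov's results are already set up in \S\ref{sec:LE} and reused elsewhere in the paper. Your treatment of the totally disconnected refinement --- passing to the quotient by the compact normal subgroup and invoking Lemma~\ref{lem:DiscreteNormal}, then pulling back --- coincides with the paper's.
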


\begin{proof}
Consider the  LE-radical $\LF(N)$, which is normal in $G$.
If $K$ denotes a compact subset of $G$ such that $G = KN$, then every conjugate of $V$ in $G$ is contained in the compact subset $KVK \cap \LF(N)$. It follows that the normal closure of $V$ in $G$ is contained in a compactly generated subgroup of $\LF(N)$, which must thus be compact. Thus $V$ is contained in a compact normal subgroup $W$ of $G$.

If $G$ is totally disconnected, Lemma~\ref{lem:DiscreteNormal} ensures that $G/W$ has a compact open normal subgroup. Its preimage in $G$ is then a compact open normal subgroup containing $V$.
\end{proof}

\subsection{Serre's covolume formula}

\begin{lem} \label{lem:serre}
Let $G$ be a locally compact group acting on the measured space $(X,\mu)$ preserving $\mu$.
Fix a point $x\in X$ and denote $H=\mathrm{Stab}_G(x)$.
For $K_1,K_2$ compact open subgroups of $G$ we have
\begin{equation} \label{eq:comme}
\vol_G(K_2)\cdot\vol_{H}(K_1\cap H)\cdot \mu(K_1x)=\vol_G(K_1)\cdot \vol_H(K_2\cap H)\cdot\mu(K_2x),
\end{equation}
where $\vol_G$ is a left Haar measure on $G$ and $\vol_H$ is a left Haar measure on $H$
(note that Equation~(\ref{eq:comme}) does not depend on the choices of these measures).
\end{lem}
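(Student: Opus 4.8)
The plan is to prove the identity in the division-free form given in \eqref{eq:comme}, and to reduce the general statement to the case of \emph{nested} compact open subgroups. Any two compact open subgroups $K_1,K_2\le G$ contain their intersection $K_1\cap K_2$, which is again compact and open; so if I can establish \eqref{eq:comme} for every pair $K'\le K$ of compact open subgroups, I can apply it to the two pairs $(K_1,\,K_1\cap K_2)$ and $(K_2,\,K_1\cap K_2)$ and then recover \eqref{eq:comme} for $(K_1,K_2)$ by eliminating the common factors that involve $K_1\cap K_2$. The crux of the nested case is to control the ratio of orbit measures $\mu(K'x)$ and $\mu(Kx)$.

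First I would record the two index computations. Since $K$ is compact and $K'$ is open in $K$, the index $[K:K']$ is finite and $\vol_G(K)=[K:K']\,\vol_G(K')$; likewise $K\cap H$ is compact, $K'\cap H$ is open in it, and $\vol_H(K\cap H)=[K\cap H:K'\cap H]\,\vol_H(K'\cap H)$. In particular the final formula is expressed entirely through these indices, which makes its independence from the normalizations of $\vol_G$ and $\vol_H$ manifest. After this reduction, proving \eqref{eq:comme} for $K'\le K$ amounts to the purely geometric identity
\[ [K:K']\,\mu(K'x)=[K\cap H:K'\cap H]\,\mu(Kx). \]

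To establish this identity I would use the compactness of $K$. The orbit map $\pi\colon K\to Kx$, $k\mapsto kx$, factors through a continuous bijection $K/(K\cap H)\to Kx$, which is a homeomorphism because $K$ is compact and $X$ is Hausdorff; thus the fibres of $\pi$ are exactly the cosets of $K\cap H$. Normalizing the Haar measure $\vol_K$ of the compact group $K$ by $\vol_K(K)=1$, the restriction $\mu|_{Kx}$ is a $K$-invariant Radon measure on the transitive $K$-space $Kx$, hence equals $\mu(Kx)$ times the unique $K$-invariant probability measure, namely $\pi_*\vol_K$. Therefore $\mu(K'x)=\mu(Kx)\,\vol_K\big(\pi^{-1}(K'x)\big)$, and a direct check gives $\pi^{-1}(K'x)=K'(K\cap H)$. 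The set $K'(K\cap H)$ is a disjoint union of left cosets of $K'$, and two such cosets $K'h,K'h'$ with $h,h'\in K\cap H$ coincide precisely when $h'h^{-1}\in K'\cap H$; hence there are exactly $[K\cap H:K'\cap H]$ of them, each of $\vol_K$-measure $\vol_K(K')=1/[K:K']$. This yields $\vol_K\big(K'(K\cap H)\big)=[K\cap H:K'\cap H]/[K:K']$ and the displayed identity follows.

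I expect the main obstacle to be the measure-theoretic identification in the last paragraph: one must be certain that $\mu|_{Kx}$ is genuinely proportional to the invariant measure on the homogeneous space $K/(K\cap H)$. This rests on the homeomorphism $K/(K\cap H)\cong Kx$ together with the local finiteness of $\mu$, so that $\mu(Kx)<\infty$; the latter is automatic as $Kx$ is compact. The degenerate case $\mu(Kx)=0$ causes no trouble, since then $\mu|_{Kx}=0$, forcing $\mu(K'x)=0$ and making \eqref{eq:comme} read $0=0$. Everything else reduces to bookkeeping with the finite indices $[K:K']$ and $[K\cap H:K'\cap H]$.
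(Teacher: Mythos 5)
Your proof is correct, and it takes a genuinely different route from the paper's. Both arguments reduce \eqref{eq:comme} to nested pairs of compact open subgroups, but the paper reduces further: it observes that the pairs satisfying \eqref{eq:comme} form an equivalence relation and that every compact open subgroup admits an open \emph{normal} subgroup contained in $K_1\cap K_2$ (a normal core), so that it suffices to treat the case $K_2\lhd K_1$. Normality makes $K_2(K_1\cap H)$ a subgroup, and the paper fibres the orbit $K_1x$ over the finite coset space $K_1/K_2(K_1\cap H)$, the fibres being translates of $K_2x$; this gives $\mu(K_1x)=[K_1:K_2(K_1\cap H)]\,\mu(K_2x)$, and index bookkeeping finishes. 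You avoid normality entirely by measuring the (generally non-group) set $K'(K\cap H)\subset K$ with the Haar measure of $K$: your formula $\vol_K\big(K'(K\cap H)\big)=[K\cap H:K'\cap H]/[K:K']$ is the same double-coset count, executed with Haar measure instead of subgroup indices, and that is exactly what lets you skip both the normal-core reduction and the equivalence-relation chaining. The trade-off is that your argument needs topological hypotheses the statement does not formally impose --- a continuous action on a Hausdorff space $X$ and a locally finite Borel measure $\mu$ --- whereas the paper's proof is purely measure-theoretic and works for an abstract measure-preserving action with measurable orbits; in the paper's application ($X=G/H$ with $\mu$ Radon) your hypotheses hold, so nothing is lost there. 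Two small points to tighten: first, when $K$ is not metrizable a finite Borel measure on $Kx$ need not be Radon, so rather than quoting uniqueness of the invariant probability measure you can note that $K'x$ is clopen in $Kx$ (its preimage $K'(K\cap H)$ is open), hence its indicator is continuous, and the standard Fubini/invariance argument already yields $\mu(K'x)=\mu(Kx)\cdot\pi_*\vol_K(K'x)$ for continuous test functions; second, the cosets $K'h$ are right cosets of $K'$, not left ones --- harmless, since Haar measure on the compact group $K$ is bi-invariant, which your computation implicitly uses.
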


\begin{proof}
Observe that among compact open subgroups of $G$, the collection of pairs satisfying Equation~\ref{eq:comme} forms an equivalence relation. Notice moreover that for any two compact open subgroups $K_1, K_2$ in $G$, the group $K_1$ (resp. $K_2$) has an open normal subgroup $K'_1$ (resp. $K'_2$) contained in  $K_1 \cap K_2$. Therefore, in order to prove that  Equation~(\ref{eq:comme}) holds for all pairs $(K_1, K_2)$ of compact open subgroups of $G$, it suffices to prove (\ref{eq:comme}) for the  subcollection 
$\{(K_1,K_2)\mid K_2\lhd K_1\}$. We assume henceforth that   $K_2$ is normal (and of finite index) in $K_1$.

We get that $K_1x\to K_1/K_2\mathrm{Stab}_{K_1}(x)$ is a $K_1$- equivariant map with fibers isomorphic to $K_2x$. As $\mathrm{Stab}_{K_1}(x)=K_1\cap H$, we get the equation
\[ \mu(K_1x)=[K_1:K_2(K_1\cap H)]\cdot\mu(K_2x).\]
Multiplying corresponding sides with the equation  
\[ [K_1\cap H:K_2 \cap H]
=[K_1\cap H:K_2\cap (K_1\cap H)]  
= [K_2(K_1\cap H):K_2]
\]
and substituting 
\[ [K_1:K_2]=[K_1:K_2(K_1\cap H)]\cdot [K_2(K_1\cap 
H):K_2], \]
we obtain
\[ [K_1\cap H:K_2 \cap H]\cdot \mu(K_1x) = [K_1:K_2]\cdot \mu(K_2x) \]
which is equivalent to Equation~(\ref{eq:comme}).
\end{proof}

The following result is  well known in the special case where $H$ is discrete; it is sometimes called \emph{Serre's covolume formula} in that case. 
A published reference where Serre's covolume formula for discrete groups is explicitly stated and proved is in M.~Bourdon's paper \cite[Proposition~1.4.2(b)]{Bourdon}.

\begin{prop}\label{prop:vol}
Let $G$ be a unimodular  \tdlc   group and fix a compact open subgroup $K<G$.
Let $H \leq G$ be a closed unimodular subgroup.
We normalize the Haar measure $\vol_G$ on $G$ and $\vol_H$ on $H$ so that $\vol_G(K) = \vol_H(K\cap H)=1$.
We let $x\in G/H$ be the base point and 
we take the compatible normalization of the Haar measure $\mu$ on $G/H$ so that $\mu(Kx)=1$. 
Then for any measurable section $s:G/H\to G$, identifying $G$ with $s(G/H)H$, we have $\vol_G=s_*\mu\times \vol_H$. Moreover the total measure of $G/H$ is given by the formula
$$
 \mu(G/H)=\sum_{t\in \Omega}\frac{1}{\vol_H (H_t)},
$$
where $\Omega$ denotes a fundamental domain for the right $H$ action on (the discrete space) $K\backslash G$
and for $t\in \Omega$, $H_t$ denotes the stabilizer of $t$ in $H$ (which is compact open).
\end{prop}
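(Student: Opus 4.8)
The plan is to derive both assertions from a single computation, namely the integration formula built into the standard normalization (the display preceding Lemma~\ref{lem:haarnorm}), applied to indicator functions of compact open sets. The first assertion is, up to a normalization check, a restatement of that integration formula: under the measurable identification $G=s(G/H)H$ given by $(\xi,h)\mapsto s(\xi)h$, the formula $\int_G f\,d\vol_G=\int_{G/H}\int_H f(s(\xi)h)\,d\vol_H(h)\,d\mu(\xi)$ says precisely that $\vol_G=s_*\mu\times\vol_H$, and left-invariance of $\vol_H$ shows the right-hand side is independent of the section $s$. So the only content in the first part is to check that the three prescribed normalizations $\vol_G(K)=\vol_H(K\cap H)=\mu(Kx)=1$ are mutually compatible, i.e. that the measure $\mu$ singled out by $\mu(Kx)=1$ agrees with the measure $\mu_0$ determined by $\vol_G,\vol_H$ through the standard normalization.

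To perform that check I would apply Lemma~\ref{lem:haarnorm} (whose proof uses only the integration formula, hence remains valid for the possibly infinite invariant Radon measure $\mu_0$) to the compact open set $K$, obtaining $\vol_G(K)=\int_{Kx}\vol_H(H\cap z^{-1}K)\,d\mu_0(zH)$. Because $K$ is a subgroup, every coset in the orbit $Kx$ has a representative $z=k_0\in K$, so $z^{-1}K=K$ and the integrand is the constant $\vol_H(K\cap H)=1$; hence $\mu_0(Kx)=\vol_G(K)=1$. Thus $\mu_0$ is exactly the measure normalized by $\mu(Kx)=1$, the two normalizations coincide, and the product decomposition of the first assertion follows.

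For the covolume formula I would decompose $G/H$ into $K$-orbits. Since $K$ is open, $K\backslash G$ is discrete, and the $K$-orbits on $G/H$, the $H$-orbits on $K\backslash G$, and the double cosets $K\backslash G/H$ are all in canonical bijection; consequently $\Omega$ selects exactly one orbit $K\cdot gH\subseteq G/H$ per class, and $\mu(G/H)=\sum_{t\in\Omega}\mu(K\cdot gH)$. To evaluate a single term I would apply Lemma~\ref{lem:haarnorm} to the compact open set $C=Kg$, whose image in $G/H$ is the orbit $K\cdot gH$. Choosing the representatives $z=k_0g$ gives $z^{-1}C=g^{-1}Kg$, so the integrand $\vol_H(H\cap z^{-1}C)=\vol_H(H\cap g^{-1}Kg)=\vol_H(H_t)$ is constant along the orbit, yielding $\vol_G(Kg)=\vol_H(H_t)\cdot\mu(K\cdot gH)$. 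Invoking unimodularity of $G$ in the form $\vol_G(Kg)=\vol_G(K)=1$ then gives $\mu(K\cdot gH)=1/\vol_H(H_t)$, and summation over $\Omega$ produces the stated formula.

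I expect the difficulty to be bookkeeping rather than anything deep. The points demanding care are: that the integrand $\vol_H(H\cap z^{-1}C)$ is well defined on cosets (left-invariance of $\vol_H$) and is genuinely constant along a given $K$-orbit (this is where the subgroup property $k_0^{-1}K=K$ is used); that $H_t=H\cap g^{-1}Kg$ is compact open in $H$, so that $\vol_H(H_t)\in(0,\infty)$ and each orbit carries finite positive measure; and that the appeal to unimodularity of $G$ is legitimate. I would also note that the hypothesis that $H$ is unimodular is used only to guarantee the existence of the invariant measure $\mu$ in the first place, since $\Delta_G|_H=\Delta_H$ is forced while $G$ is unimodular.
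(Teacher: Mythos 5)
Your proof is correct, and for the covolume formula it follows a genuinely different route than the paper, even though both share the same skeleton: decompose $G/H$ into the open $K$-orbits indexed by $K\backslash G/H$, identify the stabilizer of $t=Kg$ under the right $H$-action as $H_t=H\cap g^{-1}Kg$, and use unimodularity of $G$ via $\vol_G(Kg)=\vol_G(g^{-1}Kg)=\vol_G(K)$. The difference lies in how the key identity $\mu(K\cdot gH)=1/\vol_H(H_t)$ is derived. The paper first uses $G$-invariance to write $\mu(Kgx)=\mu(g^{-1}Kgx)$ and then invokes Lemma~\ref{lem:serre} for the commensurable compact open subgroups $g^{-1}Kg$ and $K$, a lemma proved separately by a finite-index counting argument. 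You bypass Lemma~\ref{lem:serre} entirely and instead apply the integration formula of Lemma~\ref{lem:haarnorm} to the indicator of the translate $Kg$, observing that the integrand $\vol_H(H\cap z^{-1}Kg)$ is constant equal to $\vol_H(H\cap g^{-1}Kg)$ along the orbit. Your route is more self-contained (everything reduces to the Weil formula) and has the additional merit of making explicit the normalization-compatibility check --- that the measure produced by the standard normalization of $\vol_G,\vol_H$ really satisfies $\mu(Kx)=1$ --- which the paper leaves implicit when its proof asserts $\mu(Kx)=1/\vol_H(H\cap K)$. What the paper's route buys is economy at the level of the article: Lemma~\ref{lem:serre} is stated for an arbitrary measure-preserving $G$-action, is needed again in the proof of Proposition~\ref{prop:UniformBound}, and packages the commensurability bookkeeping once and for all. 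One caveat, common to both proofs and which you correctly flag: Lemma~\ref{lem:haarnorm} is stated for cofinite $H$ with a probability measure, so one must note that its proof applies verbatim to the possibly infinite invariant Radon measure on $G/H$, whose very existence is where the two unimodularity hypotheses enter.
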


\begin{proof}
For the first statement see Lemma~\ref{lem:haarnorm}.
For the second statement, identifying $\Omega=K\backslash G/H$, we need to show that
for $g\in G$, the $\mu$-measure of the $K$ orbit of $gx\in G/H$ is given by reciprocal of the $\vol_H$-measure of the stabilizer in $H$ of $Kg\in K\backslash G$, which is $g^{-1}Kg\cap H$.
Fix $g\in G$.
By the $G$-invariance of $\mu$ and by applying Lemma~\ref{lem:serre} to the commensurable subgroups $g^{-1}Kg,K<G$, we have
\[ \mu(Kgx)=\mu(g^{-1}Kgx)= \]
\[ \vol_G(g^{-1}Kg)\vol_G(K)^{-1}\vol_H(K\cap H)\vol_H(g^{-1}Kg\cap H)^{-1}\mu(Kx). \]
By our normalization, $\mu(Kx)=1/\vol_H(H\cap K)$ and by unimodularity, $\vol_G(K)=\vol_G(g^{-1}Kg)$.
It follows that indeed
\[ \mu(Kgx)=\vol_H(g^{-1}Kg\cap H)^{-1}. \qedhere \]
\end{proof}

This implies the well known fact that a torsion-free lattice in a \tdlc group must be cocompact. More generally, we have the following immediate consequence of Proposition \ref{prop:vol}.

\begin{cor}\label{cor:bddvol}
Let $H$ be a  totally disconnected, locally compact, unimodular group. Assume that there is a uniform upper bound on the volumes of compact open subgroups of $H$. Then, given any t.d.l.c group $G$   containing $H$ as a closed cofinite subgroup,  the group $H$ is cocompact in $G$.
\end{cor}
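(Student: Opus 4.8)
The plan is to read the conclusion directly off Serre's covolume formula, exactly as the sentence introducing the corollary promises. Before applying Proposition~\ref{prop:vol} I would first record that $G$ is unimodular, since the proposition requires this. As $H$ is unimodular, the existence of a finite $G$-invariant measure on $G/H$ forces $\Delta_G|_H=\Delta_H=1$, so the modular character descends to a continuous positive function $\overline{\Delta}$ on $G/H$ with $\overline{\Delta}(g\cdot y)=\Delta_G(g)\,\overline{\Delta}(y)$. If $\Delta_G$ were non-trivial, say $\Delta_G(g_0)=\lambda>1$, then multiplication by $g_0$ would send the level set $\{\lambda^n\le\overline{\Delta}<\lambda^{n+1}\}$ onto $\{\lambda^{n+1}\le\overline{\Delta}<\lambda^{n+2}\}$, producing infinitely many disjoint measurable sets of equal, hence zero, measure, contradicting that the total measure is positive and finite. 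Thus $\Delta_G\equiv 1$, and Proposition~\ref{prop:vol} applies. I would then fix a compact open subgroup $K<G$ (available since $G$ is \tdlc), a base point $x\in G/H$, and normalize so that $\vol_G(K)=\vol_H(K\cap H)=\mu(Kx)=1$.

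The heart of the argument is a single application of the covolume formula
$$\mu(G/H)=\sum_{t\in\Omega}\frac{1}{\vol_H(H_t)},$$
where $\Omega$ is a set of representatives for the double cosets $K\backslash G/H$ and, for $t=Kg$, the group $H_t=g^{-1}Kg\cap H$ is the compact open stabilizer of $t$ in $H$. By hypothesis there is a constant $C$ with $\vol_H(V)\le C$ for every compact open subgroup $V\le H$; in particular $\vol_H(H_t)\le C$ for every $t$, so each summand is at least $1/C>0$. Since $H$ is cofinite, the left-hand side $\mu(G/H)$ is finite, and a sum of terms uniformly bounded below by a positive constant can be finite only if there are finitely many of them. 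Hence the double coset space $K\backslash G/H$ is finite.

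It remains to pass from finiteness of the double coset space to cocompactness. Writing $G=\bigcup_{i=1}^{n}Kg_iH$ for finitely many $g_i$ and letting $\pi\colon G\to G/H$ be the quotient map, we obtain $G/H=\bigcup_{i=1}^{n}\pi(Kg_i)$, a finite union of compact sets, each $\pi(Kg_i)$ being the continuous image of the compact set $K$ under $k\mapsto\pi(kg_i)$. Therefore $G/H$ is compact, i.e.\ $H$ is cocompact in $G$. The only step requiring any care is the preliminary verification that Proposition~\ref{prop:vol} is applicable, and in particular the unimodularity of $G$; once the covolume formula is in hand the deduction is immediate, its entire content being the elementary observation that a convergent series of positive terms bounded away from $0$ has only finitely many terms.
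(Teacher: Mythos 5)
Your proof is correct and follows essentially the same route as the paper, which presents the corollary as an immediate consequence of Proposition~\ref{prop:vol}: bound each summand $1/\vol_H(H_t)$ below by $1/C$, conclude that $K\backslash G/H$ is finite, and deduce compactness of $G/H$ as a finite union of images of compacts. Your preliminary verification that $G$ is unimodular (so that Proposition~\ref{prop:vol} actually applies) is a point the paper leaves implicit, and it is a worthwhile addition rather than a deviation.
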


An interesting class of examples for groups $H$ satisfying the assumption above is the class of semi-simple groups over non-Archimedean local fields, which have finitely many conjugation classes of maximal compact open subgroups.

\section{Amenable groups with property (M)}\label{sec:PositiveResults}

The goal of this section is to establish  property (M) for various special (and natural) classes of amenable locally compact groups.

\subsection{Compact extensions and property (M)}

The following easy fact shows that property (M) is insensitive to dividing out compact normal subgroups.

\begin{lem}\label{lem:CompactKernel}
Let $G$ be a locally compact group and $K$ be a compact normal subgroup of $G$. Then $G$ has (M) if and only if $G/K$ has (M).
\end{lem}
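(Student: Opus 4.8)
The plan is to prove both implications by transferring cofinite subgroups back and forth across the quotient map $\pi \colon G \to G/K$, using the fact that the preimage of a closed subgroup under a continuous surjection with compact kernel is again closed, and that such maps behave well with respect to covolume and cocompactness.

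First I would handle the forward implication. Suppose $G$ has property (M), and let $\bar H \leq G/K$ be a cofinite subgroup; I want to show $\bar H$ is cocompact. Set $H = \pi^{-1}(\bar H)$, which is a closed subgroup of $G$ containing $K$, and note that $\pi$ induces a $G$-equivariant homeomorphism $G/H \cong (G/K)/\bar H$. Since $K$ is compact, this homeomorphism is proper, so $G/H$ carries a $G$-invariant probability measure (obtained by pulling back the one on $(G/K)/\bar H$) precisely when $(G/K)/\bar H$ does; hence $H$ is cofinite in $G$. By property (M) for $G$, the space $G/H$ is compact, and therefore its homeomorphic copy $(G/K)/\bar H$ is compact, i.e. $\bar H$ is cocompact.

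For the converse, suppose $G/K$ has property (M), and let $H \leq G$ be a cofinite subgroup; I want $H$ cocompact. The natural candidate is to push $H$ forward, but since $HK$ need not be closed in general I would instead work with $\overline{HK}$, or more cleanly observe that $HK$ is closed here because $K$ is compact (the product of a closed set and a compact set is closed). Thus $\pi(H) = HK/K$ is a closed subgroup of $G/K$. The quotient map $G/H \to G/HK$ has compact fibers isomorphic to $HK/H \cong K/(K \cap H)$, so it is proper; consequently the invariant probability measure on $G/H$ pushes forward to an invariant probability measure on $G/HK \cong (G/K)/\pi(H)$, showing $\pi(H)$ is cofinite in $G/K$. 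By property (M) for $G/K$, the quotient $(G/K)/\pi(H)$ is compact, i.e. $G/HK$ is compact. Finally, since $HK/H$ is compact and $G/HK$ is compact, the extension $G/H$ fibers over a compact base with compact fibers and is therefore itself compact, so $H$ is cocompact.

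The main obstacle, and the only point requiring genuine care, is the handling of the non-closedness of products of subgroups: the whole argument rests on the fact that $K$ compact forces $HK$ closed and the relevant fiber bundles to have compact fibers. Everything else is the routine transfer of $G$-invariant measures and compactness along proper $G$-equivariant maps. I expect the forward direction to be entirely straightforward, and the converse to need only the observation that a continuous surjection with compact fibers onto a compact base has compact total space (a standard fact, e.g. via the tube lemma), together with properness to legitimize the push-forward and pull-back of invariant probability measures.
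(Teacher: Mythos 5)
Your overall strategy coincides with the paper's: the forward direction by pulling $\bar H$ back to $\pi^{-1}(\bar H)$ and identifying $G/\pi^{-1}(\bar H)$ with $(G/K)/\bar H$ (the paper notes this direction needs no compactness of $K$ at all; your appeal to properness there is superfluous, since measures and compactness transfer trivially along a $G$-equivariant homeomorphism), and the converse by pushing $H$ forward to $HK/K$, with the compactness of $K$ entering via the closedness of $HK$ --- precisely the paper's remark that compactness of $K$ makes $\pi\colon G\to G/K$ a closed map. Note also that pushing a probability measure forward along a continuous equivariant map requires no properness, so that invocation is superfluous as well.

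There is, however, a genuine flaw in the justification of your final step, which is the only place where the compactness of $K$ really has to do work. Both principles you invoke --- ``compact fibers, so it is proper'' and ``a continuous surjection with compact fibers onto a compact base has compact total space (a standard fact, e.g.\ via the tube lemma)'' --- are false in general: the map $[0,1)\to S^1$, $t\mapsto e^{2\pi i t}$, is a continuous bijection (all fibers are singletons, hence compact) onto a compact base, yet it is not proper and its domain is not compact; the tube lemma concerns product projections, not arbitrary maps with compact fibers. What is true, and what you need, is that the canonical map $q\colon G/H\to G/HK$ is proper, and this must be \emph{derived} from the compactness of $K$ rather than from the compactness of the fibers alone. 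One way: since $K$ is normal, $gHK=KgH$, so the fibers of $q$ are exactly the orbits of the compact group $K$ acting by left translations on $G/H$, and $q$ is the corresponding orbit map. The orbit map of a compact group on a Hausdorff space is closed (if $A$ is closed and $k_i a_i\to x$, pass to a subnet with $k_i\to k\in K$ to get $k^{-1}x\in A$, whence $x\in KA$), and a closed continuous map with compact fibers is proper; properness then yields that $G/H=q^{-1}(G/HK)$ is compact once $G/HK$ is. (Equivalently, one may quote the standard fact that for closed subgroups $H\leq L\leq G$ with $L/H$ compact, the canonical map $G/H\to G/L$ is proper.) With this repair your argument is complete, and it is essentially the paper's two-line proof written out in full.
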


\begin{proof}
The `only if' is clear and does not require the compactness of $K$. For the converse, the point is that the compactness of $K$ ensures that projection $G \to G/K$ is closed.
\end{proof}

Note however, that the examples provided by Theorem~\ref{theo:example} show that property (M) is affected by taking an extension by a compact group. Indeed, discrete groups and abelian locally compact groups all satisfy (M), but Theorem~\ref{theo:example} shows that a locally compact group with a cocompact normal subgroup that is abelian and discrete may fail to satisfy (M).

\subsection{Central extensions}

\begin{prop}\label{prop:CentralExt}
Property (M) is inherited by central extensions.

In other words, if $\tilde G$ is a locally compact group and $Z$ a closed central subgroup such that $\tilde G/Z = G$ has property (M), then $\tilde G$ has property (M).
\end{prop}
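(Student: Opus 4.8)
The plan is to reduce to the quotient $G=\tilde G/Z$ via the projection $\pi\colon \tilde G\to G$, invoke property (M) there, and then dispose of the central fibre by an elementary argument exploiting the centrality of $Z$. So let $\tilde H\le\tilde G$ be a closed cofinite subgroup; we must show that it is cocompact.

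First I would form $L=\overline{\tilde H Z}$. Since $Z=\ker\pi$ and $\pi$ is continuous and open, a routine check gives $L=\pi^{-1}\bigl(\overline{\pi(\tilde H)}\bigr)$, so $L$ is closed, contains $Z$, and $L/Z=\overline{\pi(\tilde H)}=:\bar H$. The continuous $\tilde G$-equivariant surjection $\tilde G/\tilde H\to \tilde G/L$ pushes the invariant probability measure forward, and under the identification $\tilde G/L\cong G/\bar H$ this exhibits $\bar H$ as a cofinite subgroup of $G$. Property (M) of $G$ then forces $\bar H$ to be cocompact in $G$, i.e. $\tilde G/L$ is compact and $L$ is cocompact in $\tilde G$. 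It therefore suffices to prove that $\tilde H$ is cocompact in $L$.

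The key observation is that $\tilde H$ is normal in $L$. Fixing $h\in\tilde H$, the set $\{\ell\in L\st \ell h\ell^{-1}\in\tilde H\}$ is closed, and it contains $\tilde H Z$, because for $\ell=h'z$ with $h'\in\tilde H$ and $z\in Z$ one has $\ell h\ell^{-1}=h'(zhz^{-1})h'^{-1}=h'hh'^{-1}\in\tilde H$, using that $z$ is central. As $\tilde H Z$ is dense in $L$, this closed set is all of $L$, so $L$ normalizes $\tilde H$. Hence $Q:=L/\tilde H$ is a locally compact group (in fact abelian, since the dense image of the central subgroup $Z$ lies in the closed centre of $Q$). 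Because $\tilde H\trianglelefteq L$, the group $Q$ automatically carries an $L$-invariant Haar measure, so all three coset spaces in the tower $\tilde H\le L\le\tilde G$ admit invariant measures. Weil's integration formula then yields $\vol(\tilde G/\tilde H)=\vol(\tilde G/L)\cdot\vol(L/\tilde H)$ for compatibly normalized measures; the left-hand side is finite and $\vol(\tilde G/L)$ is finite and positive since $L$ is cocompact, whence $Q=L/\tilde H$ has finite Haar measure and is therefore compact. Thus $\tilde H$ is cocompact in $L$, and as $L$ is cocompact in $\tilde G$ we conclude that $\tilde H$ is cocompact in $\tilde G$.

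I expect the delicate point to be the transfer of finite covolume from $\tilde G$ down to the intermediate subgroup $L$: in general, passing cofiniteness to an intermediate closed subgroup is subtle because of the existence and modularity conditions governing invariant measures on coset spaces. The decisive simplification is the normality $\tilde H\trianglelefteq L$, which is itself the crux of the argument and flows from the centrality of $Z$ together with the density of $\tilde H Z$ in $L$; this normality both guarantees the existence of an invariant measure on $L/\tilde H$ and makes the integration formula directly applicable, so no further structure theory is required.
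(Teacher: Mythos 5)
Your proof is correct and follows essentially the same route as the paper's: form $L=\overline{\tilde H Z}$, apply property (M) to its image in $G=\tilde G/Z$, use centrality of $Z$ together with density of $\tilde H Z$ to see that $\tilde H$ is normal in $L$, and conclude that the quotient group $L/\tilde H$, having finite Haar measure, is compact. The only difference is presentational: the paper simply asserts that the cofinite subgroup is normal and cofinite in $L$, whereas you justify the finiteness of $\vol(L/\tilde H)$ explicitly via the pushforward measure on $\tilde G/L$ and Weil's integration formula.
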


\begin{proof}
Let $H\le\ti G$ be a cofinite subgroup and denote $\ti H = \overline{ZH}$. Then $\ti H$ is a closed subgroup of finite covolume in $\ti G$ and hence $\ti H/Z$ is cofinite in $\ti G/Z\cong G$. Since $G$ has property $(M)$ we deduce that $(\ti G/Z)/ (\ti H/Z)\cong \ti G/\ti H$ is compact.
Thus, it suffices to show that $\ti H/H$ is compact. But $H$ is normal and cofinite in $\ti H$ hence the group $\ti H/H$ carries a finite $\ti H/H$ invariant measure. Having a finite Haar measure, the latter group is indeed compact. 
\end{proof}

\subsection{Direct products}

\begin{prop}\label{prop:DirectProd}
	Let $G = G_1 \times G_2$ be the direct product of two locally compact groups with Property (M). If $G_1$ is totally disconnected, then $G$ has property (M).
\end{prop}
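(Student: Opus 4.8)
The plan is to combine property (M) of the two factors by peeling off the two coordinates one at a time, gluing the conclusions together with the cocompactness criterion of Lemma~\ref{lem:HUGA}. Let $p_1,p_2$ be the two projections of $G=G_1\times G_2$, let $H\le G$ be cofinite, and set $A=\{1\}\times G_2$, a closed normal subgroup of $G$. First I would dispose of the $G_1$-coordinate. Put $Q_1=\overline{p_1(H)}\le G_1$. The assignment $gH\mapsto p_1(g)Q_1$ is a well-defined (since $p_1(H)\subseteq Q_1$), continuous, $G$-equivariant surjection $G/H\to G_1/Q_1$, so pushing forward the invariant probability measure on $G/H$ shows that $Q_1$ is cofinite in $G_1$. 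As $G_1$ has property (M), $Q_1$ is cocompact in $G_1$, and hence $\tilde G:=Q_1\times G_2$ is cocompact in $G$. Since $H\subseteq\tilde G$ and $\tilde G$ is closed, $H$ remains cofinite in $\tilde G$ (the invariant measure on $G/H$ disintegrates over $G/\tilde G$, and the conditional measure on the fibre $\tilde G/H$ is $\tilde G$-invariant). It therefore suffices to prove that $H$ is cocompact in $\tilde G$.

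Next I would peel off the $G_2$-coordinate. Because $G_1$ is totally disconnected, so is its closed subgroup $Q_1$; choose a compact open subgroup $V_1\le Q_1$ and set $U=V_1\times G_2$, an open subgroup of $\tilde G$. Inside $\tilde G$ one computes $HA=p_1(H)\times G_2$, which is dense because $p_1(H)$ is dense in $Q_1$; moreover $A$ is normal and $U$ is open. Applying Lemma~\ref{lem:HUGA} inside $\tilde G$ reduces the cocompactness of $H$ in $\tilde G$ to the compactness of $UA/(UA\cap H)=U/(U\cap H)$, since $UA=U$. To control this, observe that $V_1\times\{1\}$ is a compact normal subgroup of $U$ with $U/(V_1\times\{1\})\cong G_2$; as $G_2$ has property (M), Lemma~\ref{lem:CompactKernel} gives that $U$ has property (M). Finally $U\cap H$ is cofinite in $U$ (the $U$-orbit of the base point in $\tilde G/H$ is open, hence of positive measure, and carries a $U$-invariant probability measure with stabiliser $U\cap H$), so property (M) of $U$ forces $U/(U\cap H)$ to be compact. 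Tracing back through Lemma~\ref{lem:HUGA}, $\tilde G/H$ is compact, and combined with the cocompactness of $\tilde G$ in $G$ this yields that $H$ is cocompact in $G$.

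The steps requiring care, as opposed to the formal manipulations, are the two places where cofiniteness must be transferred downward: that $Q_1$ is cofinite in $G_1$ (via the pushforward of the invariant measure) and that $H$ stays cofinite in the intermediate group $\tilde G$ and in the open subgroup $U$. The genuinely structural input is the hypothesis that $G_1$ is totally disconnected: it is exactly what provides a compact open subgroup $V_1$ of $Q_1$, so that $U=V_1\times G_2$ is simultaneously open in $\tilde G$ (needed to invoke Lemma~\ref{lem:HUGA} and to preserve cofiniteness) and a compact extension of $G_2$ (needed to import property (M) of $G_2$ through Lemma~\ref{lem:CompactKernel}). I expect this interplay --- arranging an open subgroup whose $G_1$-factor is compact --- to be the crux, with everything else amounting to bookkeeping with invariant measures.
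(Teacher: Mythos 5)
Your proof is correct and follows essentially the same route as the paper's own: project $H$ to $G_1$, use property (M) of $G_1$ to replace $G_1$ by the cocompact closure of that projection, then form $U = U_1 \times G_2$ with $U_1$ compact open in the (totally disconnected) first factor, deduce cocompactness of $U \cap H$ in $U$ from property (M) of $G_2$ via the compact kernel of $U \to G_2$, and conclude with Lemma~\ref{lem:HUGA}. The only difference is that you make explicit the measure-theoretic transfers of cofiniteness (pushforward to $G_1/Q_1$, restriction to the closed cocompact subgroup $\tilde G$, and to the open subgroup $U$) that the paper's terse proof leaves implicit.
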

\begin{proof}
Let $H\le G$ be a cofinite subgroup. Since $G_1$ has (M), the closure of the projection of $H$ to $G_1$ is cocompact. We may thus replace $G_1$ by that closure and assume that  the projection of $H$ to $G_1$ is dense. Let then $U_1$   be a compact open subgroup of $G_1$ and set $U = U_1 \times G_2$. Then the projection $U \to G_2$ is proper, so that $H\cap U$ is cocompact in $U$ because $G_2$ has property (M). It now follows from Lemma~\ref{lem:HUGA} that $H$ is cocompact in $G$.
\end{proof}

\subsection{Discrete groups}

\begin{prop}\label{prop:discrete}
\begin{enumerate}[(i)]
\item Discrete groups have property (M).

\item Compactly generated discrete-by-compact groups have  property (M).

\item A locally compact group with an open normal subgroup having (M) also has (M).
\end{enumerate}

\end{prop}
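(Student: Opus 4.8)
The plan is to prove the three parts of Proposition~\ref{prop:discrete} in sequence, with each part building on the previous ones, and with part (iii) being the main technical statement from which (ii) follows readily (and (i) being essentially immediate).

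\medskip

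\textbf{Part (i).} First I would dispose of the discrete case. If $G$ is discrete and $H \leq G$ is a cofinite subgroup, then the quotient $G/H$ is a discrete space carrying a $G$-invariant probability measure $m$. Since $G$ acts transitively on $G/H$, the measure $m$ must be equidistributed on each orbit, so every point of the discrete countable (or finite) set $G/H$ has equal positive mass. A probability measure can assign equal positive mass to only finitely many points; hence $G/H$ is finite, and in particular compact. This establishes (i).

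\medskip

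\textbf{Part (iii).} This is the heart of the proposition. Suppose $G$ is locally compact with an open normal subgroup $N$ having property (M), and let $H \leq G$ be cofinite. The strategy is to intersect with $N$. Since $N$ is open, $H \cap N$ is a closed subgroup of $N$; I would first argue that $H \cap N$ is cofinite in $N$. The key point is that $N$ open implies the orbit $N \cdot x_0 \cong N/(H\cap N)$ is an open (hence measurable, positive-measure) subset of $G/H$, and the $G$-invariant measure $m$ restricts to give an $N$-invariant finite measure on this orbit; normalizing yields an $N$-invariant probability measure on $N/(H \cap N)$, so $H \cap N$ is cofinite in $N$. By property (M) of $N$, the subgroup $H \cap N$ is then cocompact in $N$, i.e. $N/(H \cap N)$ is compact. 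Finally I would pass from $N$ back to $G$: since $N$ is open and normal with $N/(H\cap N)$ compact, and the images $\{gN : g \in G/H\}$ are countably many cosets each of whose contribution to the total probability is bounded below by $m(N\cdot x_0)/[\text{index considerations}]$, only finitely many cosets $gN$ can meet $G/H$ with positive measure; more cleanly, $G/NH$ is a discrete space carrying a finite invariant measure, hence finite, so $NH$ is cocompact in $G$, and combined with the compactness of $N/(H\cap N)$ this forces $G/H$ to be compact. I expect the passage from $N$ to $G$ to be the main obstacle: one must correctly combine the compactness of $N/(H\cap N)$ with the finiteness of the discrete quotient $G/NH$ (which uses (i) applied to the discrete group $G/N$) to conclude cocompactness of $H$ in $G$.

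\medskip

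\textbf{Part (ii).} A compactly generated discrete-by-compact group $G$ has, by definition, a discrete normal subgroup $D$ with $G/D$ compact. Here I would invoke Lemma~\ref{lem:DiscreteNormal}: since $G$ is compactly generated with $D$ discrete cocompact normal, the product $D\,C_G(D)$ is open of finite index in $G$. The subgroup $C_G(D)$ is open and normal, and $D \cap C_G(D) = Z(D)$ is central in the open finite-index subgroup $D\, C_G(D)$; one checks that this open normal subgroup is (discrete central)-by-compact, so that property (M) follows by combining Proposition~\ref{prop:CentralExt} (central extensions preserve (M)) with part (iii) (open normal subgroups with (M) pass up to (M)) and part (i) (the discrete quotient has (M)). The delicate point is to correctly identify an open normal subgroup of $G$ that is visibly built from abelian/discrete and compact pieces to which (i), (iii), and Proposition~\ref{prop:CentralExt} apply; once such a subgroup is exhibited, part (iii) transports (M) to all of $G$.
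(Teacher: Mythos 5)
Your proposal is correct and follows essentially the same route as the paper: (i) via finiteness of a discrete homogeneous space with an invariant probability measure, (iii) via restricting the invariant measure to the open $N$-orbit and using finiteness of the discrete quotient $G/NH$ (which is exactly the content of the paper's Lemma~\ref{lem:HUGA} combined with (i)), and (ii) via Lemma~\ref{lem:DiscreteNormal} together with the observation that $C_G(D)$ is a central extension of a compact group, handled by Proposition~\ref{prop:CentralExt}. The only organizational difference is that you deduce (ii) from (iii), whereas the paper proves (ii) directly by reducing to $G = H\,C_G(D)$ and invoking Lemma~\ref{lem:HUGA}; the ingredients are identical.
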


\begin{proof}
The assertion (i) follows from the fact that a discrete homogeneous space has finite measure only if it is finite.

For assertion (ii), let $G$ be a locally compact group and $N$ be a  discrete normal cocompact subgroup of $G$. Let also $H \leq G$ be a cofinite subgroup.
Let $U = C_G(N)$, so that $U$ is open by Lemma~\ref{lem:DiscreteNormal}. Then $HU$ is an open cofinite subgroup of $G$, hence of finite index. We may thus assume without loss of generality that $G = HU$. By Lemma~\ref{lem:HUGA}, it suffices to show that $U \cap H$ is cocompact in $U$. Now we observe that $U \cap N$ is contained in the center of $U$. Moreover $U/U \cap N \cong UN/N$ since $U$ is open, so that $U/U \cap N$ is compact. This shows that $U$ is a central extension of a compact group. Therefore $U$ has property (M) by Proposition~\ref{prop:CentralExt}, so $U \cap H$ is cocompact in $U$, as required.

Assertion (iii) follows from (i) together with Lemma~\ref{lem:HUGA}.
\end{proof}

\begin{cor}\label{cor:compact-by-discrete}
Compactly generated \{compact-by-discrete\}-by-compact groups have  property (M).
\end{cor}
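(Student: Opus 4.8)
The plan is to reduce, by quotienting out a single compact normal subgroup, to the situation already handled in Proposition~\ref{prop:discrete}(ii). Write the hypothesis explicitly: $G$ is compactly generated and admits a closed normal subgroup $N$ with $G/N$ compact, where $N$ in turn admits a compact normal subgroup $V$ with $N/V$ discrete. The key observation is that ``$N/V$ discrete'' is precisely the statement that $V$ is relatively open in $N$, so that $V$ is a compact relatively open normal subgroup of the cocompact closed normal subgroup $N \trianglelefteq G$.

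First I would invoke Lemma~\ref{lem:compact-by-discrete-by-compact}, whose hypotheses are met by the previous remark, to produce a compact normal subgroup $W \trianglelefteq G$ containing $V$. This is the one substantive input; the rest is bookkeeping. By Lemma~\ref{lem:CompactKernel}, $G$ has property (M) if and only if $G/W$ does, so it suffices to prove that $G/W$ has (M).

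Next I would identify the structure of $\bar G := G/W$. Since $W$ is compact and $N$ is closed, $NW$ is a closed normal subgroup of $G$ and $NW/W \cong N/(N \cap W)$ as topological groups. As $V \subseteq N \cap W$, the group $N/(N\cap W)$ is a continuous quotient of the discrete group $N/V$, hence discrete; thus $NW/W$ is a discrete normal subgroup of $\bar G$. Moreover $\bar G/(NW/W) \cong G/NW$ is a Hausdorff quotient of the compact group $G/N$, hence compact, so $NW/W$ is cocompact in $\bar G$. Finally $\bar G$ is compactly generated, being a continuous image of $G$. Therefore $\bar G$ is a compactly generated discrete-by-compact group, and Proposition~\ref{prop:discrete}(ii) shows that it has property (M). Combined with the reduction of the previous paragraph, this proves that $G$ has property (M).

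As for the main obstacle: essentially all the real work is hidden inside Lemma~\ref{lem:compact-by-discrete-by-compact} (which packages the use of Platonov's LE-radical together with compact generation) and inside Proposition~\ref{prop:discrete}(ii). Granted these, the corollary is a purely formal assembly; the only points requiring a little care are the topological identification $NW/W \cong N/(N\cap W)$ and the verification that the resulting quotient is genuinely discrete, both of which are routine once one uses that $W$ is compact.
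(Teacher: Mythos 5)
Your proposal is correct and follows essentially the same route as the paper: apply Lemma~\ref{lem:compact-by-discrete-by-compact} to enlarge $V$ to a compact normal subgroup $W$ of $G$, pass to $G/W$ via Lemma~\ref{lem:CompactKernel}, and conclude with Proposition~\ref{prop:discrete}(ii). The only difference is that you spell out the routine verification (the paper's terse ``assume $V$ is trivial, hence $N$ is discrete'') that the image $NW/W \cong N/(N\cap W)$ of $N$ in $G/W$ is a discrete cocompact normal subgroup, which is a fair point of care since $W$ being compact is what makes this identification topological.
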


\begin{proof}
Let $G$ be a locally compact group with a compactly generated  closed cocompact normal subgroup $N$. Assume that $N$ has a compact relatively open normal subgroup $V$. We must show that $G$ has property (M).

By Lemma~\ref{lem:compact-by-discrete-by-compact}, the group $V$ is contained in a compact normal subgroup $W$ of $G$.
By Lemma~\ref{lem:CompactKernel}, we may replace $G$ by $G/W$ and assume therefore that $V$ is trivial. Hence $N$ is discrete, and the desired assertion follows from Proposition~\ref{prop:discrete}(ii).
\end{proof}

\subsection{Nilpotent groups}

\begin{prop}\label{prop:nilpotent}
\begin{enumerate}[(i)]
\item Nilpotent locally compact groups have property (M).

\item Compactly generated nilpotent-by-compact locally compact groups have property (M).
\end{enumerate}

\end{prop}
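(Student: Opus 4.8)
I would treat the two parts separately, proving (i) by induction on the nilpotency class $c$ of the nilpotent locally compact group $G$. The base case $c \le 1$ is the abelian case, which is exactly the Haar measure argument from the introduction: a cofinite $H \le G$ is normal, so $G/H$ is a locally compact group carrying a finite invariant measure, which is then proportional to Haar measure, and a locally compact group of finite Haar measure is compact. For the inductive step, let $Z = Z(G)$ be the centre, which is a closed subgroup. Since $\gamma_c(G) \subseteq Z$, the quotient $G/Z$ is nilpotent of class at most $c-1$ and hence has property (M) by the induction hypothesis. As $G$ is a central extension of $G/Z$ by the closed central subgroup $Z$, Proposition~\ref{prop:CentralExt} then transfers property (M) to $G$.

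For (ii), write $G$ as $N$-by-compact, with $N$ a closed normal nilpotent cocompact subgroup; note that $G$ is amenable. The plan is to reduce to the totally disconnected case via the Benoist--Quint reduction (Theorem~\ref{theo:reduction}): to prove a cofinite $H$ is cocompact, it suffices to prove that the closure $\bar H$ of the image of $H$ in $G/G^\circ$ is cocompact. Here one checks that $\bar H$ is cofinite in $G/G^\circ$ by pushing the invariant probability measure on $G/H$ forward along the continuous $G$-equivariant surjection $G/H \to (G/G^\circ)/\bar H$, obtaining a $G/G^\circ$-invariant probability measure on the target. Moreover $G/G^\circ$ is a compactly generated totally disconnected group that is again nilpotent-by-compact, with $\overline{NG^\circ/G^\circ}$ as a closed normal nilpotent cocompact subgroup: the closure of a nilpotent subgroup is nilpotent, and the corresponding quotient is a continuous image of the compact group $G/N$. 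Thus it is enough to establish property (M) for compactly generated nilpotent-by-compact \tdlc groups.

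So assume in addition that $G$ is totally disconnected. By Proposition~\ref{prop:CompactGen} the cocompact normal subgroup $N$ is compactly generated, whence Proposition~\ref{prop:nil-Willis} furnishes a compact open normal subgroup $V$ of $N$. This $V$ is a compact relatively open normal subgroup of the cocompact normal subgroup $N$, so Lemma~\ref{lem:compact-by-discrete-by-compact} places it inside a compact open normal subgroup $W$ of $G$. Then $G/W$ is discrete and, as a quotient of a compactly generated group, finitely generated, so it has property (M) by Proposition~\ref{prop:discrete}(i). Since $W$ is compact and normal, Lemma~\ref{lem:CompactKernel} carries property (M) back to $G$, completing this case and hence (ii).

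I expect the main obstacle to be the reduction to the totally disconnected case rather than the \tdlc argument itself: one must verify with care that cofiniteness descends to $\bar H$ and that the nilpotent-by-compact structure is preserved under the projection to $G/G^\circ$, i.e. that passing to closures keeps the relevant subgroup nilpotent and cocompact. Once $G$ is totally disconnected the proof is short, since Willis's theorem (Proposition~\ref{prop:nil-Willis}) supplies precisely the compact open normal subgroup that Lemma~\ref{lem:compact-by-discrete-by-compact} requires, and compact generation is what allows the final quotient $G/W$ to be finitely generated discrete. Note that, with this routing, part (ii) does not actually invoke part (i): the connected directions are entirely absorbed by the Benoist--Quint reduction.
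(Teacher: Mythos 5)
Your proposal is correct and follows essentially the same route as the paper: part (i) by induction on the nilpotency class via Proposition~\ref{prop:CentralExt}, and part (ii) by reducing to the totally disconnected case through Theorem~\ref{theo:reduction} (checking, as you do, that cofiniteness and the nilpotent-by-compact structure descend to $G/G^\circ$) and then combining Willis's theorem with Lemma~\ref{lem:compact-by-discrete-by-compact} and Lemma~\ref{lem:CompactKernel}. The only cosmetic difference is the endgame: the paper quotes Corollary~\ref{cor:compact-by-discrete}, which uses the compact-normal conclusion of Lemma~\ref{lem:compact-by-discrete-by-compact} together with Proposition~\ref{prop:discrete}(ii), whereas you exploit the stronger compact \emph{open} normal conclusion available in the totally disconnected case so that only Proposition~\ref{prop:discrete}(i) is needed --- the same ingredients, packaged slightly more directly.
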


\begin{proof}
(i)  follows by induction on the nilpotency degree,  using Proposition~\ref{prop:CentralExt}.

For (ii), we need to show that  compactly generated nilpotent-by-compact locally compact groups  have property (M). We first observe that nilpotent-by-compact groups are amenable. Moreover, the class of nilpotent-by-compact groups is stable under taking quotient groups: indeed, given $G$ with a closed cocompact nilpotent subgroup $N$ and $H$ a closed normal subgroup of $G$, we see that $N \leq \overline{NH} \leq G$ so that $\overline{NH}$ is cocompact in $G$. Since the image of $N$ in $G/H$ is nilpotent, so is its closure. So $\overline{NH}/H$ is a closed cocompact nilpotent normal subgroup of $G/H$.

In view of those observations, it suffices by Theorem~\ref{theo:reduction} to show that compactly generated nilpotent-by-compact \tdlc groups  have property (M). By \cite{Willis_nil}, a compactly generated nilpotent \tdlc group is compact-by-discrete, so that the desired assertion now follows from Corollary~\ref{cor:compact-by-discrete}.
%
\end{proof}

\subsection{Solvable Noetherian groups}

A locally compact group is called \textbf{Noetherian} if it satisfies an ascending chain condition on open subgroups. A condition which is equivalent to Noetherianity is that  every open subgroup is compactly generated. We warn the reader that our choice of terminology disagrees with the terminology introduced by Y. Guivarc'h, who chose to call a locally compact group \emph{Noetherian} if each of its \emph{closed} subgroup is compactly generated (see \cite{Guivarch}*{\S{}III, p.~346}) . The latter condition is of course considerably stronger than the   Noetherian condition as defined above. An argument in favour of our own choice is that a locally compact group which is Noetherian in Guivarc'h's sense can however fail to satisfy the ascending chain condition on closed subgroups: the simplest examples  are provided by $G = \RR$ or $G = S^1$. However, we shall observe in Proposition~\ref{prop:Polycyclic} below that a solvable locally compact group which is Noetherian in the sense above is also Noetherian in the sense of Guivarc'h. 

Henceforth, we exclusively refer to the term \emph{Noetherian} in the sense introduced above, namely as the ACC on open subgroups. Clearly, every connected group is Noetherian, since it has no proper open subgroup. Compact groups are also Noetherian since open subgroups have finite index. A theorem of Tits, proved by Prasad, asserts that in a simple locally compact group over a local field, every proper open subgroup is compact (this can be deduced from the Howe--Moore property); clearly this implies Noetherianity. Notice moreover that Noetherianity is inherited by quotients (since the projection map is open). However it is not inherited by closed subgroups in general.  For a general result on the structure of (non-discrete) locally compact Noetherian groups, we refer to \cite[Theorem~C]{CaMo}.

A discrete solvable group is Noetherian if and only if it is polycyclic (see \cite[Ch. 3 \& 4]{Raghu} for generalities about polycyclic groups).

The following result generalizes Mostow's theorem.

\begin{thm}\label{thm:Noetherian}
\{Solvable Noetherian\}-by-compact locally compact groups have  property $(M)$.
\end{thm}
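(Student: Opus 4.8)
The plan is to run the same three-step scheme as in the nilpotent case (Proposition~\ref{prop:nilpotent}(ii)): reduce to the totally disconnected setting, establish that the cocompact normal subgroup is compact-by-discrete, and then quote Corollary~\ref{cor:compact-by-discrete}. For the first step, a \{solvable Noetherian\}-by-compact group is amenable, being a compact extension of a solvable group, and the class is preserved under passing to $G/G^\circ$: solvability is inherited by continuous images, and the image of $N$ stays Noetherian (here one uses that $N$, being an open subgroup of itself in a Noetherian group, is compactly generated, and that it remains cocompact, cf. Proposition~\ref{prop:CompactGen}). Hence, by Theorem~\ref{theo:reduction}, it suffices to prove property (M) for $G/G^\circ$, so I may assume $G$ is \tdlc and $N$ is a compactly generated solvable Noetherian \tdlc group.

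The heart of the matter is the structural claim that such an $N$ is \emph{compact-by-discrete}, i.e. has a compact open normal subgroup; this is the exact solvable analogue of Willis' theorem on nilpotent \tdlc groups invoked in the nilpotent case. I would obtain it through Platonov's LE-radical $\LF(N)$ (\S\ref{sec:LE}). On the one hand, $\LF(N)$ is locally elliptic, and a compactly generated locally elliptic group is compact, since by Platonov's characterization a compact generating set is already contained in a single compact subgroup; thus $\LF(N)$ is compact as soon as it is compactly generated. On the other hand, $N/\LF(N)$ is solvable, Noetherian and of trivial LE-radical, and I claim that any such group is discrete: arguing by induction on the derived length, the last nontrivial term of the closed derived series is abelian, hence (being compactly generated) of the form $\ZZ^a \times K'$, and its characteristic compact part $K'$ lies in the trivial LE-radical and so vanishes, which bootstraps to discreteness of the whole group. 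Granting both points, $\LF(N)$ is compact and open (the latter because $N/\LF(N)$ is discrete), so $N$ is indeed compact-by-discrete.

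The main obstacle is precisely the compact generation of the closed subgroups appearing above, namely $\LF(N)$ and the terms of the closed derived series: closed subgroups of a Noetherian group need not be compactly generated in general, and this is exactly where solvability must be used in an essential way. The relevant fact is that a solvable Noetherian group is Noetherian in the strong sense of Guivarc'h — every closed subgroup is compactly generated — which I would isolate as Proposition~\ref{prop:Polycyclic} and prove by a separate induction on the derived length. This is the genuinely delicate ingredient; once it is in hand, the two points of the previous paragraph close and the structural claim follows.

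Finally, with $N$ a compactly generated, compact-by-discrete, cocompact closed normal subgroup of $G$, the group $G$ is a compactly generated \{compact-by-discrete\}-by-compact group, so Corollary~\ref{cor:compact-by-discrete} yields property (M). Combined with the reduction of the first step, this proves the theorem.
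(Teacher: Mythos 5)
Your global architecture coincides with the paper's: reduce to the totally disconnected case via Theorem~\ref{theo:reduction}, prove that a solvable Noetherian \tdlc group has a compact open normal subgroup (this is exactly Proposition~\ref{prop:SolubleNoether}), and conclude with Corollary~\ref{cor:compact-by-discrete}. The gap is in your proof of the structural step. You make it rest on Proposition~\ref{prop:Polycyclic} (every closed subgroup of a solvable Noetherian group is compactly generated), which you propose to prove ``by a separate induction on the derived length.'' You never give that induction, and it does not run as stated: if $A$ is the last nontrivial term of the closed derived series and $H\leq G$ is closed, then $HA$ need not be closed when $A$ is merely closed rather than open, so the image of $H$ in $G/A$ need not be closed and the inductive hypothesis on $G/A$ cannot be applied to $H/(H\cap A)$. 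This obstruction is genuine; indeed, the paper's logical order is the \emph{reverse} of yours: Proposition~\ref{prop:Polycyclic} is deduced \emph{from} Proposition~\ref{prop:SolubleNoether}, because the compact open normal subgroup produces an almost connected \emph{open} normal subgroup to which \cite{HofmannNeeb} applies, and the inheritance argument works precisely because that subgroup is open (then $HN$ is open, hence closed, and $H/(H\cap N)$ is discrete). So your plan either leaves its key lemma unproved or, if one tries to fill it using the structure theorem, becomes circular.

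The idea you are missing is how to get compact generation of $A$ with no Guivarc'h-type input at all. The paper applies the induction hypothesis to $G/A$ \emph{first}: this yields a compact open normal subgroup $K\leq G/A$, whose preimage $B$ is an open normal subgroup of $G$ --- hence compactly generated by Noetherianity --- containing $A$ \emph{cocompactly}; then $A$ is compactly generated by Proposition~\ref{prop:CompactGen}. From there $A$ is compact-by-discrete, Lemma~\ref{lem:compact-by-discrete-by-compact} gives a compact open normal subgroup $V$ of $B$, and one checks that $\LF(B)$ is compact, open and normal in $G$. There is also a second, smaller gap in your sketch of ``trivial LE-radical implies discrete'': once you have reduced $A$ to $\ZZ^a$, the quotient $G/A$ need not have trivial LE-radical, so your induction does not close either; the paper's formulation (prove the existence of a compact open normal subgroup, rather than discreteness of the quotient by the LE-radical) sidesteps this. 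Your reduction to the \tdlc case and your final appeal to Corollary~\ref{cor:compact-by-discrete} are fine; it is the proof of Proposition~\ref{prop:SolubleNoether} that needs to be reworked along the lines above.
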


It can be seen that every compactly generated nilpotent locally compact group is Noetherian (this follows by combining the Noetherianity of solvable Lie groups with the main result of \cite{Willis_nil}). However, not every compactly generated solvable locally  compact group is Noetherian: this is already seen within discrete groups, since not every finitely generated metabelian group is polycyclic, as highlighted by the lamplighter.

\medskip
The proof of Theorem~\ref{thm:Noetherian} relies on the following observation.

\begin{prop}\label{prop:SolubleNoether}
Let $G$ be a solvable \tdlc group. If $G$ is Noetherian, then it admits a compact open normal subgroup.
\end{prop}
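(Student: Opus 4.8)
The plan is to reduce everything to a single claim: \emph{a solvable \tdlc Noetherian group with trivial LE-radical is discrete.} Granting this, the proposition follows quickly. Set $R = \LF(G)$. Since $R$ is a closed characteristic subgroup (see §\ref{sec:LE}), the quotient $G/R$ is again solvable, \tdlc and Noetherian (Noetherianity passes to quotients because the projection is open), and it has trivial LE-radical by the last assertion of §\ref{sec:LE}. By the claim, $G/R$ is discrete, so $R$ is open in $G$. As an open subgroup of a Noetherian group, $R$ is compactly generated; being also locally elliptic, it is compact, since a compactly generated locally elliptic group coincides with the compact subgroup containing any compact generating set. Thus $R$ is a compact open normal subgroup, as desired.

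To prove the claim I would induct on the derived length $d$ of $Q$ (solvable, \tdlc, Noetherian, $\LF(Q)=1$). If $d \le 1$ then $Q$ is abelian; a compact open subgroup $U$ (which exists as $Q$ is \tdlc) is normal and locally elliptic, hence contained in $\LF(Q)=1$, so $U=1$ and $Q$ is discrete. For $d\ge 2$, let $A = \overline{Q^{(d-1)}}$ be the closure of the last nontrivial derived subgroup. Then $A$ is closed, abelian, and characteristic in $Q$. Since $\LF(A)$ is characteristic in $A$, it is normal and locally elliptic in $Q$, hence lies in $\LF(Q)=1$; so $A$ has trivial LE-radical, and by the $d\le 1$ case (applied to the abelian group $A$) the group $A$ is discrete.

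For the inductive step proper, pass to $\overline Q = Q/A$, which is solvable, \tdlc, Noetherian of derived length $\le d-1$, and set $\overline R = \LF(\overline Q)$. Then $\overline Q/\overline R$ has trivial LE-radical and derived length $\le d-1$, so by induction it is discrete; hence $\overline R$ is open in $\overline Q$. Let $R'\le Q$ be the preimage of $\overline R$: it is open and normal in $Q$, contains $A$, and satisfies $R'/A = \overline R$. Now the key observations combine: $R'$ is compactly generated (being open in the Noetherian group $Q$), so $\overline R = R'/A$ is compactly generated and locally elliptic, hence compact; therefore $A$ is a discrete cocompact normal subgroup of the compactly generated \tdlc group $R'$. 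Lemma~\ref{lem:DiscreteNormal} then furnishes a compact open normal subgroup $W$ of $R'$. Finally, $R'$ is normal in $Q$, so $\LF(R')$ is normal and locally elliptic in $Q$ and hence trivial; the compact normal subgroup $W\le R'$ therefore satisfies $W\subseteq \LF(R')=1$, forcing $W=1$. As $W$ is open in $R'$, the group $R'$ is discrete, and since $R'$ is open in $Q$, so is $Q$. This closes the induction.

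The main obstacle is that neither hypothesis propagates naively through the derived series: Noetherianity is not inherited by closed subgroups (so $A$ need not be Noetherian), and triviality of the LE-radical is not inherited by quotients (so $Q/A$ need not have trivial LE-radical). The device that resolves both is to extract information from the \emph{bottom} of the derived series — where the characteristic subgroup $A$ does inherit a trivial LE-radical and is forced to be discrete — while re-dividing the top quotient by its own LE-radical before invoking the inductive hypothesis. The crucial step is recognizing that the resulting open normal subgroup $R'$ has a \emph{compact} quotient $R'/A$, which is exactly what converts the problem into the compactly-generated/discrete-cocompact-normal setting governed by Lemma~\ref{lem:DiscreteNormal}.
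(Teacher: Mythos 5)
Your proof is correct, and while it shares the paper's skeleton---induction on derived length, peeling off the closure $A$ of the last nontrivial derived term, applying the inductive hypothesis to $Q/A$, pulling back an open normal subgroup in which $A$ is cocompact, and using Noetherianity to make that subgroup compactly generated---the packaging is genuinely different. The paper inducts directly on the conclusion ``$G$ has a compact open normal subgroup'': in that setting $A$ is only compact-by-discrete (a compactly generated abelian \tdlc group), so the paper must pass through Lemma~\ref{lem:compact-by-discrete-by-compact} to produce a compact open normal subgroup $V$ of the pullback $B$, and then argue that $B/V$ is a finitely generated virtually abelian discrete group with finite LE-radical, so that $\LF(B)$ is the desired compact open normal subgroup of $G$. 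You instead reduce first to the statement ``trivial LE-radical implies discrete'' and induct on that; the payoff is that your $A$ is honestly discrete, so Lemma~\ref{lem:DiscreteNormal} applies directly, and the closing step is purely formal ($W \le \LF(R') \le \LF(Q) = 1$). The price is the wrinkle you correctly flagged: triviality of the LE-radical does not pass to quotients, so you must divide $Q/A$ by its own LE-radical before invoking the inductive hypothesis. Both arguments rest on the same two pillars (open subgroups of Noetherian groups are compactly generated; compactly generated \tdlc groups with a discrete cocompact normal subgroup admit compact open normal subgroups), but yours trades the compact-by-discrete analysis for a cleaner formal use of the LE-radical, while the paper's is more direct in that it exhibits the compact open normal subgroup without any preliminary reduction.
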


\begin{proof}
We work by induction on the solubility degree of $G$, the case of abelian groups being trivial.

Let thus $A < G$ be the last non-trivial term of the derived series. Since $G/A$ is solvable, Noetherian and totally disconnected, the induction hypothesis implies that it has a compact open normal subgroup $K$. Let $B < G$ be the preimage of $K$ in $G$. Thus $B$ is an open normal subgroup of $G$ containing $A$ as a closed cocompact normal subgroup. Note that $B$ is compactly generated since $G$ is Noetherian. Thus $A$ is compactly generated as well by Proposition~\ref{prop:CompactGen}.  Since $A$ is abelian and totally disconnected, it is compact-by-discrete. Now Lemma~\ref{lem:compact-by-discrete-by-compact} ensures that $B$ has a compact open normal subgroup, say $V$. The quotient group $AV/V \cong A/A \cap V $ is finitely generated abelian. Moreover $AV$ is open and cocompact in $B$, hence of finite index. It follows that the discrete group $B/V$ is finitely generated and virtually abelian. Its LE-radical is thus finite. Since $V$ is compact, we have $\LF(B/V) = \LF(B)/V$. This implies that  $\LF(B)/V$ is finite, so that $\LF(B)$ is compact. Hence  $\LF(B)$ is a compact open normal subgroup of $G$.
\end{proof}

We can now complete the proof of the theorem.

\begin{proof}[Proof of Theorem~\ref{thm:Noetherian}]
Let $G$ be a locally compact group with a closed cocompact normal subgroup $N$ which is solvable Noetherian.
The properties of solvability and Noetherianity are both preserved under passing to the closure of the image under a continuous homomorphism of locally compact groups. Hence the closure of the image of $N$ in $G/G^\circ$ is a closed cocompact normal subgroup which is solvable Noetherian.  By Theorem~\ref{theo:reduction}, we may thus assume that  $G$ is totally disconnected. Proposition~\ref{prop:SolubleNoether} then guarantees that $G$ is \{compact-by-discrete\}-by-compact,  and the conclusion follows from Corollary~\ref{cor:compact-by-discrete}.
\end{proof}

We record the following result, although it won't be used in the sequel.
\begin{prop}\label{prop:Polycyclic}
Let $G$ be a Noetherian locally compact group. If $G$ is solvable, then every closed subgroup is compactly generated.
\end{prop}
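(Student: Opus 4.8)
The plan is to prove Proposition~\ref{prop:Polycyclic} by induction on the solvability degree of $G$, reducing the statement about closed subgroups to the already-established structural result Proposition~\ref{prop:SolubleNoether}. The key point is that Noetherianity (the ACC on \emph{open} subgroups) does not directly control closed subgroups, but via the compact open normal subgroup furnished by Proposition~\ref{prop:SolubleNoether} one can bridge the gap between the open-subgroup condition and closed subgroups.

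First I would observe that the identity component $G^\circ$ is connected and solvable, hence a solvable Lie group, so it and all its closed subgroups are compactly generated by classical Lie theory; thus, using Theorem~\ref{theo:reduction}-style reductions, it suffices to treat the totally disconnected case. So I would assume $G$ is a solvable Noetherian \tdlc group. By Proposition~\ref{prop:SolubleNoether}, $G$ has a compact open normal subgroup $U$. Now let $H \leq G$ be any closed subgroup. The intersection $H \cap U$ is a closed subgroup of the compact (hence Noetherian, indeed compactly generated) group $U$, so $H \cap U$ is compact, in particular compactly generated. The image of $H$ in the discrete quotient $G/U$ is a subgroup of a finitely generated virtually-(solvable Noetherian) discrete group; here I would invoke that a discrete solvable group is Noetherian if and only if it is polycyclic, and that \emph{every} subgroup of a polycyclic group is again polycyclic, hence finitely generated.

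The heart of the argument is then to assemble compact generation of $H$ from compact generation of the two pieces. Since $U$ is open, $H \cap U$ is open in $H$, and the quotient $H/(H \cap U) \cong HU/U$ is a finitely generated discrete group (a subgroup of the polycyclic group $G/U$). Choosing a compact generating set for $H \cap U$ together with finitely many coset representatives lifting generators of $H/(H\cap U)$ yields a compact generating set for $H$: this is the standard fact that an extension of a finitely generated (discrete) group by a compactly generated open subgroup is compactly generated. Thus $H$ is compactly generated, as required.

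The main obstacle I anticipate is justifying that the discrete quotient $G/U$ is genuinely polycyclic rather than merely Noetherian-as-a-\tdlc-group, and that \emph{closed} subgroups of $G$ descend to \emph{all} subgroups of $G/U$. The first is handled by the remark in the excerpt that a discrete solvable group is Noetherian if and only if it is polycyclic, combined with solvability of $G/U$ (a continuous quotient of a solvable group). The second is automatic since $U$ is open: any subgroup of $G/U$ is the image of its full preimage, which is closed in $G$ because $U$ is open and hence closed. The subtlety to watch is that polycyclicity, unlike Noetherianity for abstract groups, passes to \emph{all} subgroups (not just finite-index or normal ones), which is exactly the property needed to conclude that the image of an arbitrary closed $H$ in $G/U$ is finitely generated.
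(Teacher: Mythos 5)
There is a genuine gap, and it is concentrated in your very first reduction step. You assert that $G^\circ$ is ``connected and solvable, hence a solvable Lie group'' --- this is false: a connected solvable locally compact group need not be a Lie group (a solenoid, or an infinite-dimensional torus, is a compact connected abelian group that is not Lie, and such a group is perfectly admissible as $G^\circ$ here). More seriously, the phrase ``Theorem~\ref{theo:reduction}-style reductions'' does not produce a reduction to the totally disconnected case: Theorem~\ref{theo:reduction} is about cofinite subgroups and property (M), and says nothing about compact generation of arbitrary closed subgroups. If you try to make the reduction explicit, it breaks in two places. First, for a closed subgroup $H \leq G$, the image $HG^\circ/G^\circ$ in $G/G^\circ$ need not be closed ($G^\circ$ is not compact), so your \tdlc case cannot be applied to it. Second, the abstract quotient $H/(H \cap G^\circ)$, while solvable and \tdlc, need not be \emph{Noetherian} --- the paper itself warns that Noetherianity is not inherited by closed subgroups --- so your \tdlc result does not apply to it either. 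There is no route from ``every closed subgroup of $G^\circ$ is compactly generated'' plus ``the \tdlc case holds'' to the conclusion, without an additional idea.

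The missing idea, which is how the paper argues, is to avoid quotienting by $G^\circ$ altogether: apply Proposition~\ref{prop:SolubleNoether} to the solvable Noetherian \tdlc group $G/G^\circ$ and pull the resulting compact open normal subgroup back to an \emph{almost connected open} normal subgroup $N \trianglelefteq G$. Closed subgroups of $N$ are compactly generated by the theorem of Hofmann--Neeb \cite{HofmannNeeb} (this is the genuine input needed for the connected part; ``classical Lie theory'' does not suffice since $N$ need not be Lie). Then one runs exactly your assembly argument with $N$ in place of $U$: for $H \leq G$ closed, $H \cap N$ is closed in $N$, hence compactly generated, and since $N$ is open, $H/(H \cap N) \cong HN/N$ is a subgroup of the \emph{discrete} polycyclic group $G/N$, hence finitely generated; discreteness of $G/N$ is what makes the non-closedness of $HN$ irrelevant. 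Your treatment of the \tdlc case (compact open $U$, $H \cap U$ compact, image in $G/U$ polycyclic, assemble) is correct and is structurally identical to the paper's final assembly --- the gap is entirely in how the connected part is bridged.
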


\begin{proof}
By \cite{HofmannNeeb}, every closed subgroup of an almost connected solvable locally compact group is compactly generated.  By Proposition~\ref{prop:SolubleNoether}, the group $G$ has an almost connected open normal subgroup $N$. Thus the open normal subgroup $N$ and the discrete quotient $G/N$ both have the property that every closed subgroup is compactly generated. Thus the property is inherited by $G$, as desired.
\end{proof}

The following consequence is immediate (compare \cite{Guivarch}*{Th\'eor\`eme III.1}).

\begin{cor}\label{cor:Chain}
Let $G$ be a Noetherian solvable locally compact group. Then $G$ has a subnormal chain $G=G_0\rhd G_1\rhd G_2,\rhd\ldots\rhd G_n=\{1\}$ such that $G_{i-1}/G_{i}$ is either compact abelian or isomorphic to $\ZZ$ or $\RR$, for all $0<i\leq  n$. \qed
\end{cor}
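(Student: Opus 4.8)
The plan is to deduce Corollary~\ref{cor:Chain} directly from Proposition~\ref{prop:Polycyclic} together with the structure Proposition~\ref{prop:SolubleNoether}, proceeding by induction on the solubility degree of $G$. The target is a subnormal chain whose successive quotients are each either compact abelian, or $\ZZ$, or $\RR$. The key structural input is that a solvable Noetherian group has, by Proposition~\ref{prop:SolubleNoether} (after quotienting out the connected component, using Theorem~\ref{theo:reduction}'s reduction philosophy), a very controlled architecture: an almost connected open normal subgroup whose successive layers are either compact or copies of $\RR$, sitting inside a group whose discrete quotient is polycyclic.

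First I would separate the connected and totally disconnected behaviour. Let $G^\circ$ be the identity component; since $G$ is solvable and almost connected groups are well understood, $G^\circ$ is a connected solvable Lie group, and such a group admits a chain with successive quotients isomorphic to $\RR$ (this is the classical Lie-theoretic fact underlying Mostow's theorem, applied to the connected solvable radical). So the connected part contributes only $\RR$-factors. For the totally disconnected quotient $G/G^\circ$, Proposition~\ref{prop:SolubleNoether} furnishes a compact open normal subgroup $K$; this is compact (hence contributes a compact-abelian-by-compact block, refinable by the abelian case), while the discrete quotient $(G/G^\circ)/K$ is a finitely generated solvable group, and being Noetherian it is polycyclic by the standard characterization recalled just before Theorem~\ref{thm:Noetherian}. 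A polycyclic group by definition has a subnormal chain with infinite cyclic or finite cyclic quotients, i.e. quotients isomorphic to $\ZZ$ or finite (hence compact abelian).

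The second step is to assemble these three chains --- the $\RR$-chain from $G^\circ$, the compact block from $K$, and the $\ZZ$-or-finite chain from the polycyclic discrete quotient --- into a single subnormal chain for $G$ by pulling back along the projections $G \to G/G^\circ \to (G/G^\circ)/K$. Each refinement step preserves subnormality because one is taking preimages of a subnormal chain under a quotient map, and the successive quotients are unchanged by such pullbacks (an isomorphism theorem argument). I would then verify that every quotient landing in the list is of the allowed type: $\RR$ from the Lie part, $\ZZ$ from the free abelian sections of the polycyclic part, and compact abelian from both the compact open subgroup $K$ and the finite cyclic sections. Here one mild subtlety is that finite abelian groups count as compact abelian, so finite quotients need not be separately itemized.

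The main obstacle I expect is organizational rather than deep: one must be careful that the almost-connected open normal subgroup supplied by the proof of Proposition~\ref{prop:Polycyclic} really does refine into a chain of the stated shape, in particular that the connected solvable Lie group $G^\circ$ yields only $\RR$ and compact-abelian quotients and no genuinely mixed layer. This reduces to the standard fact that a simply connected solvable Lie group is an iterated extension of copies of $\RR$, together with the observation that a connected compact abelian group (a torus) is itself compact abelian and hence already an allowed quotient without further refinement. Once these elementary Lie-theoretic and polycyclic facts are in hand, concatenating the three chains and checking subnormality by the correspondence theorem completes the argument; this is why the statement is recorded as an immediate consequence.
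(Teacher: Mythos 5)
Your overall architecture --- pull back a polycyclic chain from the discrete quotient $(G/G^\circ)/K$, insert the compact open normal subgroup $K$ furnished by Proposition~\ref{prop:SolubleNoether}, and refine the connected part separately, concatenating via preimages --- is exactly the natural reading of why the paper calls this corollary immediate (the paper supplies no argument at all), and the subnormality/pullback bookkeeping in your second step is fine.

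However, your treatment of the connected part contains a genuine error: it is \emph{not} true that the identity component of a solvable (even Noetherian, even abelian) locally compact group is a Lie group. A solenoid, or the infinite-dimensional torus $(S^1)^{\NN}$, is a compact connected abelian group, hence Noetherian and solvable, but is not a Lie group; so the sentence ``$G^\circ$ is a connected solvable Lie group'' is false, and the subsequent claim that the connected part ``contributes only $\RR$-factors'' fails even for $G^\circ = S^1$. Your attempted repair via ``simply connected solvable Lie groups are iterated extensions of $\RR$, plus tori'' does not patch this, because a connected solvable Lie group need not sit in a normal chain of that shape: the motion group $\RR^2 \rtimes \mathrm{SO}(2)$ has trivial maximal compact \emph{normal} subgroup and is not simply connected. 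The correct repair is standard but must be said: by the Gleason--Yamabe theorem, $G^\circ$ has a compact normal subgroup $K_0$ with $G^\circ/K_0$ a connected solvable \emph{Lie} group; one refines $G^\circ/K_0$ by iterating the closed commutator subgroup (each successive quotient is a nontrivial connected abelian Lie group, hence refines into $\RR$'s and circles), and one refines the compact solvable groups $K_0$ and $K$ by their \emph{closed derived series}, whose successive quotients are compact abelian. This last point is also where your phrase ``compact-abelian-by-compact block, refinable by the abelian case'' hides the actual argument: a compact solvable group is generally nonabelian (e.g.\ a profinite solvable group), and the closed derived series is the mechanism that produces the compact abelian layers. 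With these two corrections the proof goes through.
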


\subsection{Locally elliptic groups with the bounded index property}

In all the classes of groups we have treated so far, the proof proceeded in two steps: first invoking Theorem~\ref{theo:reduction} to reduce to totally disconnected groups, and then observe that the totally disconnected members of the class of groups in question were \{compact-by-discrete\}-by-compact, so that the required conclusion followed via Corollary~\ref{cor:compact-by-discrete}. We shall now establish property (M) for a particular class of locally elliptic groups; this will happen to be a crucial tool in establishing the subsequent results in the rest of this chapter.

\begin{prop}\label{prop:criterion}
Let $O$ be a locally compact group which is the union of a countable ascending chain of compact open subgroups $O_1 \leq O_2 \leq O_3 \leq \dots$. 
Let $H<O$ be a closed subgroup. 
Let $\vol_O$ be a Haar measure on $O$ and $\vol_H$ be a Haar measure on $H$. 
Set  $H_n=H\cap O_n$ and 
$\gamma_n=\vol_O(O_n)/\vol_H(H_n)$.
Then $(\gamma_n)$ is a non-decreasing sequence, converging to the covolume of $H$ in $O$ in the standard normalization (see \ref{sec:SE}).  In particular, $H$ is of finite covolume if and only if $(\gamma_n)$ is bounded.
Moreover, $H$ is cocompact if and only if $(\gamma_n)$ is asymptotically constant.
\end{prop}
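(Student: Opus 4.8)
The plan is to work with the increasing chain of compact open subgroups $O_n$ and the covolume quantities $\gamma_n = \vol_O(O_n)/\vol_H(H_n)$. First I would establish monotonicity: since $O_n \leq O_{n+1}$ are compact open subgroups and $H_n = H \cap O_n$, I would apply Lemma~\ref{lem:serre} (Serre's covolume formula) to the pair $(O_n, O_{n+1})$ acting on $O/H$ with base point $x$. Equation~(\ref{eq:comme}) relates $\vol_O(O_n)$, $\vol_H(O_n \cap H) = \vol_H(H_n)$ and the measure of the orbit $\mu(O_n x)$ to the corresponding quantities for $O_{n+1}$. Because $O_n x \subseteq O_{n+1} x$, the orbit measures satisfy $\mu(O_n x) \leq \mu(O_{n+1} x)$, and rearranging the covolume formula shows $\gamma_n \leq \gamma_{n+1}$. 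Concretely, $\gamma_n$ should be proportional to $\mu(O_n x)$, which is plainly non-decreasing in $n$.

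Next I would identify the limit of $(\gamma_n)$ with the covolume of $H$ in $O$. The key observation is that $(O_n)$ is a compact exhaustion of $O$, so the orbits $O_n x$ exhaust $O/H$ and, by continuity of measure from below, $\mu(O_n x) \to \mu(O/H)$. Using Proposition~\ref{prop:vol} (or directly Lemma~\ref{lem:haarnorm} with the standard normalization), the quantity $\gamma_n$ is exactly the $\mu$-measure $\mu(O_n x)$ in the normalization where $\vol_O$, $\vol_H$ and $m$ are standardly compatible; hence $\gamma_n \uparrow \mu(O/H) = \covol(H)$. The statement that $H$ has finite covolume precisely when $(\gamma_n)$ is bounded then follows immediately, since a non-decreasing sequence is bounded if and only if it has a finite limit, and that finite limit is the total measure of $O/H$.

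Finally, for the cocompactness criterion, I would argue that $H$ is cocompact in $O$ if and only if $O/H$ is compact, which (since the $O_n x$ form an open cover of $O/H$ by relatively compact sets, each being a continuous image of the compact $O_n$) happens if and only if $O/H = O_N x \cdot (\text{something})$ stabilizes at finite stage; more precisely $O/H$ is compact iff it equals a finite union from the exhaustion, i.e. $O = O_N H$ for some $N$. When this holds, $\mu(O_N x) = \mu(O/H)$, so $\gamma_n = \gamma_N$ for all $n \geq N$, giving asymptotic constancy. Conversely, if $(\gamma_n)$ is asymptotically constant, say $\gamma_n = \gamma_N$ for $n \geq N$, then $\mu(O_n x) = \mu(O_N x)$ for all $n \geq N$; since $O_N x \subseteq O_n x$ and the measure $m$ has full support on $O/H$, this forces $O_n x = O_N x$ up to measure zero, and by openness of $O_n x$ one deduces $O_N x = O/H$, whence $O = O_N H$ and $O/H$ is the continuous image of the compact set $O_N$, hence compact.

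I expect the main obstacle to be the converse direction of the cocompactness criterion: passing from equality of measures $\mu(O_n x) = \mu(O_N x)$ to the genuinely topological conclusion that the orbits coincide and cover $O/H$. This requires carefully exploiting that $m$ has full support (so a relatively open set of zero complementary measure must be everything) together with the fact that each $O_n x$ is open in $O/H$ (being the image of the open subgroup $O_n$ under the open quotient map $O \to O/H$). One must also take care that $H$ is merely closed, not discrete, so the fibers are compact subgroups $H_n$ and the measures $\vol_H(H_n)$ are finite and positive, keeping the ratios $\gamma_n$ well-defined throughout.
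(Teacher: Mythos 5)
Your overall line of argument is the right one --- indeed the paper's own proof consists of the single word ``straightforward'' plus one remark --- and your key steps are correct: the identification $\gamma_n = \mu(O_n x)$ under the standard normalization, monotone convergence of $\mu(O_n x)$ to $\mu(O/H)$ along the exhaustion, and the two-way cocompactness argument via openness of the orbits $O_n x$ and full support of $\mu$. However, there is one genuine gap, and it is precisely the point that the paper's terse proof does record: you assume from the outset that $O/H$ carries an $O$-invariant Radon measure $\mu$ compatible with $\vol_O$ and $\vol_H$. You need this even to invoke Lemma~\ref{lem:serre} (which requires a measure-preserving action on a measured space) or the Weil formula behind Lemma~\ref{lem:haarnorm}, and to speak of the standard normalization at all. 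For a general closed subgroup of a locally compact group such a measure need not exist --- e.g.\ for a parabolic subgroup of $\SL_2(\RR)$, or the subgroup $\RR_{>0}$ of the $ax+b$ group --- since by Weil's criterion $G/H$ admits a $G$-invariant measure if and only if the modular function of $G$ restricts on $H$ to the modular function of $H$. Here this must be checked, and it holds for the following reason: $O = \bigcup_n O_n$ is a union of compact subgroups, so its modular character has image a union of compact subgroups of $\RR_{>0}$, hence is trivial; likewise $H = \bigcup_n H_n$ is a union of compact subgroups, so $H$ is unimodular too. This is exactly the observation ``$O$ and all of its closed subgroups are unimodular'' with which the paper's proof begins; without it your computations have no measure $\mu$ to refer to.

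Two smaller points to patch when writing this up. First, Lemma~\ref{lem:haarnorm} is stated in the paper only for \emph{cofinite} $H$, whereas cofiniteness is what you are trying to characterize; so either re-derive the identity $\vol_O(O_n) = \vol_H(H_n)\,\mu(O_n x)$ directly from the Weil integration formula (the same one-line computation, valid whenever the invariant measure exists), or run everything through Lemma~\ref{lem:serre}, which is stated for general measure-preserving actions. Second, in the converse cocompactness direction you need $O_n x \setminus O_N x$ to be \emph{open} (so that full support forces it to be empty, not merely null); for this, note that $O_N x$ is in fact clopen in $O/H$, because $O_N H$ is a union of right cosets of the open subgroup $O_N$ and so is its complement. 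With these repairs your proof is complete and is the natural fleshing-out of the argument the authors had in mind.
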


\begin{proof}
Straightforward.
Observe that $O$ and all of its closed subgroups are unimodular. In particular,  if $H$ is cocompact then it is of finite covolume.
\end{proof}

\begin{prop}\label{prop:UniformBound}
Let $O$ be a locally compact group which is the union of a countable ascending chain of compact open subgroups $O_1 \leq O_2 \leq O_3 \leq \dots$. If $\sup_n [O_{n+1} : O_n] < \infty$, then $O$ has property $(M)$.
\end{prop}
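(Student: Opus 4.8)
The plan is to reduce everything to Proposition~\ref{prop:criterion}. Let $H < O$ be a cofinite subgroup; since $H$ is by definition closed and of finite covolume, the setup of that proposition applies. Set $H_n = H \cap O_n$ (a compact open subgroup of $H$, being closed in the compact $O_n$ and open since $O_n$ is open in $O$) and $\gamma_n = \vol_O(O_n)/\vol_H(H_n)$. Proposition~\ref{prop:criterion} tells us that $(\gamma_n)$ is non-decreasing, converges to the covolume of $H$, that cofiniteness of $H$ amounts to boundedness of $(\gamma_n)$, and that cocompactness amounts to $(\gamma_n)$ being asymptotically (i.e.\ eventually) constant. So the entire task reduces to the following: assuming $(\gamma_n)$ is bounded and $M := \sup_n [O_{n+1}:O_n] < \infty$, show that $(\gamma_n)$ is eventually constant.

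First I would compute the consecutive ratio $\gamma_{n+1}/\gamma_n = [O_{n+1}:O_n]/[H_{n+1}:H_n]$, using that for compact open subgroups the Haar-measure ratio equals the index. Then I would establish the key inequality $[H_{n+1}:H_n] \le [O_{n+1}:O_n]$ by letting $H_{n+1}$ act by left translation on the finite coset space $O_{n+1}/O_n$: the stabilizer of the base coset is exactly $H_{n+1} \cap O_n = H_n$, so the orbit, of size $[H_{n+1}:H_n]$, injects into $O_{n+1}/O_n$. This simultaneously recovers the monotonicity of $(\gamma_n)$ and bounds both indices by $M$.

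The crux is a quantization (gap) argument, which I expect to be the only real content. Writing $a_n = [O_{n+1}:O_n]$ and $b_n = [H_{n+1}:H_n]$, these are positive integers with $1 \le b_n \le a_n \le M$. Whenever $a_n/b_n > 1$ we have $a_n \ge b_n + 1$, hence $a_n/b_n \ge 1 + 1/b_n \ge 1 + 1/M$. Thus every factor in the telescoping product $\gamma_n = \gamma_1 \prod_{k < n}(a_k/b_k)$ is either equal to $1$ or at least $1 + 1/M$, with nothing in between. Consequently, if infinitely many indices $k$ satisfied $a_k/b_k > 1$, then $\gamma_n$ would eventually exceed $(1 + 1/M)^N$ for every $N$, making $(\gamma_n)$ unbounded and contradicting the cofiniteness of $H$. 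Hence $a_k = b_k$ for all but finitely many $k$, so $(\gamma_n)$ is eventually constant and $H$ is cocompact by Proposition~\ref{prop:criterion}. The reduction to Proposition~\ref{prop:criterion} and the index inequality are routine; the uniform bound $M$ enters only to produce the multiplicative gap $1 + 1/M$ that forces stabilization.
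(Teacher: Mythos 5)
Your proof is correct and follows essentially the same route as the paper: reduce to Proposition~\ref{prop:criterion}, note that $\gamma_{n+1}/\gamma_n = [O_{n+1}:O_n]/[H_{n+1}:H_n]$ is a ratio of integers bounded by the uniform constant, and run a quantization argument showing that boundedness of $(\gamma_n)$ forces all but finitely many of these ratios to equal $1$. The only cosmetic differences are that the paper obtains the ratio identity by citing Lemma~\ref{lem:serre} and phrases the gap additively (a nonzero jump satisfies $\gamma_{n+1}-\gamma_n \geq \gamma_1/q$), whereas you obtain it directly from index-multiplicativity of Haar measure and phrase the gap multiplicatively (a nontrivial factor is at least $1+1/M$); the two formulations are equivalent.
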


\begin{proof}
Let $H<O$ be a closed subgroup of finite covolume. We let $\vol_O$ and $\vol_H$ denote Haar measures on $O$ and $H$ respectively, and   $\mu$ be an $O$-invariant measure on $O/H$ such that $\vol_O$,  $\vol_H$ and $\mu$ satisfy the standard normalization (see Section~\ref{sec:SE}).  Define  $H_n=H\cap O_n$ for all $n$, so that $H_n$ is a compact open subgroup of $H$ contained in $H_{n+1}$. Set 
$$
 \gamma_n=\mu(O_nH/H) = \vol_O(O_n)/\vol_H(H_n),
$$
where the second equality follows from the standard normalization of the measures. 
Set also
 $$ \alpha_n=[ H_{n+1} : H_n]~\text{and}~\beta_n=[ O_{n+1} : O_n]
$$
for all $n$ and note that $\alpha_n, \beta_n \in \NN$.
By Lemma~\ref{lem:serre}, we have
$$\vol_O(O_n) \vol_H(H_m) \mu(O_mH/H) = \vol_O(O_m) \vol_H(H_n) \mu(O_nH/H)$$
for all $m, n$, so that, putting $m = n+1$, we get
$$
\alpha_n \gamma_{n+1}=\beta_n\gamma_n
$$
for all $n$. 
In particular $\alpha_n\leq \beta_n$ since $\gamma_{n+1} \geq \gamma_n$.
By assumption we have an upper bound $q$ such that for all $n$, $\alpha_n\leq \beta_n\leq q$.

Denote $C=\gamma_1/q$.
We claim that if $\gamma_n<\gamma_{n+1}$ then $\gamma_{n+1}-\gamma_{n}\geq C$.
Indeed, Assume $\gamma_n<\gamma_{n+1}$.
Then $\beta_n>\alpha_n$.
Since these are integers, we get  $\beta_n\geq \alpha_n+1$ and conclude
\[\gamma_{n+1}-\gamma_n=\gamma_n(\frac{\beta_n}{\alpha_n}-1) \geq \gamma_n(\frac{\alpha_n+1}{\alpha_n}-1)
=\frac{\gamma_n}{\alpha_n}\geq \frac{\gamma_1}{q} =C. \]
Since $H$ is of finite covolume, by Proposition~\ref{prop:criterion}, $(\gamma_n)$ converges, so $(\gamma_{n+1}-\gamma_n)$ tends to 0.
It follows that $(\gamma_n)$ stabilizes. Again, by Proposition~\ref{prop:criterion}, we conclude that $H$ is cocompact.
\end{proof}

We remark that an extension of a group satisfying the hypotheses of Proposition~\ref{prop:UniformBound} by a compact group need not have  property (M): indeed, if one chooses all the finite fields in Theorem~\ref{theo:example} to have the same characteristic~$p$, then the group $\Lambda$ from that example is a discrete elementary abelian $p$ group, which therefore satisfies the condition of Proposition~\ref{prop:UniformBound}. However, Theorem~\ref{theo:example} provides an extension of $\Lambda$ by a compact group which fails to  have  property (M).

That example also shows that, although the class of locally elliptic groups is stable under group extensions, the class of locally elliptic groups with the bounded index property is not preserved by group extensions. However, we do have the following result for locally $p$-elliptic groups.

\begin{lem}\label{lem:Locally-p-elliptic->bddIndex}
Let $p$ be a prime. Every $\sigma$-compact locally $p$-elliptic \tdlc group has the bounded index property. In particular it has property (M).
\end{lem}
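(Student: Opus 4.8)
The plan is to verify the hypotheses of Proposition~\ref{prop:UniformBound}, whose conclusion is exactly property (M); once the bounded index property is in hand, property (M) comes for free. The bounded index property asks that $O$ be exhausted by a countable ascending chain of compact open subgroups with uniformly bounded consecutive indices, so the entire task is to manufacture such a chain.

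First I would record that any $\sigma$-compact locally elliptic \tdlc group is the union of a countable ascending chain of compact open subgroups. Writing $O=\bigcup_n K_n$ with each $K_n$ compact, I fix a compact open subgroup $V$ (which exists since $O$ is \tdlc) and build the chain inductively: let $O_1$ be a compact subgroup containing $K_1\cup V$, and $O_{n+1}$ a compact subgroup containing $O_n\cup K_{n+1}$. Such compact subgroups exist because local ellipticity guarantees that every compact subset of $O$ lies in a compact subgroup (see \S\ref{sec:LE}). Each $O_n$ contains $V$ and is therefore open, the chain is ascending by construction, and $\bigcup_n O_n\supseteq\bigcup_n K_n=O$.

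The key point is to control the consecutive indices, and this is where local $p$-ellipticity enters. By Proposition~\ref{prop:Locally-p-elliptic}(i), each compact subgroup $O_n$ is contained in a pro-$p$ subgroup of $O$, and hence is itself pro-$p$. Thus $[O_{n+1}:O_n]$ is a finite power of $p$; but a priori this power can grow without bound, so the chain $(O_n)$ need not itself have bounded consecutive indices. I would remedy this by refining the chain: for the pro-$p$ group $P=O_{n+1}$ and its open subgroup $Q=O_n$, there is a finite chain $Q=V_0\leq V_1\leq\dots\leq V_k=P$ with $[V_{i+1}:V_i]=p$ for every $i$. This reduces to the corresponding statement for finite $p$-groups by passing to the quotient by the open normal core of $Q$ in $P$, where an index-$p$ refinement exists because a proper subgroup of a finite $p$-group is properly contained in its normalizer. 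Each intermediate $V_i$ is open (it contains the open subgroup $O_n$) and compact (it is contained in $O_{n+1}$).

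Concatenating these refinements over all $n$ produces a countable ascending chain of compact open subgroups exhausting $O$ in which every consecutive index equals $p$. Hence $O$ has the bounded index property with bound $p$, and Proposition~\ref{prop:UniformBound} delivers property (M). The only genuinely non-routine ingredient is the refinement into index-$p$ steps, and this is exactly where the pro-$p$ (as opposed to merely pro-finite) hypothesis is indispensable: for a general locally elliptic group the indices $[O_{n+1}:O_n]$ may involve arbitrarily many distinct primes and admit no such uniform refinement, consistently with the observation that the bounded index property is not stable under compact extensions.
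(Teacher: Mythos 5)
Your proof is correct and follows essentially the same route as the paper's: exhaust $O$ by a countable ascending chain of compact open (necessarily pro-$p$) subgroups, refine the chain so that consecutive indices equal $p$, and invoke Proposition~\ref{prop:UniformBound}. The paper states these steps tersely, while you supply the details (the inductive construction of the chain via local ellipticity and the index-$p$ refinement via the normalizer property of finite $p$-groups); these details are accurate, so there is nothing to correct.
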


\begin{proof}
Let $O$ be a $\sigma$-compact locally $p$-elliptic \tdlc group.  Let $O_1 \leq O$ be a compact open subgroup. Thus $O_1$ is of countable index since $O$ is $\sigma$-compact.  Proposition~\ref{prop:Locally-p-elliptic} implies that the existence of a chain $O_1 \subset O_2 \subset \dots$ consisting of open pro-$p$ subgroups whose union is the whole group $O$. Since the $O_n$ are all pro-$p$, the chain $O_1 \subset O_2 \subset \dots$ may be refined in such a way that  $[O_{n+1} : O_n] = p$ for all $n$. Thus $O$ has the bounded index property, and we conclude using Proposition~\ref{prop:UniformBound}.
\end{proof}

We next provide another set of conditions under which the hypotheses of Proposition~\ref{prop:UniformBound} are satisfied.

\begin{lem}\label{lem:BddIndex}
Let $G$ be a \tdlc group. Assume that $G$ has a closed normal subgroup $A$ and a compact open subgroup $U$ in $G$ satisfying the following conditions:
\begin{enumerate}[(a)]
\item  $G/A$ is compactly  generated and has a compact  open normal subgroup.

\item $U \cap A$ is   normal in $A$.

\end{enumerate}
Let $B$ be the normal closure of $U \cap A$ in $G$. Then $O= BU$ has property (M). In particular, if $H$ is a closed cofinite subgroup of $G$, then $H \cap O$ is cocompact in $O$ and $H$ is cocompact in $\overline{HB}$.
\end{lem}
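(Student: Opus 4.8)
The plan is to produce an exhausting chain of compact open subgroups of $O$ with \emph{uniformly bounded} index and then invoke Proposition~\ref{prop:UniformBound}; the ``in particular'' statements will follow from Lemma~\ref{lem:HUGA}. First I would set $V=U\cap A$, a compact open subgroup of $A$ which is normal in $A$ by hypothesis~(b). Using~(a), choose a compact open normal subgroup of $G/A$ and let $K\trianglelefteq G$ be its preimage, so that $K$ is open, $A\le K$, the quotient $K/A$ is compact, and $Q:=G/K$ is a finitely generated discrete group. Let $B_0$ be the normal closure of $V$ in $K$. Both $A$ and $U\cap K$ normalize $V$, so $N_K(V)$ is open and contains $A$; as $K/A$ is compact, $N_K(V)$ has finite index in $K$ and $V$ has only finitely many $K$-conjugates. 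Hence $B_0$ is a finite product of compact open subgroups of $A$ that are normal in $K$, so $B_0$ is a compact open subgroup of $A$, normal in $K$, with $V\le B_0$. Because $K\trianglelefteq G$ and $U$ normalizes $V$, we get $uB_0u^{-1}=B_0$ for every $u\in U$; thus $U$ normalizes $B_0$, so $B_0\trianglelefteq O$ and $N_G(B_0)\supseteq KU\supseteq K$.

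Next I would analyze the conjugates $\{gB_0g^{-1}:g\in G\}$, which generate $B$ and are indexed by $G/N_G(B_0)$, a quotient of $Q$ since $K\le N_G(B_0)$. Two points are crucial. First, because $N_G(B_0)$ contains the \emph{normal} subgroup $K$, the $U$-stabilizer of any conjugate contains $U\cap K$, so every $U$-orbit of conjugates has size at most $c:=[U:U\cap K]<\infty$. Second, fixing a finite symmetric generating set $\bar s_1,\dots,\bar s_d$ of $Q$ with lifts $s_i\in G$, any conjugate $C=gB_0g^{-1}$ whose class has word-length $\ell\ge 1$ can be written $C=g'(s_iB_0s_i^{-1})g'^{-1}$ with $g'$ of word-length $\ell-1$; so $C$ has a ``parent'' $C'=g'B_0g'^{-1}$ with $[C:C\cap C']=[s_iB_0s_i^{-1}:s_iB_0s_i^{-1}\cap B_0]\le m$, where $m:=\max_i[s_iB_0s_i^{-1}:s_iB_0s_i^{-1}\cap B_0]<\infty$ (a finite maximum over the finitely many generators, each index finite by commensurability of compact open subgroups of $A$).

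Now enumerate the $U$-orbits of conjugates $\mathcal O_1,\mathcal O_2,\dots$ so that the orbit of each parent precedes the orbit of its children (possible by ordering by word-length), and let $B_n$ be the product of all conjugates lying in $\mathcal O_1\cup\dots\cup\mathcal O_n$. Each $B_n$ is a $U$-invariant compact open subgroup of $A$ containing $V$, with $\bigcup_n B_n=B$, so $O_n:=B_nU$ is a compact open subgroup of $O$, the $O_n$ ascend with union $O$, and $B_n\cap U=V$ yields $[O_{n+1}:O_n]=[B_{n+1}:B_n]$. Passing from $B_n$ to $B_{n+1}$ adjoins a single $U$-orbit of at most $c$ conjugates, each with its parent already inside $B_n$; since these conjugates are normal in $K$, the partial products are subgroups and each contributes index at most $m$, giving $[B_{n+1}:B_n]\le m^{c}$. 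Thus $(O_n)$ has uniformly bounded index and Proposition~\ref{prop:UniformBound} shows $O$ has property~(M). The main obstacle is precisely this simultaneous control: $U$-invariance is needed for $O_n$ to be a subgroup, yet a single $U$-orbit or a whole word-length sphere can enlarge the index arbitrarily. The point of passing to $B_0$ is that $N_G(B_0)$ contains the \emph{normal} subgroup $K$, which forces $U$-orbits to be uniformly finite, while finite generation of $Q$ bounds the parent-index $m$; adding one $U$-orbit at a time, parent before child, is what keeps each step bounded.

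Finally, since $O$ is open in $G$, if $H\le G$ is closed of finite covolume then its trace $H\cap O$ is of finite covolume in $O$ (the open $O$-orbit $O/(O\cap H)\subseteq G/H$ carries the restricted invariant measure), whence $H\cap O$ is cocompact in $O$ by property~(M). To deduce that $H$ is cocompact in $\overline{HB}$, apply Lemma~\ref{lem:HUGA} inside $\overline{HB}$ with the normal subgroup $B$ and the open subgroup $U\cap\overline{HB}$: as $HB$ is dense, $H$ is cocompact in $\overline{HB}$ iff $(U\cap\overline{HB})B/\big((U\cap\overline{HB})B\cap H\big)$ is compact. Since $B\le\overline{HB}$ one checks $(U\cap\overline{HB})B=O\cap\overline{HB}$ and $(U\cap\overline{HB})B\cap H=O\cap H$, so this quotient equals $(O\cap\overline{HB})/(O\cap H)$, a closed subspace of the compact space $O/(O\cap H)$, hence compact.
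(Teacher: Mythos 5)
Your proof is correct and is, in essence, the paper's own argument: both pass to the open normal subgroup $K$ (the paper calls it $P$), thicken $V=U\cap A$ to the product $B_0$ (the paper's $A_0$) of its finitely many $K$-conjugates, adjoin $G$-conjugates of $B_0$ in word-length order with respect to a finite generating set of the discrete quotient $G/K$ so that each step of the resulting exhausting chain has uniformly bounded index, and then conclude via Proposition~\ref{prop:UniformBound} and Lemma~\ref{lem:HUGA}. The only substantive difference is organizational: the paper adjoins one conjugate $g_nA_0g_n^{-1}$ at a time, with $g_n$ running through a lifted finitely generated subgroup, builds its chain $A_n(P\cap U)$ inside the subgroup $B(P\cap U)$, and only at the end transfers property (M) to $O=BU$ across that finite-index inclusion; you instead insist on $U$-invariant stages $B_nU$ inside $O$ itself, which forces you to adjoin a whole $U$-orbit at a time and to prove the extra (correct) observation that orbit sizes are at most $c=[U:U\cap K]$ because $N_G(B_0)$ contains the \emph{normal} subgroup $K$ --- your route avoids the finite-index transfer at the cost of orbit bookkeeping, and the paper's does the opposite. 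One sentence of yours needs repair: for a non-minimal member $uC_0u^{-1}$ of a newly adjoined orbit, its \emph{own} word-length parent need not lie in $B_n$, since word length is not constant on $U$-orbits and that parent's orbit may come later in your enumeration; what does lie in $B_n$ is $uC_0'u^{-1}$, where $C_0'$ is the parent of a minimal-length member $C_0$ of the orbit (it lies in the same, earlier, $U$-orbit as $C_0'$, and $B_n$ is $U$-invariant), and since $[uC_0u^{-1}:uC_0u^{-1}\cap uC_0'u^{-1}]=[C_0:C_0\cap C_0']\le m$, your bound $[B_{n+1}:B_n]\le m^{c}$ survives intact. (A trivial slip besides: the individual $K$-conjugates of $V$ are normal in $A$, not in $K$; it is their product $B_0$ that is normal in $K$, which is what makes the partial products subgroups.)
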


\begin{proof}
Since $G/A$ has a compact open normal subgroup, it follows that $G$ has an open normal subgroup, say $P$, containing $A$ as a cocompact subgroup.

By (a) the quotient  $G/P$ is finitely generated. We may thus find  a finite set of elements $S \subset G$ which, together with $P $, generates $G$.
Let $\Gamma$ be the subgroup of $G$ generated by $S$, let $\ell$ be the word length of $\Gamma$ with respect to the finite generating set $S$ and let $1= g_0 , g_1, g_2, \dots$ be an enumeration of the elements of $\Gamma$ such that $\ell(g_i) \leq \ell(g_{i+1})$ for all $i$.

The group  $V= U \cap A$ is normal in $U$ since $A$ is normal in $G$; it follows that the normalizer $\norma_P(V)$ is open in $P$. On the other hand, the normalizer $\norma_P(V)$ contains $A$ by (b), and is thus cocompact in $P$. Therefore $\norma_P(V)$ is of finite index in $P$. In other words, the $P$-conjugacy class of $V$ is finite.

Let $V=V_1, V_2, \dots, V_n$ be all the $P$-conjugates of $V$ and set $A_0 = V_1 \cdot V_2 \dots V_n$. Thus $A_0$   is a compact normal subgroup of $P$ which contains $V$ and  is thus open in $A$.

For each $n>0$, let $A_n \leq A$ be the subgroup of $A$ generated by $A_0 \cup g_1 A_0 g_1\inv \cup \dots \cup g_n A_0 g_n\inv$. Since $P$ is normal in $G$, it follows that for each $n \geq 0$, the group  $g_n A_0 g_n\inv$ is a compact normal subgroup of $P$ contained in $A$. In particular,   we have $A_n = A_{n-1} \cdot g_n A_0 g_n\inv$ for all $n>0$. Given $n>0$, there is some $m\leq n-1$ and $s \in S$ such that $g_n = g_m s$. We deduce successively
$$
\begin{array}{rcl}
[A_n : A_{n-1}]  & = &  [g_n A_0 g_n\inv : A_{n-1} \cap  g_n A_0 g_n\inv] \\
& \leq & [g_n A_0 g_n\inv : g_{m} A_0 g_{m}\inv \cap g_n A_0 g_n\inv] \\
& = & [s A_0 s\inv : A_0 \cap s A_0 s \inv ] \\
& \leq & \sup_{t \in S} [t A_0 t\inv : A_0 \cap t A_0 t \inv ]
\end{array}
$$

Since $A_n$ is normal in $P$ for all $n$, it follows that     $\bigcup_{n\geq 0} A_n$ is normal in $G$ and open in $A$, hence closed. Therefore, the group $ \bigcup_{n\geq 0} A_n$ coincides with the normal closure of $V$ in $G$, which is denoted by $B$.

We now set $Q_n = A_n \cdot (P \cap U)$ for each $n\geq 0$  so that the group  $Q = \bigcup_{n\geq 0} Q_n$ coincides with $A \cdot (P \cap U)$. Moreover, for all $n>0$, we have $Q_n = A_n \cdot (P \cap U)$, so that  
$$
[Q_n : Q_{n-1}] \leq  [A_n : A_{n-1}].
$$
We have shown above that the latter is bounded above independently of $n$ by $ \sup_{t \in S} [t A_0 t\inv : A_0 \cap t A_0 t \inv ]$. It follows from Proposition~\ref{prop:UniformBound} that $Q$ has property (M). Since $P \cap U$ is of finite index in $U$, it follows that $Q = B(P\cap U)$ is of finite index in $O = BU$, so that $O$ has property (M) as well. 

Given a cofinite subgroup $H \leq G$, we deduce that $H \cap O$ is cocompact in $O$, and this in turn implies that $H$ is cocompact in $\overline{HB}$ by Lemma~\ref{lem:HUGA}.
\end{proof}

\subsection{Metabelian-by-compact groups}

The examples of groups without property (M) provided by Theorem~\ref{theo:example} are metabelian, but not compactly generated. The following result shows that for metabelian groups, compact generation implies property (M).

\begin{thm}\label{thm:metabelian}
A compactly generated metabelian-by-compact group has property (M).
\end{thm}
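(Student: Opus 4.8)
The plan is to reduce to the totally disconnected case and then exploit the structural results on locally $p$-elliptic groups and the bounded-index machinery established above. First I would invoke Theorem~\ref{theo:reduction}: since solvability (hence metabelianity) and compact generation are both preserved under taking closures of continuous images, the closure of the image of the metabelian cocompact normal subgroup $N$ in $G/G^\circ$ remains metabelian-by-compact and cocompact. Because a closed cofinite subgroup is automatically cocompact in $\overline{G^\circ H}$, it suffices to treat the case where $G$ is totally disconnected. So I would assume henceforth that $G$ is a compactly generated \tdlc group with a closed cocompact metabelian normal subgroup; up to passing to that normal subgroup and back (using Lemma~\ref{lem:HUGA} and the reduction to open finite-index subgroups), the real content is a compactly generated metabelian \tdlc group $G$ with derived subgroup $A = \overline{[G,G]}$ abelian and $G/A$ abelian.

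The heart of the argument should be an application of Lemma~\ref{lem:BddIndex}. I would take $A = \overline{[G,G]}$, which is a closed normal abelian subgroup with $G/A$ compactly generated abelian; by Proposition~\ref{prop:nil-Willis} (abelian is nilpotent) $G/A$ is compact-by-discrete, so after dividing by a compact normal subgroup I may assume $G/A$ has a compact open normal subgroup, verifying hypothesis (a). For hypothesis (b) I would choose a compact open subgroup $U$ of $G$ such that $U \cap A$ is normalized by $A$; this is where abelianness of $A$ helps, since $U \cap A$ is automatically normal in the abelian group $A$. Then Lemma~\ref{lem:BddIndex} yields that $O = BU$ has property (M), where $B$ is the normal closure of $U \cap A$ in $G$, and it gives directly that any cofinite $H \leq G$ is cocompact in $\overline{HB}$. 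What remains is to promote cocompactness in $\overline{HB}$ to cocompactness in $G$, equivalently to handle the quotient $G/B$.

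The main obstacle I anticipate is precisely this final step: showing that $H$ is cocompact in $G$ once it is cocompact in $\overline{HB}$. The quotient $G/B$ is again metabelian (a quotient of a metabelian group), compactly generated, and totally disconnected, and the image of $A$ in $G/B$ is now a \emph{torsion-free}-like abelian group in which the compact open subgroup $U \cap A$ has been collapsed — so the image of $A$ should carry no nontrivial compact open subgroup, pushing $G/B$ toward being Noetherian or directly toward the \{compact-by-discrete\}-by-compact situation of Corollary~\ref{cor:compact-by-discrete}. The natural strategy is to set up an induction: having killed the ``compact part'' of $A$ via $B$, the image $\bar A$ of $A$ in $G/B$ becomes a discrete (or compact-by-discrete) finitely generated abelian group, so $G/B$ is \{compact-by-discrete\}-by-compact and has property (M) by Corollary~\ref{cor:compact-by-discrete}. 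Combining property (M) for $G/B$ (applied to the image of $H$) with cocompactness of $H$ in $\overline{HB}$, and using Lemma~\ref{lem:HUGA} once more with $A = B$, $U$ open, and $HB$ dense in $\overline{HB}$, should close the argument. Verifying that $\bar A$ is genuinely compact-by-discrete — i.e. that collapsing $U\cap A$ really exhausts the compact open part of $A$ uniformly across conjugates — is the delicate point, and I expect it to rely on the compact generation of $A$ (via Proposition~\ref{prop:CompactGen}) together with the finiteness of the $P$-conjugacy class of $U \cap A$ established inside the proof of Lemma~\ref{lem:BddIndex}.
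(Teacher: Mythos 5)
Your overall skeleton (reduce to \tdlc via Theorem~\ref{theo:reduction}, apply Lemma~\ref{lem:BddIndex} to an abelian normal subgroup, then handle the quotient by the normal closure $B$) is the same as the paper's, but two of its joints are broken. The first and most serious gap is your opening reduction from metabelian-by-compact to metabelian. There is no soft passage from $G$ to its cocompact metabelian normal subgroup: property (M) does not ascend from a cocompact closed normal subgroup to the ambient group, which is precisely the phenomenon of Theorem~\ref{theo:example} (the discrete abelian group $\Lambda$ has (M) and is cocompact normal in a metabelian $G$ without (M)), a point the paper stresses right after Lemma~\ref{lem:CompactKernel}. Lemma~\ref{lem:HUGA} cannot supply the reduction either: with $A=M$ it only trades $G$ for an open subgroup of the form $UM$, which is still metabelian-by-(profinite), so the compact top never disappears. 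Nor is this cosmetic for your argument: your key choice $A=\overline{[G,G]}$ is abelian only when $G$ itself is metabelian; if the compact quotient $G/M$ is non-abelian, then $\overline{[G,G]}$ has non-abelian image in it, and hypothesis (b) of Lemma~\ref{lem:BddIndex} fails for this $A$. The paper never makes such a reduction: writing $N\le M\le G$ with $N$, $M/N$ abelian and $G/M$ compact, it applies Lemma~\ref{lem:BddIndex} inside the full group $G$, taking for the abelian normal subgroup the normal closure $A$ of a compact relatively open subgroup $V\le N$ (so $A\le N$ is automatically abelian and relatively open in $N$), and verifies hypothesis (a) by splitting $M/N$ as (compact open)$\times$(finitely generated discrete torsion-free) and then applying Lemma~\ref{lem:DiscreteNormal} to $M/A$ and Lemma~\ref{lem:compact-by-discrete-by-compact} to $G/A$; compact generation of $G$ enters exactly through those two lemmas. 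The repair is to run this argument with $A=N$ (equivalently, with the normal closure of $U\cap N$), not with $\overline{[G,G]}$.

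The concluding step also has a hole, and your diagnosis of where the delicacy lies is misplaced. Discreteness of $\bar A=A/B$ is automatic: $B$ contains $U\cap A$, which is open in $A$, so $A/B$ is discrete --- and Proposition~\ref{prop:CompactGen} is unavailable anyway, since $A=\overline{[G,G]}$ is not cocompact in $G$. The genuine problem is that a discrete normal subgroup which is \emph{not cocompact} does not make $G/B$ a \{compact-by-discrete\}-by-compact group, so Corollary~\ref{cor:compact-by-discrete} does not apply as you state it: $(G/B)/\bar A\cong G/A$ is a compactly generated abelian \tdlc group which in general contains a copy of $\ZZ^n$, so $\bar A$ is far from cocompact. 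What closes the gap is the same splitting as above: write $G/A\cong K\times D$ with $K$ compact open and $D$ discrete finitely generated torsion-free; the preimage of $D$ in $G/B$ is then a \emph{discrete cocompact} normal subgroup, so $G/B$ is compactly generated discrete-by-compact and has property (M) by Proposition~\ref{prop:discrete}(ii). Combined with the cocompactness of $H$ in $\overline{HB}$ furnished by Lemma~\ref{lem:BddIndex}, this would finish the metabelian case and is essentially the paper's argument; but as written, both your initial reduction and your final step are genuine gaps.
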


\begin{proof}
Let $G$ be a compactly generated locally compact group with a closed  normal subgroups $N \leq M$ such that $N$ is abelian, $M/N$ is abelian and $G/M$ is compact. We must prove that $G$ has property (M).

We first observe that $G$ is amenable and that the group $G/G^\circ$ is also metabelian-by-compact. By Theorem~\ref{theo:reduction} we may assume that $G$ is totally disconnected.

Let $V$ be a compact relatively open subgroup of $N$, and  $A$ denote the (abstract) normal closure of $V$ in $G$. Since $V$ is relatively open in $N$, so is $A$; in particular $A$ is closed in $G$. Moreover $V$ is normal in $A$ since $A$ is abelian. The quotient $G/A$ is compactly generated. We claim that it possesses a compact open normal subgroup.

In order to establish the claim, observe that $M/N$ is a compactly generated abelian totally disconnected group. Dividing out a compact open subgroup, we obtain a finitely generated abelian group, whose torsion subgroup is thus finite. Therefore $M/N$ has a unique maximal compact open subgroup, and the corresponding quotient is torsion-free. It follows that $M/N$  splits as a direct product of a compact open subgroup and a discrete torsion-free subgroup. Since $N/A$ is discrete, we deduce that the compactly generated group $M/A$ has a discrete cocompact normal subgroup. By Lemma~\ref{lem:DiscreteNormal}, this implies that $M/A$ has a compact open normal subgroup. Therefore $G/A$ is \{compact-by-discrete\}-by-compact. The claim then follows from Lemma~\ref{lem:compact-by-discrete-by-compact}.

Choose a compact open subgroup $U$ of $G$ such that $V = U \cap A$. The claim implies that all hypotheses of Lemma~\ref{lem:BddIndex} are satisfied.  Since $A$ is the normal closure of $V$, Lemma~\ref{lem:BddIndex} ensures that a cofinite subgroup $H$ of $G$ is cocompact in $\overline {HA}$. Since $G/A$ has a compact open normal subgroup, it also has property (M) by Lemma~\ref{lem:CompactKernel}, so that $\overline {HA}$ is cocompact in $G$. Hence $G/H$ is compact.
\end{proof}

\subsection{Nilpotent-by-nilpotent groups}

We are now ready to prove Theorem~\ref{theo:nil-by-nil}, which we reformulate as follows.

\begin{thm}\label{thm:nil-by-nil}
Compactly generated nilpotent-by-nilpotent locally compact groups $G$  have   property (M).
\end{thm}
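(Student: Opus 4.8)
The plan is to argue by induction on the nilpotency class $c$ of the nilpotent normal subgroup $N$, after the standard reduction to the totally disconnected case. Since $G$ is nilpotent-by-nilpotent it is solvable, hence amenable, and the class of nilpotent-by-nilpotent groups is preserved under passing to the closure of a continuous homomorphic image; so $G/G^\circ$ is again nilpotent-by-nilpotent, and by Theorem~\ref{theo:reduction} it suffices to treat the case where $G$ is totally disconnected. The base case $c=0$, where $N$ is trivial and $G$ itself is nilpotent, is Proposition~\ref{prop:nilpotent}.

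For the inductive step I would set $Z=\gamma_c(N)$, the last nontrivial term of the lower central series of $N$. Then $Z$ is central in $N$ (because $[Z,N]=\gamma_{c+1}(N)=1$), hence abelian, and it is characteristic in $N$ and therefore a closed abelian normal subgroup of $G$. The quotient $G/Z$ is compactly generated and nilpotent-by-nilpotent, with nilpotent normal subgroup $N/Z$ of class $c-1$, so by the induction hypothesis $G/Z$ has property (M). Now let $H\le G$ be cofinite. Since $H\subseteq\overline{HZ}$, pushing the invariant probability measure on $G/H$ forward along $G/H\to G/\overline{HZ}$ shows that $\overline{HZ}$ is cofinite in $G$; hence $\overline{HZ}/Z$ is cofinite in $G/Z$ and therefore cocompact by the induction hypothesis, i.e. $\overline{HZ}$ is cocompact in $G$. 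As a cocompact closed subgroup of the compactly generated group $G$, the group $G_1:=\overline{HZ}$ is itself compactly generated by Proposition~\ref{prop:CompactGen}, and $H$ remains cofinite in it. Replacing $G$ by $G_1$, the problem reduces to the following: $G_1$ is compactly generated and totally disconnected, $Z\trianglelefteq G_1$ is abelian, $H$ is cofinite with $HZ$ dense in $G_1$, and one must show that $H$ is cocompact in $G_1$.

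The decisive move is to control $Z\cap H$. Since $H$ normalizes both $H$ and $Z$, while $Z$ is abelian, the group $Z\cap H$ is normalized by $HZ$ and hence, by density and continuity, by all of $G_1$; thus $Z\cap H\trianglelefteq G_1$, and passing to $G_1/(Z\cap H)$ I may assume $Z\cap H=1$. In this situation it suffices to prove that $Z$ is compact: indeed $HZ=H\cdot Z$ is then closed (a closed set times a compact set), so $HZ=\overline{HZ}=G_1$, and the continuous bijection $z\mapsto zH$ from the compact group $Z$ to the Hausdorff space $G_1/H$ is a homeomorphism, exhibiting $G_1/H$ as compact. Equivalently, without the last normalization, one shows that $Z\cap H$ is cofinite, hence (as abelian groups have property (M)) cocompact, in $Z$.

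The main obstacle is exactly this last point: showing, under the hypotheses $Z\cap H=1$ and $HZ$ dense, that the abelian normal subgroup $Z$ of the compactly generated group $G_1$ is compact. This is where compact generation is indispensable; dropping it permits precisely the pathology of Theorem~\ref{theo:example}. I would attack it by splitting $Z$ along its LE-radical $\LF(Z)$ (see \S\ref{sec:LE}), which is characteristic in $Z$ and hence normal in $G_1$: the quotient $Z/\LF(Z)$ is discrete and torsion-free abelian, to be handled by a discreteness argument in the spirit of Lemma~\ref{lem:DiscreteNormal}, while the locally elliptic part $\LF(Z)$ is to be handled by the bounded-index property. Concretely, a finite set of elements generates $G_1$ together with a compact open subgroup, and conjugating a compact open subgroup of $\LF(Z)$ by these finitely many generators produces an exhaustion of the relevant piece by compact open subgroups of \emph{uniformly bounded} index, which is exactly the mechanism driving Lemma~\ref{lem:BddIndex} and Proposition~\ref{prop:UniformBound}. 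Combining these two ingredients should force $Z$ to be compact and complete the induction. The step I expect to require the most care is making the bounded-index estimate for $\LF(Z)$ interact correctly with the (possibly non-locally-elliptic) discrete quotient $Z/\LF(Z)$, generalizing the decomposition into a compact-open part and a discrete torsion-free part that underlies the proof of Theorem~\ref{thm:metabelian}.
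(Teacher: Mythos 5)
Your reductions are sound and essentially parallel the paper's own inductive step: passing modulo a subgroup that is central in $N$ (you use $\gamma_c(N)$, the paper uses the center of $N$), replacing $G$ by $\overline{HZ}$, and dividing out a normal intersection. The fatal problem is the step you defer to the end. Under your reduced hypotheses ($G_1$ compactly generated totally disconnected, $Z$ closed abelian normal, $H$ cofinite, $H\cap Z=1$, $HZ$ dense), the claim that $Z$ must be compact is simply \emph{false}, so no amount of care with LE-radicals and bounded indices can establish it. Concretely, take $G_1=\ZZ_p\times\ZZ$, let $Z=\{0\}\times\ZZ$, and let $H=\{(\iota(n),n)\st n\in\ZZ\}$ be the ``diagonal'' copy of $\ZZ$, where $\iota\colon \ZZ\to\ZZ_p$ is the natural inclusion. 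Then $H$ is closed and discrete, $G_1/H\cong\ZZ_p$ via $(x,n)\mapsto x-\iota(n)$, so $H$ is a cocompact lattice, hence cofinite; moreover $H\cap Z=\{1\}$ and $HZ=\iota(\ZZ)\times\ZZ$ is dense; yet $Z\cong\ZZ$ is not compact. (This example even fits your inductive scheme with $c=1$, $N=Z$.) The conclusion of the theorem of course holds here --- $H$ \emph{is} cocompact --- but it cannot be reached by proving compactness of $Z$: cocompactness of $H$ does not make the continuous injection $Z\to G_1/H$ proper. Your equivalent un-normalized formulation, that $Z\cap H$ is cofinite in $Z$, fails in the same example.

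This is exactly why the paper's proof is structured differently, and why your scheme never actually proves the case that carries the content of the theorem. The paper's induction bottoms out at the case where $N$ is abelian, and there it never attempts to show $N$ compact; instead it proves cocompactness of $H$ directly through repeated applications of Lemma~\ref{lem:BddIndex}, using Willis's theorem (Proposition~\ref{prop:nil-Willis}) and compact presentability of the nilpotent quotient $G/N$ (Propositions~\ref{prop:Nilpotent->CptlyPres} and~\ref{prop:CompactlyPresented:DiscreteExtension}) to produce normal closures of compact open pieces, together with a bounded-exponent argument in $\Aut(N_0)$. Note also that Lemma~\ref{lem:BddIndex} requires the quotient by the abelian subgroup to possess a compact \emph{open normal} subgroup; in the paper this is supplied by nilpotency of $G/N$, whereas in your reduced situation $G_1/Z$ is only nilpotent-by-nilpotent, so even that mechanism is not available to you as stated. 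Finally, in its inductive step the paper divides out $H\cap N$ (not merely $H\cap Z$): after that, $H$ injects into the nilpotent group $G/N$ while mapping onto a dense subgroup of $G/Z$, which forces $G/Z$ to be nilpotent, i.e.\ $G$ abelian-by-nilpotent, and the base case finishes. By killing only $H\cap Z$ you lose precisely this trick, and with it the route back to the abelian-by-nilpotent case.
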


\begin{proof}
Let $G$ be a compactly generated locally compact group with a closed  normal  nilpotent subgroup $N$ such that   $G/N$ is nilpotent. We must prove that $G$ has property (M).  By Theorem~\ref{theo:reduction} we may assume that $G$ is totally disconnected. We let $H \leq G$ be a cofinite subgroup. The goal is to show that $H$ is cocompact.

We shall work by induction on the nilpotency class of $N$. We start by  assuming  that $N$ is abelian.

Let $U$ be a compact open subgroup of $G$. Set $V = U \cap N$ and let $B$ be the normal closure of $V$ in $G$. Since $V$ is relatively open in $N$, so is $B$, so that $B$ is closed. By Proposition~\ref{prop:nil-Willis}, the quotient $G/N$ has a compact open normal subgroup. By Lemma~\ref{lem:BddIndex}, this implies that $H$ is cocompact in  $\overline{HB}$. Therefore, it suffices to show the cocompactness of $\overline{HB}$ in $G$. To this end, we may replace $G$, $N$ and $H$ by $G/B$, $N/B$ by $\overline{HB}/B$. In view of that reduction, we assume henceforth that $U \cap N$ is trivial; in particular $N$ is discrete.

Since $G/N$ has a basis of identity neighborhoods consisting of compact open normal subgroups by Proposition~\ref{prop:nil-Willis}, we may replace $U$ by an open subgroup so as to ensure that $UN$ is normal in $G$. Moreover, the fact that $G$ is compactly generated and that $G/N$ is compactly presented (by Proposition~\ref{prop:Nilpotent->CptlyPres}) implies (by Proposition~\ref{prop:CompactlyPresented:DiscreteExtension}) that $N$ is finitely generated as a normal subgroup. Since $U$ is compact and $N$ is discrete, we may find a $U$-invariant finite subset $S$ of $N$ which generates $N$ as a normal subgroup. Let $N_0 = \langle S \rangle$. Thus $N_0$ is normalized by $U$, hence by $UN$. Since $UN$ is normal in $G$, we see that for any $g \in G$, the group $g N_0 g^{-1}$ is normal in $UN$, hence normalized by $U$.
Since $N_0$ is a finitely generated abelian group, there is an upper bound $q$ on the order of finite subgroups of $\Aut(N_0)$. It follows that for any $g \in G$, the quotient $U/C_U(gN_0 g^{-1})$ is of order~$\leq q$. Therefore the quotient of $U$ by $\bigcap_{g \in G} C_U(gN_0 g^{-1})$ is of exponent~$\leq q$. Since $N$ is the normal closure of $N_0$ in $G$, we have $C_U(N) = \bigcap_{g \in G} C_U(gN_0 g^{-1})$. Thus $U/C_U(N)$ is of exponent~$\leq q$.

Since $N$  and $NU$ are both closed normal subgroups of $G$, the centralizer $C_{NU}(N)$ is also a closed normal subgroup of $G$. Moreover, since $N$ is abelian, we have $C_{NU}(N) = N C_U(N)$. Observe that $G/N C_U(N)$ is a quotient of $G/N$, so that it has a compact open normal subgroup (namely the image of $U$). Moreover $C_U(N)$ is a compact relatively open normal subgroup of $N C_U(N)$. We may thus invoke  Lemma~\ref{lem:BddIndex}, which ensures that  the group $H$ is thus cocompact in $\overline{HC}$, where $C \leq N C_U(N)$ denotes the normal closure of $C_U(N)$ in $G$. Therefore, it suffices to show the cocompactness of $\overline{HC}$ in $G$. To this end, we may replace $G$, $N$ and $H$ by $G/C$, $\overline{NC}/C$ by $\overline{HC}/C$. In view of that reduction, we assume henceforth that $U$ is of exponent~$\leq q$.

(Note however that in the last reduction, the group $N$ may have lost its discreteness a priori. We could perform one more application of Lemma~\ref{lem:BddIndex} to divide out the normal closure of $U\cap N$ so as to restore the discreteness of $N$ without affecting the boundedness of the exponent of $U$. This is however not necessary for the rest of the argument, so we will continue the discussion without assuming that $N$ is discrete.)

Since $G/N$ is nilpotent, it has property (M) by Proposition~\ref{prop:nilpotent}. Therefore $\overline{HN}$ is cocompact in $G$ and we may assume that $G = \overline{HN}$. Now the intersection $H\cap N$ is normal in $H$ (because $N$ is normal in $G$) and commutes with $N$ (because $N$ is abelian), it is thus normal in $G$ since $G = \overline{HN}$. We may thus pass to the quotient $G/H\cap N$, and assume that $H \cap N$ is trivial.

After that reduction, we claim that $N$ is locally elliptic. Indeed, let $\Sigma \subset N$ be a compact subset; we need to show that the group $W = \overline{\langle \Sigma \rangle}$ is compact. Upon replacing $\Sigma$ by $U\Sigma U \cap N$, we may assume that $W$ is invariant under the $U$-action by conjugation. It follows that $UW$ is a compactly generated abelian-by-compact group. Therefore it is compact-by-\{virtually abelian\} in view of Lemma~\ref{lem:compact-by-discrete-by-compact}. Let $D$ be a discrete virtually abelian quotient of $UW$ by a compact open normal subgroup; hence $D$  is virtually $\mathbf Z^d$ for some $d \geq 0$. Now observe that  the intersection $H \cap UW$ is cofinite in $UW$ since $UW$ is open. Since $H \cap N$ is trivial, it follows that $H \cap UW$ injects in $UN/N$ which is of exponent~$\leq q$. Hence $H \cap UW$ is of exponent~$\leq q$. But the image of $H \cap UW$ in $D$ is of finite index, so that $D$ is a torsion group. Since a finitely generated virtually abelian torsion group is finite, it follows that $UW$ is compact, hence $W$ is compact as well.   This  confirms the claim.

Now we invoke again the compact presentability of $G/N$ (guaranteed by Proposition~\ref{prop:Nilpotent->CptlyPres}) to deduce that $N$ is compactly generated as a normal subgroup (see Proposition~\ref{prop:CompactlyPresented:DiscreteExtension}). Therefore we may argue as above to find a  $U$-invariant compact relatively open subgroup $W$ of $N$ such that $N$ is the normal closure of $W$ in $G$. We  then invoke Lemma~\ref{lem:BddIndex}, which ensures that $H$ is cocompact in $\overline{HN}=G$.
This finishes the proof in the case where $N$ is abelian.

\medskip
Now the induction can start, and we assume that the nilpotency class of $N$ is $n+1$. Let  $Z$ denote the center of $N$, which is closed and normal in $G$. Let also $H$ be a closed subgroup of finite covolume in $G$. The group $\overline{HZ}/Z$ is a closed subgroup of finite covolume in $G/Z$. By the induction hypothesis, it is cocompact. We may thus replace $G$ by  $\overline{HZ}$ and assume that $HZ$ is dense.

We next consider the intersection $H \cap N$. It is normalized by $H$ (since $N$ is normal in $G$) and by $Z$ (since $Z$ is central in $N$). Since $H \cap N$ is closed and $HZ$ is dense in $G$, we deduce that $H \cap N$ is normal in $G$. We may therefore replace $G$, $H$ and $N$ by their canonical images in the quotient group $G/H \cap N$, which are closed. In particular $Z$ is replaced by the closure of its image in $G/H \cap N$, and it remains true that $Z$ is an abelian closed normal subgroup and that $HZ$ is dense in $G$. After that reduction, the intersection $H \cap N$ is trivial, so that $H$ injects in the quotient $G/N$, which is compactly generated and nilpotent.
Since $H$ maps onto a dense subgroup of  $G/Z$, we infer that $G/Z$ is nilpotent, so that $G$ is abelian-by-nilpotent. The cocompactness of $H$ then follows from the base case of the induction.
\end{proof}

\subsection{On continuous finite-dimensional representations of $p$-adic Lie groups}\label{sec:p-adicLie}

The following result does not seem to appear in the literature.

\begin{prop}\label{prop:Lie}
Let $p$ be a prime and $G$ be a $p$-adic Lie group. For any continuous representation  $\varphi \colon G \to \GL_d(k)$  over a locally compact field $k$, there exists an open normal subgroup $M \leq G$ such that $\varphi(\overline{[M, M]})$ is closed in $\GL_d(k)$.
\end{prop}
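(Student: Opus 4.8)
Let me think about what structure a $p$-adic Lie group $G$ has, and how a continuous representation into $\GL_d(k)$ over a local field $k$ behaves. The key tension is that $G$ is a $p$-adic Lie group (totally disconnected, compact-by-discrete locally, with a pro-$p$ open subgroup), while $k$ could have any characteristic, and the image of $\varphi$ need not be closed. The goal is a statement about the image of a commutator subgroup being closed after passing to an open normal subgroup.

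First I would reduce to the case where $G$ is compact, indeed pro-$p$. Since $p$-adic Lie groups have a basis of identity neighborhoods consisting of open compact (in fact uniformly powerful pro-$p$) subgroups, I would choose a compact open subgroup $M_0$ that is a uniform pro-$p$ group. Because I only need an open normal subgroup, and normality can be arranged by intersecting the finitely many conjugates of $M_0$ inside a fixed compact open overgroup, I would aim to find $M$ as an open normal (pro-$p$) subgroup of $G$ on which $\varphi$ is well-controlled. The point of passing to a uniform pro-$p$ group is that its derived subgroup $\overline{[M,M]}$ is itself a nicely structured closed subgroup, and a continuous homomorphism out of a uniform pro-$p$ group is analytic/controlled by its restriction to a finite generating set.

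The heart of the argument is analyzing $\varphi$ restricted to $M$. Here I would split on the characteristic of $k$. If $k$ is non-Archimedean of residue characteristic $p$ (or Archimedean), a continuous homomorphism from a $p$-adic Lie group into $\GL_d(k)$ is forced to be highly rigid: by Lazard-type theory it is analytic on a suitable open subgroup, so $\varphi(M)$ is (a closed analytic subgroup, hence) its derived subgroup $\varphi(\overline{[M,M]})$ is a closed analytic subgroup of $\GL_d(k)$. If instead $k$ has residue characteristic $\ell \neq p$ (or characteristic $\ell \neq p$), then a continuous homomorphism from a pro-$p$ group into $\GL_d(k)$ has open kernel on a small enough subgroup: a compact subgroup of $\GL_d(k)$ has a pro-$\ell$ open subgroup and hence only finite pro-$p$ subgroups in the relevant sense, so $\varphi(M)$ is finite for suitably small open $M$, making $\varphi(\overline{[M,M]})$ finite and therefore closed. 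The main obstacle, and where I expect to spend the most care, is the first (equicharacteristic $p$) case: I must genuinely invoke the analyticity of continuous homomorphisms between $p$-adic analytic groups, verify that $\overline{[M,M]}$ maps to a closed (analytic) subgroup, and handle the fact that the image of $\varphi$ itself need not be closed, so I cannot simply say ``$\varphi$ is a morphism of Lie groups'' — I must localize to an open subgroup where the induced map on Lie algebras cuts out a closed algebraic-analytic subgroup, using that the Lie algebra of a commutator is the derived Lie subalgebra.

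Finally I would assemble these: having found an open (pro-$p$) subgroup on which $\varphi(\overline{[M,M]})$ is closed, I would replace $M$ by the intersection of its conjugates under a fixed compact open overgroup to make it normal in $G$, noting that this intersection is again open of finite index and that closedness of the image of the commutator subgroup is inherited by an open subgroup. The verification that passing to a finite-index open normal subgroup preserves the closedness conclusion is routine but should be stated explicitly, since $\overline{[M',M']}$ for $M' \le M$ need not equal $\overline{[M,M]} \cap M'$; I would instead argue directly that $\varphi(\overline{[M',M']})$ is a closed subgroup of the already-closed $\varphi(\overline{[M,M]})$, using that closedness is a local property and $\varphi(M')$ has finite index in $\varphi(M)$.
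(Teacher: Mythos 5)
There is a genuine gap, and it sits exactly where the difficulty of the proposition lies: your $M$ must be normal in $G$, not merely in a compact open subgroup. Intersecting the conjugates of your uniform pro-$p$ group $M_0$ under a fixed compact open overgroup $K$ yields a subgroup normal in $K$ only; to get normality in $G$ you would have to intersect over all $G$-conjugates, and in a non-compact $p$-adic Lie group this typically destroys openness. Concretely, in $G=\QQ_p\rtimes \QQ_p^*$ the conjugates of the compact open subgroup $\ZZ_p\rtimes\ZZ_p^*$ by the scalars $p^n$ intersect in $\{0\}\rtimes\ZZ_p^*$, which is not open; in fact every open subgroup of this $G$ that is normal in $G$ must meet the unipotent group $\QQ_p$ in a scaling-invariant open subgroup, hence contains all of $\QQ_p$ and is non-compact. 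So you cannot reduce to compact $M$, and once $M$ is non-compact your closedness claims collapse: Lazard analyticity does not make images closed (a dense copy of $\ZZ$ in $\ZZ_p$ is already the continuous image of a $0$-dimensional $p$-adic Lie group), and compactness, which would make closedness automatic, is precisely what was given up. Your final step is likewise not routine: a subgroup of a closed subgroup need not be closed, and there is no useful finite-index relation between $[M',M']$ and $[M,M]$ for $M'$ of finite index in $M$. (A smaller slip: for Archimedean $k$ the relevant phenomenon is the no-small-subgroups property, so compact open subgroups have \emph{finite} image; that case belongs with your residue characteristic $\ell\neq p$ case, not with the analytic one.)

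The missing idea, which is how the paper proceeds, is to manufacture normality from the Zariski topology rather than from shrinking compact subgroups. Since descending chains of Zariski closed sets terminate, one can choose a compact open $U\le G$ whose image has minimal Zariski closure among images of compact open subgroups; minimality forces $\overline{\varphi(U)}^Z$ to be normalized by all of $\varphi(G)$, so $M:=\varphi^{-1}\bigl(\overline{\varphi(U)}^Z\bigr)$ is open \emph{and normal in $G$}, with $\overline{\varphi(M)}^Z=\overline{\varphi(U)}^Z$. Closedness of $\varphi(\overline{[M,M]})$ is then obtained by a sandwich argument, not by compactness: by Bourbaki and Chevalley the derived algebra $[\mathfrak g,\mathfrak g]$ of the image Lie algebra is the Lie algebra of a connected Zariski closed subgroup $H\le \GL_d(k)$; the compact group $\varphi([U,U])$ is relatively open in $H$; and since $\varphi(M)$ and $\varphi(U)$ have the same Zariski closure, the commutators of $\varphi(M)$ land in $H$. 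Thus $\varphi(\overline{[M,M]})$ is a subgroup of the closed group $H$ containing the open subgroup $\varphi([U,U])$ of $H$, hence is open in $H$, hence closed. Some form of both ingredients --- the $G$-invariant Zariski closure and the open-inside-closed sandwich --- would be needed to make your outline work.
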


We shall use the following lemma, valid in arbitrary characteristic, where $\overline K^Z$ denotes the Zariski closure of a set $K$.

\begin{lem}\label{lem:Zclosure}
Let $G$ be a \tdlc group and $\varphi \colon G \to \GL_d(F)$ be a   representation over an arbitrary field $F$.

For each closed normal subgroup $H \leq G$ there exists a closed normal subgroup $M \leq H$  relatively open in $H$ and such that
$$\overline{\varphi(M)}^Z = \overline{\varphi(H \cap O)}^Z$$
for any open subgroup $O \leq G$.
\end{lem}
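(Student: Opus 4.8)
The plan is to exploit the Noetherianity of the Zariski topology on $\GL_d$ to extract a smallest Zariski closure among the groups $\overline{\varphi(H\cap O)}^Z$, and then to produce $M$ as the full preimage of that smallest closure.

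First I would record that the assignment $O \mapsto \overline{\varphi(H\cap O)}^Z$ is monotone: if $O'\leq O$ are open then $H\cap O'\leq H\cap O$, whence $\overline{\varphi(H\cap O')}^Z\subseteq \overline{\varphi(H\cap O)}^Z$. Since the intersection of two open subgroups is again open, the family $\mathcal F=\{\overline{\varphi(H\cap O)}^Z : O\leq G \text{ open}\}$ is downward directed. As $\GL_d$ is a Noetherian topological space for the Zariski topology, $\mathcal F$ contains a minimal element $L=\overline{\varphi(H\cap O_0)}^Z$. Directedness then upgrades minimality to a genuine minimum: for any open $O$, applying minimality to $O\cap O_0$ gives $\overline{\varphi(H\cap(O\cap O_0))}^Z=L$, so that $L\subseteq \overline{\varphi(H\cap O)}^Z$; in particular $\overline{\varphi(H\cap O)}^Z=L$ for every open $O\leq O_0$.

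The key step is to show that $\varphi(G)$ normalizes $L$. Here I would use that $H$ is normal in $G$: for $g\in G$ one has $H\cap gO_0g^{-1}=g(H\cap O_0)g^{-1}$, so this is again a member of the defining family, and since conjugation by $\varphi(g)$ is an automorphism of the variety $\GL_d$ its Zariski closure is exactly $\varphi(g)L\varphi(g)^{-1}$. By the previous paragraph $L\subseteq \varphi(g)L\varphi(g)^{-1}$, and conjugation preserves dimension and the number of irreducible components; as algebraic groups are equidimensional, a containment of algebraic groups of equal dimension and equal number of components is an equality. Hence $\varphi(g)L\varphi(g)^{-1}=L$ for all $g\in G$.

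Finally I would set $M=H\cap\varphi^{-1}(L)$. Since $L$ is a subgroup of $\GL_d$, the set $M$ is a subgroup of $H$; it contains $H\cap O_0$ because $\varphi(H\cap O_0)\subseteq L$, and a subgroup containing a relatively open subgroup is itself relatively open, hence closed, in $H$. Normality of $M$ in $G$ follows from the normalization statement just proved together with the normality of $H$. Moreover $\varphi(H\cap O_0)\subseteq\varphi(M)\subseteq L$ forces $\overline{\varphi(M)}^Z=L$, so that $M$ realizes the stationary value $L=\overline{\varphi(H\cap O)}^Z$ attained by all sufficiently small open $O$, namely those with $H\cap O\subseteq M$. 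The step I expect to be the main obstacle is the normalization of $L$: one must argue purely algebraically, since no topology on $F$ is available, that the minimal closure is conjugation-invariant, and it is precisely the equidimensionality of algebraic groups that converts the a priori one-sided containment coming from minimality into the desired equality.
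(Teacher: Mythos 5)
Your proof is correct, and its skeleton is the same as the paper's: use Noetherianity of the Zariski topology to extract a minimal element $L$ among the closures $\overline{\varphi(H\cap O)}^Z$, use normality of $H$ to see that this family is stable under conjugation by $\varphi(G)$, and take $M=H\cap\varphi^{-1}(L)$. The only real divergence is at the step you flag as the main obstacle, and there your solution is heavier than it needs to be, in two respects. First, the dimension argument is superfluous given what you have already proved: since $L\subseteq\varphi(g)L\varphi(g)^{-1}$ holds for \emph{every} $g\in G$, you may apply it to $g^{-1}$ and conjugate by $\varphi(g)$ to obtain $\varphi(g)L\varphi(g)^{-1}\subseteq L$, hence equality --- no algebraic geometry beyond Noetherianity is used anywhere. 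The paper's device is different but equally soft: for $V=g^{-1}Ug\cap U$, both $V$ and $gVg^{-1}=U\cap gUg^{-1}$ are open subgroups contained in $U$, so minimality applies twice and gives $\varphi(g)\overline{\varphi(H\cap U)}^Z\varphi(g)^{-1}=\overline{\varphi(H\cap gVg^{-1})}^Z=\overline{\varphi(H\cap U)}^Z$ outright. Second, as you yourself sense, the facts you invoke (equidimensionality, and ``equal dimension plus equal number of components forces equality'') are classical for algebraic groups over an algebraically closed field; they are legitimately quotable here if one reads $\overline{\;\cdot\;}^Z$ as closure in $\GL_d(\bar F)$, where Zariski closures of subgroups are honest algebraic groups, but if one insists on closures inside $\GL_d(F)$ with its $F$-Zariski topology --- and $F$ is an arbitrary field --- these facts, while true, are not off-the-shelf and would need a separate topological argument; either of the two fixes above avoids the issue entirely. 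Finally, a point in your favor: your closing formulation --- equality $\overline{\varphi(M)}^Z=\overline{\varphi(H\cap O)}^Z$ for all open $O$ with $H\cap O\subseteq M$, together with only the inclusion $\overline{\varphi(M)}^Z\subseteq\overline{\varphi(H\cap O)}^Z$ in general --- is what is actually provable; equality for \emph{all} open $O$ can fail (take $G=H=\ZZ$ discrete and $\varphi(n)=2^n\in\GL_1(\QQ)$, where $O=G$ gives a strictly larger closure than $O=\{0\}$), and this weaker form is also exactly what the paper's own argument yields and what is used later in the proof of Proposition~\ref{prop:Lie}, where the lemma is applied to open subgroups contained in $M$.
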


\begin{proof}
Since any descending chain of Zariski closed subsets terminates, there exists a compact open subgroup $U \leq G$ such that the Zariski-closure $\overline{\varphi(H \cap U)}^Z$ is minimal among all Zariski closures of images of relatively open subgroups of $H$.
Let now $g \in G$ and consider the compact open subgroup $V = g^{-1} U g \cap U$. We have
$$\varphi(g) \overline{\varphi(H \cap U)}^Z \varphi(g)^{-1} = \varphi(g) \overline{\varphi(H \cap V)}^Z \varphi(g)^{-1} =  \overline{\varphi(H \cap g Vg^{-1} )}^Z = \overline{\varphi(H \cap U)}^Z.$$
Thus $ \overline{\varphi(H \cap U)}^Z$ is normalized by $\varphi(G)$, so that $M = H \cap \varphi^{-1}(\overline{\varphi(H \cap U)}^Z)$ is a   normal subgroup of $G$. Moreover  $M$  contains $H \cap U$ and is thus relatively open in $H$, hence closed in $G$. It enjoys the required property by construction.
\end{proof}

\begin{proof}[Proof of Proposition~\ref{prop:Lie}]
Let $M \leq G$ be the open normal subgroup of $G$ obtained by applying Lemma~\ref{lem:Zclosure} to $H = G$.
Let also  $\mathfrak g$ denote the image of the $p$-adic Lie algebra of $G$ induced by $\varphi$. By \cite{Bourbaki}*{Ch.~III, \S9, Prop.~6}, there exists a compact open subgroup $U \leq M$ such that the Lie algebra of the derived group of $\varphi(U)$ coincides with the derived Lie algebra $[\mathfrak g, \mathfrak g]$. A result of Chevalley (see \cite{Borel}*{Ch.~II, Cor.~7.9}) now ensures that the derived Lie algebra $[\mathfrak g, \mathfrak g]$ is the Lie algebra of a unique   connected Zariski-closed subgroup $H \leq \GL_d(k)$.  Since $H$ is algebraic, its Lie algebra as an algebraic group coincides with its Lie algebra as a $p$-adic analytic group. It follows that the compact group $\varphi([U, U])$, whose Lie algebra is  $[\mathfrak g, \mathfrak g]$, must be relatively open in $H$. Moreover we have $\overline{\varphi([U, U])}^Z = H$. Since $\overline{\varphi(U)}^Z = \overline{\varphi(M)}^Z$, we infer that $H$ is normalized by $\overline{\varphi(M)}$ and that $\overline{\varphi(M)}H/H$ is abelian. In particular we have $\varphi(\overline{[M, M]})\leq \overline{\varphi([M, M])} \leq H$.

Since $\varphi([U, U])$ is relatively open in $H$ and $U \leq M$, it follows that $\varphi(\overline{[M, M]})$ is relatively open in $H$, hence closed.
\end{proof}

\subsection{On compact linear groups}\label{sec:Pink}

The structure of compact subgroups of linear algebraic groups over local fields has been described in a seminal paper due to R.~Pink \cite{Pink}. We shall use the following statement that follows from his work.

\begin{thm}\label{thm:Pink}
Let $k$ be a non-Archimedean local field  and $Q$ be a compact subgroup of $\GL_d(k)$. Then  there exists closed subgroups $U \leq R \leq S \leq Q_1 \leq Q$ such that:
\begin{enumerate}[(i)]
\item $Q_1$ is pro-$p$, where $p$ is the residue characteristic of $k$. Moreover $Q_1$ is open normal in $Q$, and $U, R$ and $S$ are normal in $Q_1$.

\item $U$ is nilpotent; moreover, if the characteristic of $k$ is $p>0$, then every finitely generated subgroup of $U$ is a finite $p$-group of bounded exponent.

\item $[Q, R] \leq U$.

\item $S/R$ is a finite direct product of non-virtually abelian hereditarily just-infinite groups that are   isomorphic to compact open subgroups in simple algebraic groups over local fields.

\item $Q_1/S$ is abelian of finite exponent.

\end{enumerate}
\end{thm}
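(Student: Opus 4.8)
The plan is to derive the statement from R.~Pink's structure theory of compact linear groups~\cite{Pink}, after a short preliminary reduction that produces the pro-$p$ subgroup $Q_1$. First I would construct $Q_1$. Since $Q$ is a compact subgroup of $\GL_d(k)$ and $k$ is non-Archimedean local, $Q$ stabilizes an $\cO$-lattice in $k^d$ (where $\cO$ is the valuation ring of $k$): starting from any lattice $L_0$, the set of lattices $q L_0$ for $q \in Q$ is bounded by compactness, and the lattice they generate is $Q$-invariant. Hence $Q$ is contained in a conjugate of the maximal compact subgroup $\GL_d(\cO)$. The first principal congruence subgroup of $\GL_d(\cO)$ is an open normal pro-$p$ subgroup, with $p$ the residue characteristic of $k$; intersecting its conjugate with $Q$ yields an open normal pro-$p$ subgroup $Q_1 \trianglelefteq Q$. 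This establishes the pro-$p$ and open-normal assertions of (i); the normality of $U, R, S$ in $Q_1$ will be built into their construction below.

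Next I would feed $Q_1$ into Pink's main theorem. Pink attaches to a compact subgroup of $\GL_d(k)$ a canonical normal series whose top quotient is a finite product of compact open subgroups of simple algebraic groups over local fields, lying over a solvable radical that splits further into a nilpotent (unipotent-type) part and a toral part. Concretely, I would read off from that series the chain $U \le R \le S$: take $U$ to be the nilpotent normal subgroup arising from the unipotent radical of the algebraic hull, take $S$ to be the preimage of the semisimple product, and take $R$ to be the subgroup lying between $U$ and $S$ characterized by $[Q_1, R] \le U$, that is, the part of the radical on which $Q_1$ acts trivially modulo $U$ (this contains the central toral directions). Properties (iii)--(iv) then follow almost by definition: $[Q, R] \le U$ holds because $R$ is defined by this commutator condition and $U$ is $Q$-invariant, while $S/R$ is the product of simple factors, each of which is non-virtually abelian and hereditarily just-infinite, since a compact open subgroup of a simple algebraic group over a local field has all its open subgroups again commensurable with such a lattice and these are just-infinite.

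The positive-characteristic refinement in (ii) I expect to be clean: when $\operatorname{char} k = p > 0$, every element $u$ of the unipotent part is of the form $u = 1 + n$ with $n$ nilpotent satisfying $n^d = 0$, and the identity $u^{p^m} = 1 + n^{p^m}$ (valid in characteristic $p$ for a single element) forces $u^{p^m} = 1$ as soon as $p^m \ge d$. Thus $U$ is a nilpotent torsion group of exponent dividing $p^m$, and since a finitely generated nilpotent torsion group is finite, each finitely generated subgroup of $U$ is a finite $p$-group of bounded exponent. For (v), the quotient $Q_1/S$ is the residual abelian ``diagonal'' part surviving after the central and semisimple directions have been absorbed into $R$ and $S$; I would verify its finite exponent by tracking which toral directions remain outside $R$ and checking they form a bounded piece, using that the large torsion-free toral directions (such as $1 + t\FF_q(\!(t)\!)$-type factors) have already been placed in $R$.

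The main obstacle is that Pink's output is phrased through algebraic hulls and auxiliary field extensions rather than as an explicit subgroup chain inside $Q$, so the real work lies in the bookkeeping: showing that his canonical normal subgroups restrict to closed normal subgroups of $Q_1$ realizing $U \le R \le S$, and that the nilpotency, just-infiniteness, and bounded-exponent bounds are preserved under intersection with $Q_1$ and passage to the relevant open subgroups. In particular, pinning down exactly which toral piece constitutes $Q_1/S$ so that the finite-exponent claim (v) survives in positive characteristic is the delicate point, and I would treat it as the crux of the verification rather than expecting it to be automatic.
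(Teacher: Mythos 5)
Your overall strategy --- reduce the statement to Pink's structure theory for compact subgroups of $\GL_d(k)$ --- is the same as the paper's, and two of your ingredients are sound: the construction of $Q_1$ (lattice stabilization gives $Q \leq g\GL_d(\OO)g^{-1}$, then intersect with the first congruence subgroup), and the positive-characteristic part of (ii) (unipotent matrices satisfy $u^{p^m}=1$ once $p^m \geq d$, and a finitely generated nilpotent torsion group is finite). However, there is a genuine gap at the point where the paper does its real work. Defining $R$ by the condition $[Q_1,R]\leq U$ makes (iii) a tautology but transfers all of its content to (iv), which you never discharge: to know that $S/R$ is exactly the semisimple product, you must prove that the \emph{entire} toral part of the solvable radical is centralized by $Q_1$ modulo $U$; otherwise $S/R$ retains a toral quotient and (iv) fails. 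The paper supplies precisely this step: it chooses $Q_1$ so that its Zariski closure $\mathbf{Q}$ is Zariski-\emph{connected}, takes $\mathbf{R}$ and $\mathbf{U}$ to be the solvable and unipotent radicals, sets $R = \overline{\mathbf{R}\cap Q_1}$ and $U = \overline{\mathbf{U}\cap Q_1}$, and observes that $\mathbf{R}/\mathbf{U}$ is a connected abelian normal subgroup of the connected reductive group $\mathbf{Q}/\mathbf{U}$, hence central, giving $[\mathbf{Q},\mathbf{R}]\leq \mathbf{U}$. Connectedness of $\mathbf{Q}$ is essential for this centrality argument, and your congruence-subgroup $Q_1$ does not come with it (this is fixable by shrinking $Q_1$ to a further open normal subgroup, but it must be said).

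Second, your justification of hereditary just-infiniteness in (iv) is circular: ``all open subgroups are commensurable with such a group and these are just-infinite'' presupposes exactly what is to be proved, and just-infiniteness is not a commensurability invariant. The paper does not prove (iv)--(v) from scratch either; it quotes \cite{Pink}*{Cor.~0.5} together with its extension \cite{CapStu}*{Th.~4.13}, and it is the latter that supplies the decomposition into non-virtually abelian hereditarily just-infinite factors. Relatedly, you misidentify the crux: assertion (v) is not the delicate bookkeeping issue you fear --- the abelian-of-finite-exponent quotient appears verbatim as $\Gamma_1/\Gamma_2$ in Pink's Cor.~0.5, and one only needs $Q_1 \leq \Gamma_1$, which is automatic after intersecting since $\Gamma_1$ has finite index in $Q$. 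The genuinely delicate points are the two named above: the centrality of the toral part (the paper's proof of (iii) via the connected Zariski closure), and the just-infinite structure of the semisimple quotient (the citation to \cite{CapStu}).
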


We recall that a profinite group is called \textbf{just-infinite} if every non-trivial closed normal subgroup is of finite index, and \textbf{hereditarily just-infinite} if every open subgroup is just-infinite.

\begin{proof}
There exists an open normal subgroup $Q_1 \leq Q$ which is pro-$p$, and whose Zariski-closure $\mathbf Q$ is Zariski-connected. Let $\mathbf R$ be the solvable radical of $\mathbf Q$, and $\mathbf U$ be the unipotent radical of $\mathbf R$. We set
$$R_1 =  \mathbf R \cap Q_1, \ U_1=  \mathbf U \cap Q_1, \ R = \overline{R_1} \text{ and } U = \overline{U_1}.$$
Then Assertion (i) holds by construction.

Since $\mathbf U$ is unipotent, it is nilpotent, and so are thus $U_1$ and $U$. If moreover the characteristic of $k$ is $p>0$, then every finitely generated subgroup of $\mathbf U$ is a finite $p$-group of bounded exponent. That property is thus inherited by $U_1$. The closure $\overline{U_1}$ is thus a pro-$p$ and all of whose elements are torsion (of bounded exponent). Every finitely generated subgroup of $\overline{U_1}$ is thus linear and $p$-torsion, hence a finite $p$-group (of bounded exponent). This proves (ii).

Since $\mathbf R/\mathbf U$ is a connected abelian normal subgroup of the connected reductive group $\mathbf Q/\mathbf U$, it is contained in the center. Thus we have $[\mathbf Q, \mathbf R] \leq \mathbf U$. This implies $[Q_1, R_1]\leq U_1$, whence $[Q_1, R] \leq U$. This proves (iii).

The image of $Q_1$ in the semi-simple group $\mathbf Q/\mathbf R$ is Zariski-dense. Therefore, the image of the solvable normal subgroup $R$ of $Q_1$ is finite, and the quotient $Q_1/R$ has a maximal solvable normal subgroup which must be finite. Now the existence of a normal subgroup $S \leq Q_1$ containing $R$ follows from Pink's work \cite{Pink}*{Cor.~0.5} and its extension \cite{CapStu}*{Th.~4.13}.
\end{proof}

\subsection{Structure of amenable linear groups}\label{sec:linearStructure}

Let $k$ be a locally compact field.
A locally compact group $G$ is called \textbf{linear over the field $k$}  (or simply \textbf{linear}) if there is an integer $d >0$, a locally compact field $k$ and a continuous injective homomorphism $G \to \GL_d(k)$. The following result describes the algebraic structure of amenable linear locally compact groups.

\begin{thm}\label{thm:StructureAmenableLinear}
Let $G$ be an amenable locally compact group. Assume that $G$ is linear over a locally compact field $k$.

\begin{enumerate}[(i)]
\item If $k$ is connected, then $G$ is Lie group, $G^\circ$ is solvable-by-compact and $G/G^\circ$ is virtually solvable.

\item If $k$ is disconnected, then there is a prime $p$ such that the $p$-radical $\LFp(G)$ is open in $G$.

\end{enumerate}

\end{thm}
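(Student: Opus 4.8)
The plan is to treat the two cases separately, after recording that a locally compact field is either $\RR$ or $\BC$ (the connected case) or a non-Archimedean local field with some residue characteristic $p$ (the disconnected case), and after isolating the one place where amenability really enters: a topological group that is amenable and Zariski-dense in a semisimple $k$-group $\mathbf S$ forces $\mathbf S$ to be $k$-anisotropic, since otherwise an amenable group acting on the (compact) Furstenberg boundary of $\mathbf S(k)$ would produce an invariant probability measure living on a proper subvariety, contradicting Zariski density. Throughout I would write $\mathbf L=\overline{\varphi(G)}^Z$ for the Zariski closure (after restricting scalars from $\BC$ to $\RR$ if necessary), $\mathbf R$ for its solvable radical, $\mathbf U$ for the unipotent radical, and $\mathbf S=\mathbf L^\circ/\mathbf R$ for the semisimple quotient; the displayed principle then gives that $\mathbf S$ is anisotropic, so $\mathbf S(k)$ is compact.

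For (i), first I would show that $G$ is a Lie group: since $\GL_d(k)$ has no small subgroups, the Gleason--Yamabe theorem provides arbitrarily small compact normal subgroups $K\le G$ with $G/K$ a Lie group, and choosing $K$ inside the $\varphi$-preimage of a subgroup-free neighbourhood forces $\varphi(K)=\{1\}$, whence $K=\{1\}$ by injectivity. Then $G^\circ$ is a connected amenable Lie group, so its semisimple quotient is compact and $G^\circ$ is solvable-by-compact. The substantive point is that $G/G^\circ$ is virtually solvable. Here I would pass to the finite-index open subgroup $G_0=\varphi^{-1}(\mathbf L(\RR)^\circ)$, set $N=\varphi^{-1}(\mathbf R(\RR))$, and observe two things: $N$ injects into the solvable group $\mathbf R(\RR)$ and is therefore abstractly solvable, so its component group $\pi_0(N)=N/N^\circ$ is solvable; while $G_0/N$ injects into the compact group $\mathbf S(\RR)$, to which an induction on $\dim\mathbf L$ applies (the Zariski closure of its image has dimension $\le\dim\mathbf S<\dim\mathbf L$ as soon as $\mathbf R\ne 1$), showing that its component group is virtually solvable. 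The decisive use of amenability is that a discrete quotient of an amenable group contains no nonabelian free subgroup, so the Tits alternative yields virtual solvability once linearity over a characteristic-zero field is in hand. Right-exactness of the functor $\pi_0$ then exhibits $G/G^\circ$ as a solvable group extended by a virtually solvable group, and the elementary fact that a solvable-by-(virtually solvable) group is virtually solvable finishes the case.

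For (ii), $k$ is non-Archimedean, so $\GL_d(k)$ is totally disconnected, $\varphi(G^\circ)=\{1\}$, and hence $G$ is itself \tdlc. Let $p$ be the residue characteristic. Applying Pink's structure theorem (Theorem~\ref{thm:Pink}) to the compact group $\overline{\varphi(U)}$ for a compact open $U\le G$ produces an open normal pro-$p$ subgroup, which pulls back (a continuous bijection of compacta is a homeomorphism) to a compact open pro-$p$ subgroup of $G$; this identifies the prime we want. To see that $\LFp(G)$ is open I would build an open normal locally $p$-elliptic subgroup out of the algebraic structure of $\mathbf L$: the unipotent radical points $\mathbf U(k)$ form a locally $p$-elliptic normal subgroup, each $\varpi^{-n}\mathbf U(\cO_k)\varpi^{n}$ being pro-$p$ because a unipotent group over the residue field is a finite $p$-group, while the reductive quotient, being a torus times an anisotropic semisimple group, is compact extended by a group of the shape $\ZZ^{r}\times(\text{prime-to-}p\ \text{finite})$. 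Assembling the unipotent part with the congruence pro-$p$ subgroup of the reductive part, and using that locally $p$-elliptic groups are closed under extensions (Proposition~\ref{prop:Locally-p-elliptic}(iii)), yields an open normal locally $p$-elliptic subgroup, so $\LFp(G)$ is open.

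The main obstacle in (i) is precisely the virtual solvability of the component group: amenability cannot be applied to $G$ itself (the subgroup $SU(2)\le\GL_2(\BC)$ is amenable yet contains free subgroups), so it must be exploited indirectly through the absence of free subgroups in \emph{discrete} quotients together with the anisotropy of $\mathbf S$; the genuinely delicate base of the induction is the case where $\overline{\varphi(G)}$ is compact, which I would settle by showing that a connected amenable subgroup dense in a compact semisimple group must be the whole group. The main obstacle in (ii) is that $\varphi$ need not be proper, so the subgroups $\mathbf U(k),\mathbf R(k),\dots$ cannot be naively pulled back along $\varphi^{-1}$ while preserving local $p$-ellipticity; I would circumvent this by first establishing the structure for the closed subgroup $\overline{\varphi(G)}\le\GL_d(k)$, where these intersections are genuine closed subgroups and relative compactness is automatic, and only then transferring the openness of the $p$-radical back along $\varphi$.
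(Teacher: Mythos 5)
Your treatment of part (ii) has a genuine gap, located exactly at the step you defer to the end: ``transferring the openness of the $p$-radical back along $\varphi$.'' Proving that $\LFp(\overline{\varphi(G)})$ is open in the \emph{closed} subgroup $\overline{\varphi(G)} \leq \GL_d(k)$ is the easy half of the theorem: it is precisely the Guivarc'h--R\'emy theorem, which the paper quotes as Theorem~\ref{thm:GuivarchRemy} (your parabolic-plus-compact-Levi construction is essentially a sketch of its proof). But no general mechanism transfers this back to $G$. Openness pulls back along the continuous map $\varphi$, but local $p$-ellipticity does not, because $\varphi$ is not proper: for a finite subset $F$ of $\varphi^{-1}(H)$, with $H \leq \overline{\varphi(G)}$ closed and locally $p$-elliptic, the closure of $\langle F\rangle$ in $G$ can be non-compact even though $\varphi(F)$ lies in a pro-$p$ group. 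Concretely, take $G=\ZZ$ with the discrete topology, embedded in $\GL_1(\QQ_p)$ by $n \mapsto (1+p)^n$. Then $\overline{\varphi(G)}$ is an infinite closed subgroup of the pro-$p$ group $1+p\ZZ_p$, hence equal to its own $p$-radical, while $\LFp(G)=\{1\}$ because $G$ is torsion-free and discrete; thus $\varphi^{-1}\bigl(\LFp(\overline{\varphi(G)})\bigr)=G$ is not contained in $\LFp(G)$, and the conclusion of the theorem holds in this example only for the unrelated reason that every subgroup of a discrete group is open. (The discrete Heisenberg group over $\ZZ$ sitting densely in the unipotent upper-triangular group over $\ZZ_p$ exhibits the same phenomenon non-abelianly.) So after your reduction to the closed group, the assertion that $\LFp(G)$ itself is open is not a formality; it is the entire content of the theorem, and your proposal stops where the actual proof begins.

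The paper's proof shows what is needed to fill this hole, and it forces a dichotomy on the characteristic of $k$ that your sketch never makes. After the same first reduction (replace $G$ by an open normal subgroup so that $\overline{\varphi(G)}$ is locally $p$-elliptic, via Theorem~\ref{thm:GuivarchRemy}), there are only two ways to make the pullback legitimate, one per characteristic. In characteristic zero, $G$ is a $p$-adic Lie group and Proposition~\ref{prop:Lie} produces an open normal subgroup $M \leq G$ such that $\varphi(\overline{[M,M]})$ is \emph{closed}; only then does the open mapping theorem make $\varphi$ a topological isomorphism onto its image on that subgroup, so that local $p$-ellipticity can be pulled back, and one concludes with Proposition~\ref{prop:Locally-p-elliptic}. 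In characteristic $p$, closedness of images is unavailable, and the paper instead uses Pink's theorem (Theorem~\ref{thm:Pink}); the crucial point is item (ii) there, that the relevant unipotent-type subgroup is locally $p$-elliptic \emph{as a discrete group} (its finitely generated subgroups are finite $p$-groups of bounded exponent) --- an abstract, purely group-theoretic property which therefore does pull back along the injective $\varphi$. Moreover, in that case amenability of $G$ must be invoked a second time (in the Claim that $S_1/R_1$ is compact, via Lemma~\ref{lem:FGpro-p} and the Tits alternative); your sketch uses amenability only through the structure of the closure $\overline{\varphi(G)}$, which is another symptom that the transfer is being assumed rather than proved. Your part (i), by contrast, is a reasonable direct outline (the paper handles (i) by citing \cite{CaMo_amenis} together with Furstenberg's theorem), though the base case of your induction, where the Zariski closure is semisimple and anisotropic, still needs an argument: there $G/G^\circ$ is a discrete amenable group that is not obviously linear, so the Tits alternative cannot be applied to it directly.
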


We emphasize that the definition of linearity does not require the image of the faithful linear representation to be closed. The first step in the proof of Theorem~\ref{thm:StructureAmenableLinear} is actually to check the statement for amenable closed subgroups of $\GL_d(k)$. In the non-Archimedean case, this can be proved with the aid of the following statement, due to Guivarc'h--R\'emy \cite{GuiRem}.

\begin{thm}\label{thm:GuivarchRemy}
Let $k$ be a non-Archimedean local field of residue characteristic $p$ and $A \leq \SL_d(k)$ be a closed amenable subgroup. Then $\LFp(A)$ is open, and the discrete quotient $A/\LFp(A)$ is virtually abelian.

More precisely $A$ has an open normal subgroup of finite index $A^0$ contained in a parabolic subgroup $P$ of $\SL_d(k)$, and the closure of the image of $A^0$ in the Levi factor of $P$ is compact-by-\{discrete virtually abelian\}.
\end{thm}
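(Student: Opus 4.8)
The plan is to prove the statement in three stages: first to place a finite-index open subgroup of $A$ inside a parabolic subgroup of $\SL_d(k)$, then to analyse its image in the Levi quotient, and finally to read off the locally $p$-elliptic structure. Throughout, the hypothesis of amenability enters only through the fixed-point property, and the real work lies in the structure theory of the reductive and compact pieces that result.

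First I would use amenability to produce the parabolic. Let $\mathcal{F} = \SL_d(k)/P_0$ be the flag variety attached to a minimal parabolic $P_0$; it is compact, so by amenability $A$ fixes a probability measure $\nu$ on $\mathcal{F}$. Furstenberg's lemma on stabilisers of measures on projective $G$-varieties shows that the stabiliser of $\nu$ is, up to finite index, the group of $k$-points of a proper algebraic subgroup, and by pushing $\nu$ onto successively smaller partial flag varieties one extracts an open normal subgroup $A^0 \leq A$ of finite index together with a parabolic $P \leq \SL_d(k)$ such that $A^0 \leq P$. I would take $P$ minimal with this property. (Alternatively, and this is presumably the route of Guivarc'h--R\'emy, one lets $A$ act on the Bruhat--Tits building of $\SL_d(k)$ and invokes amenability to produce a fixed point in its group-theoretic compactification, whose stabiliser is commensurable to a parabolic.)

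Next I would analyse the Levi image. Write $U = R_u(P)$, let $L = P/U$ be the Levi quotient with projection $\pi \colon P \to L$, and set $B = \overline{\pi(A^0)}$, a closed amenable subgroup of the reductive group $L$. Decompose $L$ up to isogeny as $Z \cdot L_{\mathrm{an}} \cdot L_{\mathrm{is}}$, with $Z$ a central torus and $L_{\mathrm{an}}$, $L_{\mathrm{is}}$ the $k$-anisotropic and $k$-isotropic parts of the derived group. By the minimality of $P$, the image $\pi(A^0)$ lies in no proper parabolic of $L$ (since parabolics of $L$ pull back to parabolics of $\SL_d(k)$ contained in $P$); since a finite-index subgroup of any amenable subgroup of an isotropic semisimple group lies in a proper parabolic, this forces $L_{\mathrm{is}}$ to be trivial. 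Hence $B \leq (Z \cdot L_{\mathrm{an}})(k)$, and because $L_{\mathrm{an}}(k)$ is compact while $Z(k)$ is compact-by-$\BZ^r$, the group $B$ is compact-by-(finitely generated abelian). This is precisely the ``more precisely'' assertion.

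Finally I would deduce the $\LFp$ statement, and this is the step I expect to be the main obstacle. The $k$-points of the unipotent radical $U$ form a locally $p$-elliptic group, since in residue characteristic $p$ every compact subgroup of a unipotent group is pro-$p$; thus $A^0 \cap U$ is a closed normal locally $p$-elliptic subgroup of $A^0$. As $B$ is compact-by-$\BZ^r$, its compact part $K$ is open in $B$, so the preimage $M \leq A^0$ of $K$ is open with $A^0/M$ virtually $\BZ^r$. The delicate point is that $\LFp$ only captures pro-$p$ phenomena, whereas $K$ could a priori carry non-abelian, non-$p$ composition factors; here I would invoke Pink's Theorem~\ref{thm:Pink}, which exhibits in $K$ an open pro-$p$ normal subgroup whose quotient is built from hereditarily just-infinite factors coming from (necessarily anisotropic) simple groups over local fields. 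The essential input is then the classification of such anisotropic simple factors, which are of inner type $A$, that is of the form $\SL_1(D)$ and hence pro-$p$-by-abelian; consequently $K$ is itself pro-$p$-by-(virtually abelian). Pulling this pro-$p$ layer back through $\pi$ and amalgamating it with $A^0 \cap U$ yields an open normal locally $p$-elliptic subgroup sitting inside $\LFp(A)$, whence $\LFp(A)$ is open and $A/\LFp(A)$ is virtually abelian. Controlling this compact locally elliptic part uniformly, including the positive-characteristic case $k = \FF_q(\!(t)\!)$, is where Pink's structure theorem and the classification of anisotropic factors are genuinely indispensable.
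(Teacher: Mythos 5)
Your Stage 2 contains a genuine error that breaks the argument. You claim that \emph{a finite-index subgroup of any amenable subgroup of an isotropic semisimple group lies in a proper parabolic}, and use this to conclude that the isotropic part $L_{\mathrm{is}}$ of the Levi is trivial, so that $B \leq (Z \cdot L_{\mathrm{an}})(k)$. This lemma is false: a compact open subgroup such as $\SL_2(\ZZ_p) \leq \SL_2(\QQ_p)$ is amenable but Zariski dense, hence no finite-index subgroup of it lies in any proper parabolic. Indeed, taking $A = \SL_d(\ZZ_p)$ in the theorem itself, the minimal parabolic containing a finite-index subgroup of $A$ is $\SL_d(\QQ_p)$, whose Levi is all of $\SL_d(\QQ_p)$ --- maximally isotropic. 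The Furstenberg-type dichotomy you need has a second branch you have omitted: an amenable closed subgroup of a semisimple group over a local field either lies, up to finite index, in a proper parabolic, \emph{or} has relatively compact image modulo the center. Minimality of $P$ forces the image of $A^0$ in $L$ into the \emph{compact} branch (modulo the central torus, whose $k$-points are compact-by-$\ZZ^r$); that is exactly how one gets ``compact-by-\{discrete virtually abelian\}'', and it is strictly weaker than anisotropy of $L$.

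This error propagates into Stage 3: the parenthetical ``(necessarily anisotropic)'' is unjustified, so the compact part $K$ of $B$ can perfectly well be a compact open subgroup of an isotropic simple group (again $\SL_2(\ZZ_p)$), and the classification of anisotropic factors as $\SL_1(D)$ does not apply to it. Fortunately, the entire apparatus of Theorem~\ref{thm:Pink} and that classification is unnecessary here, and this is where your proposal diverges most from the paper. The paper cites Guivarc'h--R\'emy for the structural statement (your Stages 1--2, with the corrected dichotomy), and then disposes of the $\LFp$ statement with one elementary observation: every compact subgroup of $\SL_d(k)$ stabilizes a lattice, hence is conjugate into $\SL_d(\mathcal{O}_k)$, and the principal congruence subgroup gives it an open normal pro-$p$ subgroup; so every compact subgroup of $\SL_d(k)$ is virtually pro-$p$. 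Combining this with the local $p$-ellipticity of the unipotent radical of $P$ and the extension property of Proposition~\ref{prop:Locally-p-elliptic}(iii) immediately yields that $\LFp(B)$ is open with $B/\LFp(B)$ virtually abelian, and hence the same for $A^0$ and for $A$. If you repair Stage 2 by restoring the compact branch of the dichotomy and replace Stage 3 by the congruence-subgroup argument, your outline becomes a correct (and self-contained) proof.
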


\begin{proof}
It follows from \cite{GuiRem}*{Th.~33, Cor.~34 and the definitions from \S1.2} that $A$ has an open normal subgroup $A^0$ satisfying the required properties. In particular $A^0$ is contained in a closed subgroup $B$ of $P$ which contains the unipotent radical of $P$, and whose image in the Levi factor is  compact-by-\{discrete abelian\}. Since the unipotent radical of $P$ is locally $p$-elliptic, and since every compact subgroup of $\SL_d(k)$ is virtually pro-$p$, we deduce that $\LFp(B)$ is open, and that $B/\LFp(B)$ is virtually abelian. Therefore $A^0 \cap \LFp(B) \leq \LFp(A^0)$ is open in $A^0$, and the quotient $A^0/\LFp(A^0)$ is also virtually abelian. This finally implies that the same properties hold for $A$.
\end{proof}

The proof of Theorem~\ref{thm:StructureAmenableLinear} also requires the following subsidiary facts.

\begin{lem}\label{lem:discrete-by-compact}
Let $G$ be a \tdlc group. If the center of $G$ is cocompact, then the locally elliptic radical of $G$ is open.
\end{lem}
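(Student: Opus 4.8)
The plan is to produce a \emph{compact open normal} subgroup of $G$: since any compact group is trivially locally elliptic, such a subgroup will be contained in the LE-radical $\LF(G)$, and its openness will immediately force $\LF(G)$ to be open. The whole argument thus reduces to constructing a compact open normal subgroup, which I would obtain as the normal core of a compact open subgroup, exploiting the central-by-compact structure.

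First I would invoke van Dantzig's theorem to fix a compact open subgroup $U \leq G$, and write $Z = Z(G)$ for the center. Because $Z$ is central (hence normal) and cocompact, while $U$ is open, the product $UZ$ is an open cocompact subgroup of $G$; being open and cocompact it has finite index, say $G = \bigsqcup_{i=1}^n g_i UZ$. The key observation is then that $U$ has only finitely many conjugates in $G$: given $g \in G$, write $g = g_i u z$ with $u \in U$ and $z \in Z$, and compute
\[
gUg^{-1} = g_i u z\, U\, z^{-1} u^{-1} g_i^{-1} = g_i\, uUu^{-1}\, g_i^{-1} = g_i U g_i^{-1},
\]
using that $z$ is central (so $zUz^{-1}=U$) and that $u \in U$ normalizes $U$. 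Hence $\{gUg^{-1} : g \in G\} = \{g_i U g_i^{-1} : 1 \le i \le n\}$ is finite.

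It follows that the normal core $N = \bigcap_{g \in G} gUg^{-1} = \bigcap_{i=1}^n g_i U g_i^{-1}$ is a finite intersection of compact open subgroups, hence a compact open normal subgroup of $G$. Since $N$ is compact it is locally elliptic, and being closed and normal it is contained in $\LF(G)$ by the maximality in the definition of the LE-radical (\S\ref{sec:LE}). As $N$ is open, $\LF(G)$ is open, completing the proof. I do not expect any serious obstacle here: the only point requiring care is the finiteness of the conjugacy class of $U$, and this is precisely what the cocompactness of the center delivers through the displayed computation; note in particular that no compact generation hypothesis on $G$ is needed.
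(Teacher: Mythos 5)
Your proof is correct, and it takes a genuinely different route from the paper, which proves this lemma by simply citing \cite{CaMo_amenis}*{Th.~2.4}. Your argument is elementary and self-contained: the only inputs are van Dantzig's theorem and the observation that $UZ$ is an open (hence closed), cocompact, and therefore finite-index subgroup which normalizes $U$ (as $Z$ centralizes $U$ and $U$ normalizes itself), so that $U$ has finitely many conjugates and its normal core $N=\bigcap_{g\in G}gUg^{-1}$ is a compact open normal subgroup of $G$; Platonov's maximality property of the LE-radical then finishes the argument, and indeed no compact generation is used anywhere, matching the hypotheses of the lemma. Note that your intermediate conclusion is strictly stronger than the statement being proved: in general an open LE-radical does not yield a compact open normal subgroup. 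For instance, the group $G$ of Theorem~\ref{theo:example} is an ascending union of compact open subgroups, hence equals its own LE-radical, which is therefore open; yet it cannot have a compact open normal subgroup $K$, since then $G/K$ would be discrete, so $G$ would have property (M) by Proposition~\ref{prop:discrete}(i) combined with Lemma~\ref{lem:CompactKernel}, contradicting Theorem~\ref{theo:example}. So your approach buys a transparent first-principles proof of a sharper structural fact --- a \tdlc group with cocompact center is compact-by-discrete --- whereas the paper's citation buys brevity and defers the work to an external reference.
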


\begin{proof}
See \cite{CaMo_amenis}*{Th.~2.4}.
\end{proof}

\begin{lem}\label{lem:FGpro-p}
Let $G$ be a \tdlc group and $J = J_1 \times \dots \times J_m$ be a direct product of non-virtually abelian just-infinite pro-$p$ groups for some prime $p$. Let $\varphi \colon G \to J$ be a continuous homomorphism with dense image.

Then there exists a (possibly empty) subset $I \subset  \{1, \dots , m\}$ and a compact open subgroup $U \leq G$ such that $\varphi(U)$ is an open subgroup of the subproduct $\prod_{i \in I} J_i$.

In particular, if $J$ is just-infinite (i.e. if $m=1$), then either $\Ker(\varphi)$ is open or $\varphi$ maps each open subgroup of $G$ to an open subgroup of $J$.
\end{lem}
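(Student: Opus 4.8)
The plan is to fix a compact open subgroup $U\le G$ (which exists since $G$ is \tdlc), to analyse the closed subgroup $\varphi(U)\le J$, and to produce the desired pair $(I,U)$ by combining the just-infiniteness of the factors with the density of $\varphi(G)$. The basic mechanism is that density upgrades $\varphi(G)$-invariance to $J$-invariance: if $P\le J$ is closed and normalised by $\varphi(G)$, then it is normal in $\overline{\varphi(G)}=J$. Applying this to the $\varphi(G)$-core $N=\bigcap_{g\in G}\varphi(g)\varphi(U)\varphi(g)^{-1}$ of $\varphi(U)$ produces a closed normal subgroup of $J$ contained in $\varphi(U)$. First I would record the structure of closed normal subgroups of a product of just-infinite groups: for $N\trianglelefteq J$ closed and each $i$, the inclusion $[\pi_i(N),J_i]\le N\cap J_i$, together with $\pi_i(N)\trianglelefteq J_i$ and the non-virtual-abelianness of $J_i$, forces $N\cap J_i$ to be open whenever $\pi_i(N)\ne\{1\}$; hence $N$ is open in the subproduct $\prod_{i\in T}J_i$ on its support $T=\{i:\pi_i(N)\ne\{1\}\}$. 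Thus, if I can arrange that $\varphi(U)$ has trivial projection to every factor outside $T$, then $\varphi(U)$ is squeezed between the open subgroup $N$ and $\prod_{i\in T}J_i$, so it is open there and $I:=T$ works.

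The reduction to that situation I would carry out by induction on $m$. For each $i$ write $\psi_i=\pi_i\circ\varphi\colon G\to J_i$, which has dense image. If some $\Ker(\psi_i)$ is open, pick a compact open $U_i$ with $\psi_i(U_i)=\{1\}$, apply the inductive hypothesis to the projection $\pi_{\hat\imath}\circ\varphi\colon G\to\prod_{j\ne i}J_j$ (still of dense image) to obtain $(I,U)$ with $I\subseteq\{1,\dots,m\}\setminus\{i\}$, and then replace $U$ by the compact open subgroup $U\cap U_i$. A finite-index juggling (using that open subgroups of a compact group have finite index) shows that $\varphi(U\cap U_i)$ is still open in $\prod_{j\in I}J_j$ while now having trivial $i$-th coordinate, so nothing is lost. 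This disposes of every factor on which $\varphi$ has open kernel and leaves the core case, where $\Ker(\psi_i)$ is non-open for all $i$.

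In the core case I would first establish the one-factor dichotomy, which is exactly the \emph{in particular} statement: if $J_i$ is just-infinite pro-$p$ and $\Ker(\psi_i)$ is non-open, then $\psi_i$ sends every compact open subgroup onto an open subgroup of $J_i$. One half is easy: the core of $\psi_i(U)$ is normal in $J_i$, hence open or trivial, and if it is open for one $U$ then $\psi_i(U)$ is open, whence $\psi_i$ maps all compact open subgroups to open subgroups by the finite-index argument. The content is to exclude the possibility that every $\psi_i(U)$ has infinite index while $\Ker(\psi_i)$ is non-open. Here I would use that a just-infinite pro-$p$ group is topologically finitely generated (its Frattini subgroup, being a proper closed characteristic subgroup, is open) and that open subgroups of $J_i$ are again just-infinite, to run a compact-generation argument confining the topological generators to the image of a single compact open subgroup.

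The final and hardest step, in the core case, is to rule out \emph{diagonal linking}. Once the one-factor dichotomy makes every projection $\pi_i(\varphi(U))$ open, $\varphi(U)$ may still be a proper graph, say the graph of an isomorphism between open subgroups of $J_1$ and $J_2$; such a graph has trivial core, the squeezing argument fails, and no admissible $I$ exists, so this is precisely the obstruction one must kill, and I expect it to be the main obstacle. The plan to overcome it exploits density: conjugating a linking graph by $\varphi(g)$ yields another linking graph, and intersecting the two corresponding compact open subgroups and invoking the one-factor dichotomy forces the two linking isomorphisms to agree on an open subgroup, for \emph{every} $g\in G$. Letting $g$ range over the dense subgroup $\varphi(G)$ would confine $\varphi(G)$ to the stabiliser of the graph up to open subgroups; but taking $g$ with trivial first coordinate would make an open subgroup of $J_2$ centralised by all of $J_2$, contradicting the fact that in a non-virtually-abelian just-infinite group the centraliser of an open subgroup is non-open. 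Hence no two surviving factors are linked, $\varphi(U)$ is open in the full product over the surviving indices, and $I$ is that index set. Combining the two cases through the induction completes the proof, and specialising to $m=1$ yields the stated dichotomy for $\Ker(\varphi)$.
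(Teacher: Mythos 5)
Your reduction scheme and your structural analysis of closed normal subgroups of $J$ are correct: the observation that $[\pi_i(N),J_i]\le N\cap J_i$ for a closed normal $N\trianglelefteq J$, combined with triviality of open centralizers in non-virtually abelian just-infinite groups, does show that $N$ is open in the subproduct over its support, and this is a nice self-contained substitute for the paper's citation of \cite{CapStu}*{Lem.~3.6}. The fatal problem is that your ``core case'' --- the one-factor dichotomy, which is exactly the \emph{in particular} statement and the heart of the lemma --- is never actually proved. You reduce it to excluding the configuration ``$\Ker(\psi_i)$ non-open while every $\psi_i(U)$ has infinite index,'' and at that point you offer only a one-sentence plan resting on two claims: that $J_i$ is topologically finitely generated (true), and that open subgroups of $J_i$ are again just-infinite (false under the stated hypotheses: just-infinite does not imply hereditarily just-infinite; a just-infinite branch pro-$p$ group, such as the closure of the Gupta--Sidki $p$-group acting on the rooted $p$-ary tree, is non-virtually abelian and just-infinite, yet has open subgroups splitting as direct products). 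More importantly, even granting hereditary just-infiniteness, no ``compact-generation argument'' is actually given, and the difficulty it must overcome is precisely the one your method cannot see: one needs a \emph{nontrivial} compact subgroup of $J_i$ normalized by a dense subgroup, whereas your core $N=\bigcap_{g}\varphi(g)\varphi(U)\varphi(g)^{-1}$, which is normal for free, is exactly the object that may be trivial in the bad configuration. This is where the paper invokes genuine structure theory: it passes to $H=G/\Ker(\varphi)$, notes that $H$ is residually discrete because it injects continuously into the profinite group $J$, forms a compactly generated open subgroup $H_1$ containing a compact open subgroup together with preimages of topological generators of $J$, and applies \cite{CaMo}*{Cor.~4.1} to produce a compact open subgroup $V$ that is \emph{normal} in $H_1$, so that $\varphi(V)$ is automatically normal in $J$. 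Some substitute for that input is unavoidable, and your proposal has none.

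There is a second, independent hole in your anti-linking step even if the dichotomy is granted. Your contradiction requires an element $g$ with trivial first coordinate, i.e.\ $g\in\Ker(\psi_1)$ with $\psi_2(g)\neq 1$. But $\psi_2(\Ker(\psi_1))$ may well be trivial: the case $\Ker(\psi_1)=\Ker(\psi_2)$, i.e.\ a dense graph of an abstract isomorphism, is the most natural way linking would arise, and there your argument produces nothing (a limiting argument with $\psi_1(g)\to 1$ fails because the open subgroups $B_g$ on which the two linking isomorphisms agree may shrink with $g$). This residual case can be closed --- once the dichotomy makes $\psi_i(U)$ open, $\psi_i(G)$ is open and dense, hence equal to $J_i$, so the linking isomorphism extends to an abstract isomorphism $J_1\to J_2$ that is continuous on an open subgroup, hence continuous, hence has closed graph, contradicting density of $\varphi(G)$ --- but that is an argument you would have to supply, not a detail. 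As it stands, the proposal correctly isolates the two genuine difficulties but resolves neither.
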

\begin{proof}
Set $H = G/\Ker(\varphi)$. The continuous homomorphism $H \to J$ induced by $\varphi$ is again denoted by $\varphi$.

Since every just-infinite pro-$p$ group is finitely generated, so is $J$, so that there exist $h_1, \dots, h_n \in H$ such that $\varphi(h_1), \dots , \varphi(h_n)$ generate a dense subgroup of $J$.

Since $H$ continuously injects in $J$, it is residually finite, hence residually discrete. Let $W \leq H$ be a compact open subgroup and $H_1 =  {\langle \{h_1, \dots, h_n\} \cup W\rangle}$. Thus $H_1$ is a compactly generated residually discrete open subgroup of $H$. By \cite{CaMo}*{Cor.~4.1}, the group $H_1$ has a compact open normal subgroup, say $V$. Now $\varphi(V)$ is a compact subgroup of $J$ which is normalized by $\varphi(H_1)$, hence by $\overline{\varphi(H_1)} = J$. Since $J$ is a direct product of non-virtually abelian  just-infinite groups, it follows that $\varphi(V)$ is a open subgroup of a subproduct $\prod_{i \in I} J_i$ for some $I \subset \{1, \dots, m\}$ (see \cite{CapStu}*{Lem.~3.6}). We finish by defining $U$ as a compact open subgroup of $G$ whose image in $H$ is contained in $V$.
\end{proof}

\begin{proof}[Proof of Theorem~\ref{thm:StructureAmenableLinear}]
(i) Follows from \cite{CaMo_amenis}*{Prop.~2.2} and Furstenberg's theorem ensuring that a connected Lie group is amenable if and only if it is solvable-by-compact.

\medskip \noindent
(ii) If $k$ is discrete, then so is $G$ and every subgroup is open.  We assume henceforth the $k$ is non-discrete. Thus $k$ is a non-Archimedean local field (see \cite{Weil}*{Ch.~1}). We let $p$ denote its residue characteristic.

Let $\varphi \colon G \to \GL_d(k)$ be a continuous injective homomorphism.
The group $\GL_d(k)$ embeds as a closed subgroup in $\SL_{d+1}(k)$. Therefore, Theorem~\ref{thm:GuivarchRemy} ensures that the $p$-radical of $\overline{\varphi(G)}$ is open. Therefore, upon replacing $G$ by an open normal subgroup, we may assume   that $\overline{\varphi(G)}$ is locally $p$-elliptic.

\bigskip
Assume now that the characteristic of $k$ is zero. Then $G$ is a $p$-adic Lie group. By Proposition~\ref{prop:Lie}, there is an open normal subgroup $M \leq G$ such that $\varphi(\overline{[M, M]})$ is closed. In particular it is locally $p$-elliptic, hence contained in $\LFp(M)$. Since $M/\overline{[M, M]}$ is abelian and $p$-adic Lie, it has an open normal pro-$p$ subgroup. This implies  by Proposition~\ref{prop:Locally-p-elliptic} that the $p$-radical $\LFp(M)$ is open in $M$, hence in $G$. Since $M$ is normal and $\LFp(M)
$ is characteristic, we have $\LFp(M) \leq \LFp(G)$, so that the latter is open.

\bigskip
We now assume that the characteristic of $k$ is positive (hence equal to $p$). We distinguish two cases, according to whether $Q = \overline{\varphi(G)}$ is compact or not.

Assume first that $Q$ is compact. We consider the closed subgroups $U \leq R \leq S \leq Q_1$ of $Q$ afforded by Theorem~\ref{thm:Pink}. 

Set $U_1 = \varphi^{-1}(U)$, $R_1 = \varphi^{-1}(R)$, $S_1 = \varphi^{-1}(S)$ and $G_1 = \varphi^{-1}(Q_1)$.

\begin{claim*}
$S_1/R_1$ is compact.
\end{claim*}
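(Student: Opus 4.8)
The plan is to study the continuous homomorphism that $\varphi$ induces on $S_1/R_1$, and to use amenability of $G$ to force its image to be trivial; compactness is then immediate. First I would record the elementary topological facts. Since $Q_1$ is open in $Q$ and $\varphi(G)$ is dense in $Q$, the intersection $\varphi(G_1)=Q_1\cap\varphi(G)$ is dense in $Q_1$. From $R\leq S$ we get $R_1\leq S_1$, and since $\varphi^{-1}(R)\cap S_1=R_1$, the map $\varphi$ induces a \emph{continuous injective} homomorphism $\bar\varphi\colon S_1/R_1\to S/R$. By Theorem~\ref{thm:Pink}(iv) the target $S/R=J_1\times\dots\times J_m$ is a compact group, so it suffices to show that $\bar\varphi(S_1/R_1)$ is trivial, and I will do this by showing that its closure is trivial.

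Next I would exploit normality together with amenability. Write $\pi\colon Q_1\to Q_1/R$ and $\Phi=\pi\circ\varphi\colon G_1\to Q_1/R$, so that $\Phi(G_1)$ is dense in $Q_1/R$ and $\Psi:=\Phi|_{S_1}$ has image $\Psi(S_1)=\bar\varphi(S_1/R_1)$. Because $S\trianglelefteq Q_1$ (Theorem~\ref{thm:Pink}(i)), the subgroup $S_1=\varphi^{-1}(S)$ is normal in $G_1$, hence $\Phi(G_1)$ normalizes $\Psi(S_1)$; passing to closures, $L:=\overline{\Psi(S_1)}$ is normalized by $\overline{\Phi(G_1)}=Q_1/R$, so $L$ is a closed normal subgroup of $S/R$. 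On the other hand $S_1/R_1$ is a subquotient of the amenable group $G$, hence amenable, and therefore so is each of its continuous homomorphic images.

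Then I would annihilate $L$ one factor at a time. Fix $i$ and let $p_i\colon S/R\to J_i$ be the projection; then $p_i(L)$ is a closed normal subgroup of $J_i$ in which $p_i\Psi(S_1)$ is dense. Since $J_i$ is just-infinite and infinite (being non-virtually-abelian), $p_i(L)$ is either trivial or open. Suppose it were open. As $J_i$ is a compact open subgroup of a simple non-abelian algebraic group $\mathbf{G}_i$ over a local field, $p_i(L)$ is Zariski-dense in $\mathbf{G}_i$, and hence so is $p_i\Psi(S_1)$. Choosing finitely many of its elements generating a Zariski-dense subgroup $\Delta$, the group $\Delta$ is finitely generated, linear and amenable (a subgroup of an amenable group), so by the Tits alternative it is virtually solvable; but a virtually solvable subgroup cannot be Zariski-dense in the simple non-abelian group $\mathbf{G}_i$, a contradiction. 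Hence $p_i(L)$ is trivial for every $i$, so $L$ is trivial, and therefore $S_1/R_1$ is trivial; in particular it is compact, as claimed.

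I expect the main obstacle to be the bookkeeping that makes these three steps mesh: checking that $\bar\varphi$ is well defined and injective with exactly the kernel $R_1$, verifying that $L$ is genuinely \emph{normal} so that the just-infinite dichotomy applies to each $p_i(L)$, and—most delicately—passing to a finitely generated Zariski-dense subgroup so that the Tits alternative can be invoked legitimately, in particular in positive characteristic where the statement is restricted to finitely generated linear groups.
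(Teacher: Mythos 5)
There is a genuine gap, and it sits exactly at the step you flagged as ``most delicate'': the application of the Tits alternative to $\Delta$. You justify the amenability of $\Delta$ by saying it is ``a subgroup of an amenable group''. But the amenability in play is \emph{topological} amenability of the locally compact group $G$ (and of its closed subgroups and Hausdorff quotients, such as $S_1/R_1$); it does \emph{not} pass to abstract, non-closed subgroups of continuous images. Your $\Delta$ is a finitely generated subgroup of $p_i\Psi(S_1)$, which is merely the (typically dense, non-closed) image of $S_1/R_1$ inside the compact group $J_i$; nothing forces $\Delta$ to be amenable as a discrete group. Compact groups are the standard illustration: they are amenable as topological groups, yet a finitely generated Zariski-dense subgroup of a compact open subgroup of a simple algebraic group contains a free subgroup by the very Tits alternative you invoke. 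In fact your argument proves too much, and its conclusion is false: take $G$ itself to be a pro-$p$ compact open subgroup of $\SL_2\big(\FF_p(\!(t)\!)\big)$ with $\varphi$ the inclusion. Then $G$ is compact, hence amenable, $Q=G$, the radical parts $U,R$ are trivial, $S$ is open in $Q$, and $S_1/R_1\cong S$ is compact but very far from trivial. Note also that the paper itself shows the opposite of triviality: each factor $J_i$ is virtually contained in $\overline{\varphi([G_1,G_1])}R/R\subseteq L$, so $L$ is an \emph{open} subgroup of $S/R$, not the trivial one.

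This is precisely why the paper's proof takes a more roundabout route through Lemma~\ref{lem:FGpro-p}. That lemma yields a dichotomy for the map $\overline{[G_1,G_1]}R_1/R_1 \to J_i$: either the kernel is \emph{open}, or the map sends open subgroups to open subgroups. Only in the open-kernel case is the image a quotient of a \emph{discrete} group, hence amenable as an abstract group, and only then can the Tits alternative be applied legitimately to derive a contradiction with $J_i$ being non-virtually-abelian just-infinite. The surviving horn of the dichotomy is the second one, and it gives compactness (not triviality): the image of a compact open subgroup of $S_1/R_1$ is open, hence of finite index, in the compact group $S/R$, and since $\bar\varphi\colon S_1/R_1\to S/R$ is injective this forces $S_1/R_1$ to be a finite union of compact cosets. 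Your reduction steps (well-definedness and injectivity of $\bar\varphi$, normality of $L$ in $S/R$, the just-infinite dichotomy for $p_i(L)$, and the passage to a finitely generated Zariski-dense subgroup) are all fine; it is the single word ``amenable'' applied to $\Delta$ that cannot be repaired, and with it the whole strategy of proving triviality collapses.
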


Since $Q_1/S$ is abelian, we have $[G_1, G_1] \leq S_1$, so that $\overline{\varphi([G_1, G_1])}R/R$ is a closed normal subgroup of $Q_1/R$ contained in $S/R$. By Theorem~\ref{thm:Pink}, the  group $S/R$ is a direct product $J_1 \times \dots \times J_m$ of finitely many hereditarily just-infinite groups. If a factor $J_i$   is not virtually contained in $\overline{\varphi([G_1, G_1])}R/R$, then it must intersect it trivially (since $J_i$ is just-infinite), and thus inject in the quotient $Q_1/\overline{\varphi([G_1, G_1])}$, which is abelian since $\varphi(G_1)$ is dense in $Q_1$. Since no factor $J_i$ is abelian, it follows that each $J_i$ is virtually contained in $\overline{\varphi([G_1, G_1])}R/R$.

We now consider the composite homomorphism
$$\overline{[G_1, G_1]}R_1/R_1 \to S/R \cong J_1 \times \dots \times J_m.$$
We have just seen that the closure of its image is an open subgroup of $J_1 \times \dots \times J_m$. Upon passing to an open subgroup of finite index, we may assume that the closure of its image is itself a direct product of non-virtually abelian just-infinite pro-$p$ groups. We then invoke Lemma~\ref{lem:FGpro-p} and denote by $I \subset \{1, \dots, m\}$ the afforded index set. If there exists $i \in \{1, \dots,m\} \setminus I$, then the kernel of the composite map  $\overline{[G_1, G_1]}R_1/R_1 \to S/R \cong J_1 \times \dots \times J_m \to J_i$ is open. Since $G$ is amenable, it then follows that  $J_i$ has an open subgroup possessing a dense subgroup which is amenable as an abstract group. Since $J_i$ is finitely generated pro-$p$, every dense subgroup contains a finitely generated dense subgroup. Since $J_i$ is linear (see Theorem~\ref{thm:Pink}), each finitely generated   amenable subgroup is virtually solvable by the Tits alternative \cite{TitsAlt}. We conclude that $J_i$ must be virtually solvable, which contradicts the fact that it is just-infinite but not virtually abelian. This shows that $I = \{1, \dots, m\}$. In view of  Lemma~\ref{lem:FGpro-p}, this means that the  homomorphism $\overline{[G_1, G_1]}R_1/R_1 \to S/R \cong J_1 \times \dots \times J_m$ maps open subgroups to open subgroups.

Let $V$ be a compact open subgroup of $S_1$. Thus $VR_1/R_1$ is open in $S_1/R_1$, so it has finite image in $S/R$ by the conclusion of the previous paragraph. Therefore it has finite index in $S_1/R_1$, so that  $S_1/R_1$ is compact. The claim stands proven.

\medskip

Since $Q_1/S$ is abelian, so is $G_1/S_1$. Since $G/G_1$ is finite, it follows that $G/S_1$ is abelian-by-finite. We may thus find an open normal subgroup $G_2$ of $G$ with  $S_1 \leq G_2 \leq G_1$ such that $G_2/S_1$ is compact. By the claim, the group $G_2/R_1$ is compact.

By Theorem~\ref{thm:Pink}, the group $U$ is locally $p$-elliptic as a discrete group. Therefore, so is $ U_1$. In particular $U_1 \leq \LFp(G)$.

By Theorem~\ref{thm:Pink}, we also have $[Q, R] \leq U$. Therefore $[G_2, R_1] \leq U_1$. Thus the quotient $G_2/U_1$ has a cocompact center. By Lemma~\ref{lem:discrete-by-compact} it has an open locally elliptic radical. Since $Q_1$ is pro-$p$, every compact open subgroup of $G_1$ is pro-$p$, so that the locally elliptic radical of $G_2/U_1$ coincides with its $p$-radical. This shows that $\LFp(G_2/U_1)$ is open in $G_2/U_1$. Since $U_1 \leq \LFp(G_2)$, we infer that $\LFp(G_2)$ is open in $G_2$. Since $G_2$ is open and normal in $G$, we have $\LFp(G_2) \leq \LFp(G)$ and $ \LFp(G)$ is indeed open. This concludes the proof in case $Q$ is compact.

\medskip
We   assume finally that $Q$ is non-compact. By Theorem~\ref{thm:GuivarchRemy}, the group $Q$ is contained in a parabolic subgroup $P$ of $\SL_{d+1}(k)$ so that the closure of the image of $Q$ in the Levi factor is compact. Since the unipotent radical of $P$ is locally $p$-elliptic as a discrete group, so is its pre-image under $\varphi$. Therefore, we may replace  the given representation $\varphi$ by its composite with the projection of $P$ onto its Levi factor. In this way, we are reduced to the case where the closure of the image of $\varphi$ is compact. That case has already been taken care of.
\end{proof}

\subsection{Amenable linear groups have property (M)}\label{sec:AmeanebleLinear}

In order to prove Theorem~\ref{theo:LinearAmenable}, we first establish the case where $n=1$, which can be restated as follows. 

\begin{thm}\label{thm:AmenableLinear->M}
Every amenable linear locally compact  group has property (M).
\end{thm}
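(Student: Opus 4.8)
The plan is to combine the algebraic description of amenable linear groups given by Theorem~\ref{thm:StructureAmenableLinear} with the cocompactness tools assembled earlier, splitting into the two cases of that theorem. Let $G$ be amenable and linear over the locally compact field $k$, and let $H\leq G$ be a cofinite subgroup. If $k$ is connected, then by Theorem~\ref{thm:StructureAmenableLinear}(i) the group $G$ is a Lie group, so its identity component $G^\circ$ is open. As recorded after Theorem~\ref{theo:reduction}, whenever $G^\circ$ is open the quotient $G/G^\circ$ is discrete and every cofinite subgroup is automatically cocompact; hence $G$ has property (M) in this case.

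If $k$ is disconnected there are two possibilities. If $k$ is discrete, then $G$ is discrete and property (M) holds by Proposition~\ref{prop:discrete}(i). Otherwise $k$ is a non-Archimedean local field, so that $\GL_d(k)$ is a \tdlc group and, the faithful representation being continuous and injective, $G$ is itself \tdlc. Theorem~\ref{thm:StructureAmenableLinear}(ii) then furnishes a prime $p$ for which $O:=\LFp(G)$ is open and normal in $G$; by definition $O$ is locally $p$-elliptic. Since $O$ is open and normal, Proposition~\ref{prop:discrete}(iii) reduces the problem to showing that $O$ itself has property (M).

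To prove that $O$ has property (M), let $H\leq O$ be cofinite, with $O$-invariant probability measure $m$ on $O/H$, and fix a compact open subgroup $O_1\leq O$. The $O_1$-orbits in $O/H$ are compact open, of positive $m$-measure (as $m$ has full support), and they partition $O/H$; since $m$ is finite there are only countably many of them. Picking representatives $g_i\in O$ and setting $O'=\langle O_1, g_1,g_2,\dots\rangle$ yields an open subgroup which is $\sigma$-compact (being generated by a compact open subgroup together with countably many elements) and which acts transitively on $O/H$, so that $O'H=O$ and $O/H\cong O'/(O'\cap H)$; in particular $O'\cap H$ is cofinite in $O'$. As an open subgroup of $O$, the group $O'$ is again locally $p$-elliptic, and it is $\sigma$-compact, so Lemma~\ref{lem:Locally-p-elliptic->bddIndex} shows that $O'$ has property (M). Hence $O'\cap H$ is cocompact in $O'$, i.e.\ $O'/(O'\cap H)\cong O/H$ is compact, as desired.

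The conceptual content is carried entirely by Theorem~\ref{thm:StructureAmenableLinear}, which I take as given; granting it, the only genuine technical point is the passage to a $\sigma$-compact group needed to invoke Lemma~\ref{lem:Locally-p-elliptic->bddIndex}, since $\LFp(G)$ need not be $\sigma$-compact a priori. The orbit-counting argument in the preceding paragraph is precisely what removes this obstruction, by replacing the possibly unwieldy group $O$ with a $\sigma$-compact open subgroup $O'$ that is transitive on $O/H$.
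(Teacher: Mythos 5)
Your proof is correct, and it follows the same two-case skeleton as the paper's own proof: for connected $k$, Theorem~\ref{thm:StructureAmenableLinear}(i) combined with Theorem~\ref{theo:reduction}; for disconnected $k$, openness of $\LFp(G)$ from Theorem~\ref{thm:StructureAmenableLinear}(ii), then Lemma~\ref{lem:Locally-p-elliptic->bddIndex} and Proposition~\ref{prop:discrete}(iii). (In the connected case the paper first deduces property (M) for the solvable-by-compact group $G^\circ$ and then invokes Proposition~\ref{prop:discrete}(iii), while you apply Theorem~\ref{theo:reduction} directly to the amenable group $G$; the two variants are interchangeable.) The one place where you genuinely depart from the paper is the $\sigma$-compactness reduction, and this is a worthwhile addition rather than a detour. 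Lemma~\ref{lem:Locally-p-elliptic->bddIndex} is stated, and proved via Proposition~\ref{prop:UniformBound}, only for $\sigma$-compact groups, whereas Theorem~\ref{thm:AmenableLinear->M} carries no such hypothesis; since linearity in the paper's sense does not require the representation to be a homeomorphism onto its image, $\LFp(G)$ need not be $\sigma$-compact. For instance, the direct product $D \times \ZZ_p$, where $D$ is an uncountable subgroup of the additive group of $k=\FF_p(\!(t)\!)$ endowed with the \emph{discrete} topology, admits a continuous faithful representation into $\GL_3(k)$ (embed $D$ unipotently in $\GL_2(k)$ and $\ZZ_p$ as the closed subgroup $\overline{\langle 1+t\rangle}$ of $k^*$); it is amenable, non-discrete, equal to its own $p$-radical, and not $\sigma$-compact. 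The paper applies the lemma to $\LFp(G)$ without commenting on this point, so your passage to the $\sigma$-compact open subgroup $O'=\langle O_1,g_1,g_2,\dots\rangle$, transitive on $O/H$, is exactly the patch that makes the argument work in the stated generality. One small point in that step deserves to be made explicit: the countability of the set of $O_1$-orbits rests on every orbit having positive measure, i.e.\ on $m$ having full support; this follows from inner regularity of $m$ (a compact set of positive measure is covered by finitely many translates of any nonempty open set), so your parenthetical claim is justified.
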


\begin{proof}
Let $G$ be an amenable locally compact  group which is linear over a locally compact field $k$.

If $k$ is connected, then $G^\circ$ is open and solvable-by-compact in view of Theorem~\ref{thm:StructureAmenableLinear}. It then follows from Theorem~\ref{theo:reduction} that $G^\circ$ has (M). Therefore $G$ has (M) by Proposition~\ref{prop:discrete}(iii).

If $k$ is disconnected, then there is a prime $p$ such that $\LFp(G)$ is open in $G$ by Theorem~\ref{thm:StructureAmenableLinear}. It then follows from Lemma~\ref{lem:Locally-p-elliptic->bddIndex} that $\LFp(G)$ has (M), hence so does  $G$ by Proposition~\ref{prop:discrete}(iii).
\end{proof}

As a corollary, we recover (a compact extension of) Mostow's theorem.

\begin{cor}\label{cor:Mostow}
Every amenable Lie group has property (M).
\end{cor}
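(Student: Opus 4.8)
The plan is to combine Theorem~\ref{thm:AmenableLinear->M} with the stability of property (M) under central extensions (Proposition~\ref{prop:CentralExt}) and under passage to open normal subgroups (Proposition~\ref{prop:discrete}(iii)). The point to keep in mind is that an amenable Lie group need not itself be linear: for example, the quotient of the real Heisenberg group by an infinite cyclic central subgroup is a connected nilpotent (hence amenable) Lie group whose derived subgroup is a compact torus, and such a group carries no faithful finite-dimensional representation. So Theorem~\ref{thm:AmenableLinear->M} cannot be applied to $G$ directly; the idea is to apply it after dividing out the center, where linearity is automatic via the adjoint representation, and then to recover $G$ as a central extension.

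First I would reduce to the connected case. For any Lie group $G$ the identity component $G^\circ$ is open and normal, and it is amenable as a closed subgroup of the amenable group $G$. By Proposition~\ref{prop:discrete}(iii) it therefore suffices to prove that $G^\circ$ has property (M), so we may assume $G$ connected.

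Now let $G$ be a connected amenable Lie group with Lie algebra $\mathfrak g$, and consider the adjoint representation $\Ad \colon G \to \GL(\mathfrak g) \cong \GL_d(\RR)$, where $d = \dim \mathfrak g$. For a connected Lie group the kernel of $\Ad$ is exactly the center $Z(G)$, so $\Ad$ induces a continuous injective homomorphism $G/Z(G) \hookrightarrow \GL_d(\RR)$. Thus $G/Z(G)$ is an amenable locally compact group that is linear over the connected field $\RR$, and Theorem~\ref{thm:AmenableLinear->M} shows that it has property (M). Since $Z(G)$ is a closed central subgroup of $G$ whose quotient $G/Z(G)$ has (M), Proposition~\ref{prop:CentralExt} yields that $G$ itself has property (M), completing the argument.

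I do not anticipate a serious technical obstacle once the earlier results are in hand; the one genuinely load-bearing observation is the non-linearity phenomenon highlighted above, which is exactly what dictates the route through $G/Z(G)$ rather than a direct appeal to the linear case. This is also the sense in which the statement is ``a compact extension of'' Mostow's original theorem: by Furstenberg's theorem a connected amenable Lie group is solvable-by-compact, so the class treated here strictly contains the connected solvable Lie groups of Mostow's setting. As an alternative, one could bypass linearity entirely by invoking Theorem~\ref{theo:reduction}, using that $G^\circ$ is open so that $G/G^\circ$ is discrete; but the central-extension route above is the one that fits naturally as a corollary of the linear structure results just established.
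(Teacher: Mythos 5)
Your proof is correct and follows exactly the paper's own argument: reduce to the connected case via Proposition~\ref{prop:discrete}(iii), observe that the adjoint representation exhibits $G^\circ$ as central-by-linear, apply Theorem~\ref{thm:AmenableLinear->M} to $G^\circ/Z(G^\circ)$, and conclude with Proposition~\ref{prop:CentralExt}. The additional remarks on why a direct appeal to linearity can fail are apt but not needed for the logic.
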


\begin{proof}
Let $G$ be an amenable Lie group. Then $G^\circ$ is open and it suffices by Proposition~\ref{prop:discrete}(iii) to prove the assertion for the connected group $G^\circ$. The kernel of the adjoint action of a connected Lie group on its Lie algebra is the center of the Lie group. Thus every connected Lie group is central-by-linear. Since property (M) is stable under central extensions (see Proposition~\ref{prop:CentralExt}), the required conclusion follows from Theorem~\ref{thm:AmenableLinear->M}.
\end{proof}

We are now able to complete the proof Theorem~\ref{theo:LinearAmenable}.

\begin{proof}[Proof of Theorem~\ref{theo:LinearAmenable}]
We proceed by induction on the number $n$ of direct factors of $G$. The base case of the induction is afforded by Theorem~\ref{thm:AmenableLinear->M}. We assume henceforth that $n>1$. If there is $i \in \{1, \dots, n\}$ such that $k_i$ is totally disconnnected, then $G_i$ is totally disconnected, hence $G$ has property (M) by Proposition~\ref{prop:DirectProd} in view of the induction hypothesis. Otherwise, each $k_i$ is connected. Since every connected locally compact field continuously embeds in $\mathbf C$, we infer that $G$ is  linear over $\mathbf C$. Hence  $G$ has property (M)  by invoking again Theorem~\ref{thm:AmenableLinear->M}. 
\end{proof}

\subsubsection{Solvable Lie groups --- Mostow's theorem revisited}\label{sec:Lie}

In this final subsection we sketch an alternative proof for Mostow's original theorem:

\begin{thm}[\cite{Mostow}]\label{thm:linear}
Every solvable Lie group has property $(M)$.
\end{thm}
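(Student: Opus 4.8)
The plan is to deduce Theorem~\ref{thm:linear} as a special case of the machinery already developed, giving a proof that is genuinely shorter than Mostow's original structural argument. A solvable Lie group $G$ is in particular an amenable Lie group, so the cleanest route is to invoke Corollary~\ref{cor:Mostow}, which already asserts that every amenable Lie group has property $(M)$. Thus the ``alternative proof'' amounts to tracing through how Corollary~\ref{cor:Mostow} specializes when $G$ is solvable, rather than producing anything logically new. The point of this subsection is pedagogical: to exhibit the linearity-based argument concretely in the solvable case.

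First I would reduce to the identity component. Since $G$ is a Lie group, $G^\circ$ is open, so by Proposition~\ref{prop:discrete}(iii) it suffices to prove that the connected solvable Lie group $G^\circ$ has property $(M)$. Second, I would linearize. The adjoint representation $\Ad\colon G^\circ \to \GL(\mathfrak g)$ has kernel equal to the center $Z(G^\circ)$, so $G^\circ$ is a central extension $Z(G^\circ) \to G^\circ \to \Ad(G^\circ)$, where $\Ad(G^\circ) \leq \GL_d(\RR)$ with $d = \dim G$. The image $\Ad(G^\circ)$ is a connected solvable (hence amenable) linear group over the connected field $\RR$. By Theorem~\ref{thm:AmenableLinear->M}, this linear group has property $(M)$. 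Since property $(M)$ is stable under central extensions by Proposition~\ref{prop:CentralExt}, we conclude that $G^\circ$, and therefore $G$, has property $(M)$.

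The one point deserving care is whether $\Ad(G^\circ)$ is genuinely amenable and linear in the sense required by Theorem~\ref{thm:AmenableLinear->M}: we need a \emph{continuous faithful} representation, but here $\Ad$ is continuous and we pass to its image, so faithfulness on the image is automatic and the representation of $\Ad(G^\circ)$ is its tautological inclusion into $\GL_d(\RR)$. Amenability of $\Ad(G^\circ)$ is clear since it is a continuous image of the solvable group $G^\circ$. Since $\RR$ is connected, part (i) of Theorem~\ref{thm:StructureAmenableLinear} applies and feeds directly into the proof of Theorem~\ref{thm:AmenableLinear->M}. I do not anticipate a genuine obstacle here, as every ingredient is in place; the only mild subtlety is the observation that linearity is not required to have closed image, so one need not worry about $\Ad(G^\circ)$ being closed in $\GL_d(\RR)$. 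This is exactly the flexibility emphasized after the statement of Theorem~\ref{thm:StructureAmenableLinear}, and it is what makes the central-extension reduction clean.
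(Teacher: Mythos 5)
Your deduction is formally valid: a solvable Lie group is an amenable Lie group, so Corollary~\ref{cor:Mostow} applies, and your unwinding of it (reduce to $G^\circ$ via Proposition~\ref{prop:discrete}(iii), linearize by $\Ad$ with kernel the center, apply Theorem~\ref{thm:AmenableLinear->M} to $\Ad(G^\circ)$, lift through the central extension by Proposition~\ref{prop:CentralExt}) is precisely the paper's proof of that corollary. But this is not the route the paper takes for Theorem~\ref{thm:linear}, and for a reason worth internalizing. The stated purpose of this subsection is to sketch an \emph{alternative} proof of Mostow's original theorem, and your citation chain does not deliver one: Theorem~\ref{thm:AmenableLinear->M}, in the Archimedean case, rests on Theorem~\ref{theo:reduction} (Benoist--Quint), and as the introduction explains, the proof of that theorem reduces via Hilbert's fifth problem to connected amenable Lie groups and then argues ``by induction on the derived length of the solvable radical, as Mostow did in his original proof.'' So although there is no formal circularity within the paper's numbered results, as a proof of Mostow's theorem your argument is vacuous in substance: the content of Mostow's theorem sits inside the black box you invoke. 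The paper instead gives a self-contained argument: after passing to finite-index subgroups so that $G'=[G,G]$ is nilpotent, it inducts on the nilpotency class of $G'$; it divides by the center $Z$ of $G'$, uses the inductive hypothesis to replace $G$ by $\overline{HZ}$ (so $HZ$ is dense), observes that $F=H\cap G'$ is then normal in $G$, and passes to $G/F$, where $H$ becomes abelian; it concludes with an elementary lemma that a cofinite abelian subgroup of a Lie group is cocompact, proved by bare hands. What the paper's route buys is exactly independence from Theorem~\ref{theo:reduction} and from the linear-group machinery of Sections~\ref{sec:Pink}--\ref{sec:AmeanebleLinear}; what your route buys is brevity, but at the cost of proving nothing beyond what Corollary~\ref{cor:Mostow} had already asserted.
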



\begin{proof}
Let $G$ be a compactly generated solvable Lie group and $H\le G$ a cofinite subgroup. Up to replacing $G$ and $H$ by finite index subgroups we may assume that the commutator $G' = [G,G]$ is nilpotent. We may argue by induction on the nilpotency degree of $G'$, where the base case when $G'$ is trivial follows from Proposition \ref{prop:nilpotent}.
Let $Z$ be the center of $G'$. By induction $(G/Z)/(\overline{HZ}/Z)$ is compact, hence we may assume that $HZ$ is dense in $G$. It follows that $F=H\cap G'$ is normal in $G$. Dividing by $F$ we are left to prove that $H/F$ is cocompact in $G/F$. Since $H/F$ is abelian, the result follows from the following lemma.
\end{proof}


\begin{lem}
Let $G$ be a  Lie group and $H$ a cofinite abelian subgroup. Then $H$ is cocompact.
\end{lem}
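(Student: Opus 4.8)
The plan is to prove the slightly more general statement that a closed abelian subgroup $H$ of finite covolume in a (second countable) Lie group $G$ is cocompact, by induction on $\dim G$. First I would record two reductions. Since $H$ is abelian, hence amenable, and cofinite, the ambient group $G$ is amenable; and since $HG^\circ$ is open and of finite covolume, the finite invariant measure on the discrete homogeneous space $G/HG^\circ$ forces $HG^\circ$ to have finite index (Proposition~\ref{prop:discrete}), so that $H\cap G^\circ$ is an abelian cofinite subgroup of $G^\circ$ whose cocompactness there yields cocompactness of $H$ in $G$. Thus I may assume $G$ connected. I also record a fact used repeatedly below: any group possessing a closed cofinite subgroup is unimodular, since otherwise the modular homomorphism would produce a finite invariant measure on $\RR$ under a nontrivial translation action.

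Next I would pick a minimal nontrivial connected closed normal subgroup $A\le G$; as $G$ is amenable, $A$ is either compact or a vector group $\cong\RR^k$. Applying the inductive hypothesis to the abelian cofinite subgroup $\overline{HA}/A$ of the lower-dimensional group $G/A$ shows that $\overline{HA}$ is cocompact in $G$, so it suffices to prove $H$ cocompact in $\overline{HA}$; I replace $G$ by $\overline{HA}$ and assume henceforth that $HA$ is dense and $G/A$ is abelian. If $A$ is compact, then $AH$ is closed and $AH/H\cong A/(A\cap H)$ is compact, giving cocompactness at once. If $A\cong\RR^k$, I consider the conjugation representation $\rho\colon G\to\GL(A)$; since $G/A$ is abelian (hence unimodular) and $G$ is unimodular, the modular identity for the normal subgroup $A$ gives $|\det\rho(g)|=1$ for all $g$. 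The fixed subspace $A^{\rho(G)}$ is central in $G$: if it is nonzero I peel it off using the inductive hypothesis for $G/A^{\rho(G)}$ (property of this kind passing through central extensions, as in Proposition~\ref{prop:CentralExt}), while if $A^{\rho(G)}=A$ then $G$ is a central extension of the abelian group $G/A$, hence nilpotent, and Proposition~\ref{prop:nilpotent} finishes. So I may assume $\rho(G)$ has no nonzero fixed vector.

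Now $\rho(G)$ is a connected abelian subgroup of $\GL(A)$ with $|\det|=1$ and no nonzero fixed vector. If $\rho(G)$ is compact, then its kernel $K=C_G(A)$ is a cocompact normal subgroup of strictly smaller dimension; disintegrating the finite invariant measure on $G/H$ over the compact base $G/KH$ shows that $H\cap K$ is cofinite in $K$, so by induction $H\cap K$ is cocompact in $K$, and multiplying a compact fundamental domain for $K$ in $G$ by one for $H\cap K$ in $K$ gives $G=(\text{compact})\cdot H$. The crux, and the step I expect to be the main obstacle, is to rule out a non-compact $\rho(G)$: this is exactly the hyperbolic phenomenon at the heart of Mostow's theorem. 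Here I would first argue that non-compactness together with the absence of fixed vectors forces some $\rho(g_0)$ to have an eigenvalue off the unit circle, since a purely elliptic--unipotent image would either be compact or possess a fixed vector.

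To finish that case I would decompose $A$ into the $\rho(g_0)$-invariant (hence $\rho(G)$-invariant, as $\rho(G)$ is abelian) expanding, contracting, and neutral subspaces $A_+\oplus A_-\oplus A_0$, with $A_+\neq 0$. These are normal in $G$, and since $A_+\oplus A_-\oplus A_0=A$ with $G/A$ abelian, the quotient $\bar G=G/(A_-\oplus A_0)$ has the vector group $A_+$ as a normal subgroup on which $g_0$ acts by strict expansion; thus $|\det(\rho(g_0)|_{A_+})|>1$ while $\bar G/A_+\cong G/A$ is unimodular, so $\Delta_{\bar G}(g_0)>1$ and $\bar G$ is non-unimodular. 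But the image of $H$ is a cofinite subgroup of $\bar G$, contradicting the fact recorded above that a group admitting a cofinite subgroup must be unimodular. Hence $\rho(G)$ is compact after all, and the induction closes. The delicate points to get right are the modular bookkeeping for normal subgroups and the claim that, once fixed vectors are removed, non-compactness really produces an eigenvalue off the unit circle.
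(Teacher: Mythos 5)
Your reductions at the start (to $G$ connected, to $G=\overline{HA}$ with $A$ a minimal connected normal subgroup which is compact or a vector group, and the peeling off of the fixed subspace $A^{\rho(G)}$ via the central-extension argument) are sound and parallel the paper's second proof, and your unimodularity bookkeeping in the ``eigenvalue off the unit circle'' case is correct. But the two steps you yourself flag as the crux are genuinely broken. The spectral dichotomy is false: it is not true that a non-compact abelian subgroup of $\GL(A)$ with $|\det|=1$ and no nonzero fixed vector must contain an element with an eigenvalue off the unit circle. Take $R\in\SO(2)$ an irrational rotation and
$$g=\begin{pmatrix} R & I_2\\ 0 & R\end{pmatrix}\in\GL_4(\RR),\qquad
g^n=\begin{pmatrix} R^n & nR^{n-1}\\ 0 & R^n\end{pmatrix}.$$
Then $\langle g\rangle$ is abelian, $\det g=1$, all eigenvalues of $g$ lie on the unit circle, the fixed space of $g$ is zero (from $Ry=y$ one gets $y=0$, then $Rx=x$ gives $x=0$), yet $\langle g\rangle$ is unbounded, hence not even relatively compact. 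So the mixed elliptic--unipotent configuration falls through your trichotomy, and in that configuration your modular-function contradiction yields nothing, since $|\det(\rho(g)|_W)|=1$ on every invariant subspace $W$. This is exactly the hard case; the paper circumvents it by first making $H\cap N$ normal (using density of $HZ$) and dividing it out, then cutting $A$ down to a minimal $H$-invariant subspace $B\cong\RR$ or $\RR^2$, where boundedness of eigenvalues really does force honest rotations, and deriving a contradiction from the centralizer of an irrational rotation.

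The compact case has a second gap. You claim that ``disintegrating the finite invariant measure on $G/H$ over the compact base $G/KH$ shows that $H\cap K$ is cofinite in $K$.'' But $K=C_G(A)\supseteq A$ and $HA$ is dense, so $\overline{KH}=G$: the ``base'' is a point and the disintegration carries no information. Moreover the implication you want is false as a general principle: a cofinite, even cocompact, subgroup can meet a closed normal subgroup in a subgroup that is not cofinite there. For instance, in $G=(\RR/\ZZ)\times\RR$ let $K=\{(t\bmod 1,\,\alpha t): t\in\RR\}$ with $\alpha$ irrational and $H=\{0\}\times\ZZ$; then $K$ is closed and normal, $G/K$ is compact, $H$ is cocompact, but $H\cap K=\{1\}$ is not cofinite in $K\cong\RR$. (The paper's introduction warns of precisely this phenomenon with its Heisenberg and lamplighter examples.) A smaller defect in the same step: after replacing $G$ by $\overline{HA}$ you no longer know $G$ is connected, so ``$\rho(G)$ is connected'' and ``$K$ has strictly smaller dimension'' are unjustified as stated (if $\rho(G)$ were finite the dimension does not drop); the paper re-applies the connectedness reduction at this point, and your argument needs to do the same.
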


\begin{proof}[First proof --- assuming that $G$ is solvable]
Recall that compactly generated solvable Lie groups are Noetherian and note that every compactly generated abelian Lie group admits a lattice. Thus,
arguing by induction on the dimension and as in the proof above, we may reduce to the case that $G=T\ltimes U$ where $T$ is tori, $U$ a unipotent abelian group and the action of $T$ on $U$ is locally faithful (the kernel is discrete) and irreducible, $H\le G$ is a lattice, $H\cap U=\{1\}$ and $HU$ is dense in $G$. Moreover, unimodularity of $G$ implies in this case that $T$ is compact.
Under these conditions, $H$ cannot admit a sequence of nontrivial elements $h_n$ such that $h_n^{g_n}$ converges to the identity. Thus $H$ must be cocompact in $G$.
\end{proof}

\begin{proof}[Second proof]
By Lemma~\ref{lem:HUGA} and Proposition~\ref{prop:discrete}, we may replace $G$ by the identity component $G^\circ$ and $H$ by $H \cap G^\circ$. We assume henceforth that $G$ is connected.

Since $H$ is abelian, hence amenable, so is $G$. Therefore $G$ is amenable as well, and thus solvable-by-compact. We argue by induction on the solvability degree of the solvable radical of the Lie algebra $\mathfrak g = \mathrm{Lie}(G)$. In the case where the solvable radical is trivial, the group $G$ is
compact and the result is obvious.

We now assume that the radical of $\mathfrak g$ is non-trivial and let $A$ be a maximal non-trivial connected abelian normal subgroup of $G$. By induction, the closure of the image of $H$ in $G/A$ is cocompact. In other words the group $G_1 = \overline{AH}$ is cocompact in $G$. Therefore it suffices to show that $H$ is cocompact in $G_1$. Notice that $A$ and $H$ are both abelian, so that $G_1$ is metabelian.
For the same reason as before we may assume that $G_1$ is connected.

The group $H$ is abelian, and acts on the connected abelian Lie group $A$ by conjugation. Moreover, upon dividing out a compact normal subgroup (which we can do by Lemma~\ref{lem:CompactKernel}), we may assume that $A$ is isomorphic to $\RR^d$, where $d = \dim(A)$. Therefore there is a subgroup $B \leq A$ isomorphic to $\RR$ or $\RR^2$ invariant under $H$. We may further assume that $B$ is a minimal non-trivial closed connected $H$-invariant subgroup, so that the $H$-action on $B$ is irreducible.

We can now use induction on $d = \dim(A)$ to deduce that $\overline{BH}$ is cocompact in $G_1/B$. As before this reduces the problem to showing that $H$ is cocompact in $G_2 = \overline{BH}$. If $B \cong \RR$, then $G_2$ is virtually abelian, hence it has (M) and we are done. If $B \cong \RR^2$, then the image of $H$ in $\Aut(B) = \GL_2(\RR)$ is conjugate to $O(2)$ by irreducibility. Thus $G_2$ is a closed subgroup of $\RR^2 \rtimes O(2)$. Moreover the cofinite group $H$ intersects trivially the group $B \cong \RR^2$. Since $H$ is dense in $G_2/B$, we infer that $B$ contains an element acting as an irrational rotation. Since $B$ is abelian, it follows that the whole group $B$ acts as a group of rotation on $\RR^2$. Therefore $B$ must be a compact subgroup of $G_2$, which contradicts that $H$ is cofinite.
\end{proof}

%
%

\begin{bibdiv}
\begin{biblist}
\bib{Abels}{article}{
   author={Abels, Herbert},
   title={Kompakt definierbare topologische Gruppen},
   language={German},
   journal={Math. Ann.},
   volume={197},
   date={1972},
   pages={221--233},
}

\bib{BCGM}{article}{
	author={Bader, Uri},
	author={Caprace, Pierre-Emmanuel},
	author={Gelander, Tsachik},
	author={Mozes, Shahar},
	title={Simple groups without lattices},
	journal={Bull. Lond. Math. Soc.},
	volume={44},
	date={2012},
	number={1},
	pages={55--67},
	issn={0024-6093},
	review={\MR{2881324}},
	doi={10.1112/blms/bdr061},
}

\bib{BHV}{book}{
	author={Bekka, Bachir},
	author={de la Harpe, Pierre},
	author={Valette, Alain},
	title={Kazhdan's property (T)},
	series={New Mathematical Monographs},
	volume={11},
	publisher={Cambridge University Press, Cambridge},
	date={2008},
	pages={xiv+472},
	isbn={978-0-521-88720-5},
	review={\MR{2415834}},
	doi={10.1017/CBO9780511542749},
}

\bib{BekkaLubo}{article}{
   author={Bekka, Bachir},
   author={Lubotzky, Alexander},
   title={Lattices with and lattices without spectral gap},
   journal={Groups Geom. Dyn.},
   volume={5},
   date={2011},
   number={2},
   pages={251--264},
 }

\bib{BenoistQuint}{article}{
   author={Benoist, Yves},
   author={Quint, Jean-Fran{\c{c}}ois},
   title={Lattices in $s$-adic Lie groups},
   journal={J. Lie Theory},
   volume={24},
   date={2014},
   number={1},
   pages={179--197},
}

\bib{BL}{book}{
   author={Benyamini, Yoav},
   author={Lindenstrauss, Joram},
   title={Geometric nonlinear functional analysis. Vol. 1},
   series={American Mathematical Society Colloquium Publications},
   volume={48},
   publisher={American Mathematical Society},
   place={Providence, RI},
   date={2000},
 }

 \bib{Borel}{book}{
   author={Borel, Armand},
   title={Linear algebraic groups},
   series={Graduate Texts in Mathematics},
   volume={126},
   edition={2},
   publisher={Springer-Verlag, New York},
   date={1991},
   pages={xii+288},
}

 \bib{Bourbaki}{book}{
   author={Bourbaki, N.},
   title={\'El\'ements de math\'ematique. Fasc. XXXVII. Groupes et
   alg\`ebres de Lie. Chapitre II: Alg\`ebres de Lie libres. Chapitre III:
   Groupes de Lie},
   note={Actualit\'es Scientifiques et Industrielles, No. 1349},
   publisher={Hermann, Paris},
   date={1972},
   pages={320},
}

\bib{Bourdon}{article}{
   author={Bourdon, Marc},
   title={Sur les immeubles fuchsiens et leur type de quasi-isom\'etrie},
   language={French, with English and French summaries},
   journal={Ergodic Theory Dynam. Systems},
   volume={20},
   date={2000},
   number={2},
   pages={343--364},
}

\bib{CaMo}{article}{
   author={Caprace, Pierre-Emmanuel},
   author={Monod, Nicolas},
   title={Decomposing locally compact groups into simple pieces},
   journal={Math. Proc. Cambridge Philos. Soc.},
   volume={150},
   date={2011},
   number={1},
   pages={97--128},
}

\bib{CaMo_amenis}{article}{
   author={Caprace, Pierre-Emmanuel},
   author={Monod, Nicolas},
   title={Fixed points and amenability in non-positive curvature},
   journal={Math. Ann.},
   volume={356},
   date={2013},
   number={4},
   pages={1303--1337},
}

\bib{CapStu}{article}{
   author={Caprace, Pierre-Emmanuel},
   author={Stulemeijer, Thierry},
   title={Totally disconnected locally compact groups with a linear open
   subgroup},
   journal={Int. Math. Res. Not. IMRN},
   date={2015},
   number={24},
   pages={13800--13829},
}

\bib{CorHab}{unpublished}{
		author={Cornulier, Yves},
		title={Aspects de la g\'eom\'etrie des groupes},
		note={M\'emoire d'habilitation \`a diriger des recherches, Universit\'e Paris-Sud 11},
		date={2014},
}

\bib{Guivarch}{article}{
   author={Guivarc'h, Yves},
   title={Croissance polynomiale et p\'eriodes des fonctions harmoniques},
   language={French},
   journal={Bull. Soc. Math. France},
   volume={101},
   date={1973},
   pages={333--379},
}

\bib{GuiRem}{article}{
   author={Guivarc'h, Yves},
   author={R{\'e}my, Bertrand},
   title={Group-theoretic compactification of Bruhat-Tits buildings},
   language={English, with English and French summaries},
   journal={Ann. Sci. \'Ecole Norm. Sup. (4)},
   volume={39},
   date={2006},
   number={6},
   pages={871--920},
}

\bib{HofmannNeeb}{article}{
   author={Hofmann, Karl Heinrich},
   author={Neeb, Karl-Hermann},
   title={The compact generation of closed subgroups of locally compact
   groups},
   journal={J. Group Theory},
   volume={12},
   date={2009},
   number={4},
   pages={555--559},
}

\bib{Ioana}{unpublished}{
 author={Ioana, Adrian},
 title={Strong ergodicity, property (T), and orbit equivalence rigidity for translation actions},
note={Preprint arXiv:1406:6628},
 date={2014},	
}

\bib{Kazhdan--Margulis68}{article}{
	author={Kazhdan, David},
	author={Margulis, Grigori},
	title={A proof of Selberg's hypothesis},
	journal={Mat. Sb. (N.S.)},
	volume={75(117)},
	date={1968},
	pages={163--168},
}

\bib{Macbeath-Swierczkowski59}{article}{
   author={Macbeath, A. M.},
   author={{\'S}wierczkowski, S.},
   title={On the set of generators of a subgroup},
   journal={Nederl. Akad. Wetensch. Proc. Ser. A 62 = Indag. Math.},
   volume={21},
   date={1959},
   pages={280--281},
  }

\bib{Margulis}{book}{
   author={Margulis, G. A.},
   title={Discrete subgroups of semisimple Lie groups},
   series={Ergebnisse der Mathematik und ihrer Grenzgebiete (3) [Results in
   Mathematics and Related Areas (3)]},
   volume={17},
   publisher={Springer-Verlag},
   place={Berlin},
   date={1991},
}


\bib{Mostow}{article}{
   author={Mostow, G. D.},
   title={Homogeneous spaces with finite invariant measure},
   journal={Ann. of Math. (2)},
   volume={75},
   date={1962},
   pages={17--37},
}

\bib{Pink}{article}{
   author={Pink, Richard},
   title={Compact subgroups of linear algebraic groups},
   journal={J. Algebra},
   volume={206},
   date={1998},
   number={2},
   pages={438--504},
}

\bib{Platonov}{article}{
   author={Platonov, V. P.},
   title={Locally projectively nilpotent radical in topological groups},
   language={Russian},
   journal={Dokl. Akad. Nauk BSSR},
   volume={9},
   date={1965},
   pages={573--577},
}

\bib{Raghu}{book}{
   author={Raghunathan, M. S.},
   title={Discrete subgroups of Lie groups},
   note={Ergebnisse der Mathematik und ihrer Grenzgebiete, Band 68},
   publisher={Springer-Verlag},
   place={New York},
   date={1972},
}

\bib{Robert}{article}{
   author={Robert, Alain},
   title={Exemples de groupes de Fell},
   language={French, with English summary},
   journal={C. R. Acad. Sci. Paris S\'er. A-B},
   volume={287},
   date={1978},
   number={8},
   pages={A603--A606},
}

\bib{Robert_book}{book}{
	author={Robert, Alain},
	title={Introduction to the representation theory of compact and locally
		compact groups},
	series={London Mathematical Society Lecture Note Series},
	volume={80},
	publisher={Cambridge University Press, Cambridge-New York},
	date={1983},
	pages={ix+205},
	isbn={0-521-28975-0},
	review={\MR{690955}},
}

\bib{Schmidt80}{article}{
   author={Schmidt, Klaus},
   title={Asymptotically invariant sequences and an action of ${\rm
   SL}(2,\,{\bf Z})$ on the $2$-sphere},
   journal={Israel J. Math.},
   volume={37},
   date={1980},
   number={3},
   pages={193--208},	
}

\bib{Schmidt81}{article}{
   author={Schmidt, Klaus},
   title={Amenability, Kazhdan's property $T$, strong ergodicity and
   invariant means for ergodic group-actions},
   journal={Ergodic Theory Dynamical Systems},
   volume={1},
   date={1981},
   number={2},
   pages={223--236},
}

%

\bib{TitsAlt}{article}{
   author={Tits, J.},
   title={Free subgroups in linear groups},
   journal={J. Algebra},
   volume={20},
   date={1972},
   pages={250--270},
}

\bib{Weil}{book}{
   author={Weil, Andr{\'e}},
   title={Basic number theory},
   series={Die Grundlehren der mathematischen Wissenschaften, Band 144},
   publisher={Springer-Verlag New York, Inc., New York},
   date={1967},
   pages={xviii+294},
}

\bib{Willis_nil}{article}{
   author={Willis, G.},
   title={Totally disconnected, nilpotent, locally compact groups},
   journal={Bull. Austral. Math. Soc.},
   volume={55},
   date={1997},
   number={1},
   pages={143--146},
}

\end{biblist}
\end{bibdiv}

Uri Bader uribader@gmail.com

Pierre-Emmanuel Caprace pierre-emmanuel.caprace@uclouvain.be

Tsachik Gelander tsachik.gelander@gmail.com

Shahar Mozes mozes@math.huji.ac.il

\end{document}